\Crefname{conjecture}{Conjecture}{Conjectures}
\newtheorem{thm}{Theorem}[section]
\newtheorem{cor}[thm]{Corollary}
\newtheorem{prop}[thm]{Proposition}
\newtheorem{lemma}[thm]{Lemma}
\newtheorem{example}[thm]{Example}
\newtheorem{conj}[thm]{Conjecture}
\newtheorem{remark}[thm]{Remark}
\newtheorem{defn}[thm]{Definition}
\DeclareMathOperator{\HS}{HS}
\begin{document}

\title[Difference Formula]{A Difference Formula for Tensor-power Multiplicities}

\author{Hongfei Shu}
\address{Institute for Astrophysics, School of Physics, Zhengzhou University, Zhengzhou, Henan 450001, China}
\email{shu@zzu.edu.cn}

\author{Peng Zhao}
\address{
International Joint Institute of Tianjin University, Fuzhou, 350207, China}
\email{pzhao@tjufz.org.cn}

\author{Rui-Dong Zhu}
\address{Institute for Advanced Study \& School of Physical Science and Technology, Soochow University, Suzhou 215006, China}
\address{Jiangsu Key Laboratory of Frontier Material Physics and Devices, Soochow University, Suzhou 215006, China}
\email{rdzhu@suda.edu.cn}

\author{Hao Zou}
\address{Center for Mathematics and Interdisciplinary Sciences, Fudan University, Shanghai 200433, China}
\address{Shanghai Institute for Mathematics and Interdisciplinary Sciences, Shanghai 200433, China}
\address{Department of Mathematics, The Chinese University of Hong Kong, Shatin, Hong Kong Special Administrative Region}
\email{haozou@fudan.edu.cn}

\subjclass[2020]{Primary 14M15, 14N35, 81T60; Secondary 05E05}
\keywords{ Representation theory, tensor product multiplicity}

\begin{abstract} 
A novel combinatorial formula is developed for for tensor product multiplicities in representation theory. We introduce a difference formula linking these multiplicities to restricted occupancy coefficients via a shifted operator. This method is extended to derive branching rules for subalgebras and is conjecturally applied to A-type Lie superalgebras.
\end{abstract}
\maketitle

\setcounter{tocdepth}{1}
\tableofcontents
\section{Introduction and main results}

Let $W$ be a finite-dimensional representation of a complex semisimple Lie algebra $\mathfrak{g}$. One of the basic and widely studied problems in representation theory is the decomposition of tensor powers $W^{\otimes L}$ into irreducible components,
\begin{equation}
W^{\otimes L} \:=\: \bigoplus_{\lambda} \mu_\lambda\, V_{\lambda} \,.
\end{equation}
Here $\lambda$ ranges over the highest weights of all irreducible representations of $\mathfrak{g}$. The multiplicity of irreducible representations of tensor products is a central object in the theory of irreducible representations. These typically appear in the form of the Littlewood–Richardson rule or the character method.
The goal of this paper is to establish a formula for $\mu_\lambda$, establishing a new relationship between combinatorics and representation theory. Our formula expresses the tensor product multiplicities as signed sums of restricted occupancy coefficients, and is reminiscent of Steinberg's formula involving signed sums of the Kostant partition function.

More specifically, we focus on the case $\mathfrak{g}=A_r$ in this paper and take the representation $W$ as $V_{(2s)}$, which is the spin-$s$ irreducible representation of $A_r$ labeled by the one-row Young diagram consisting of $2s$ boxes and is isomorphic to the $(2s)$-symmetric representation ${\rm Sym}^{2s}\left(\mathbb{C}^{r+1}\right)$. Here $s$ is an integer or a half-integer. From a computational perspective, the decomposition of the tensor product representation can be expressed using the Weyl character formula
\begin{equation}
\label{eq:schurexpan}
    \left[\chi_{(2s)}(x)\right]^L \:=\: \sum_{\lambda} \mu_{\lambda} \chi_{\lambda}(x)\,.
\end{equation}
The Weyl character for the irreducible representation labeled by $\lambda=(\lambda_1,\dots,\lambda_{r+1})$ can be expressed by the {\it Schur polynomial} of $x:=(x_1,\dots,x_{r+1})$
\begin{equation}
\label{eq:schurpoly}
   \chi_\lambda (x)\:=\: \frac{\sum_{\omega\in \mathcal{S}_{r+1}}{{\rm sgn}}(\omega)\prod_{i=1}^{r+1}x_{\omega(i)}^{\lambda_i+r+1-i}}{\sum_{\omega\in \mathcal{S}_{r+1}}{{\rm sgn}}(\omega)\prod_{i=1}^{r+1}x_{\omega(i)}^{r+1-i}}\,.
\end{equation}
For simplicity of notation, in the following context without causing any confusion, we would also like to write 
\[
\Delta(x) \:=\: \sum_{\omega\in \mathcal{S}_{r+1}}{{\rm sgn}}(\omega)\prod_{i=1}^{r+1}x_{\omega(i)}^{r+1-i}\:=\: \prod_{1 \leq i<j\leq r+1}(x_i-x_j)\,.
\]
Then, the left hand side of \eqref{eq:schurexpan} is the $L$-th power of $\chi_{(2s)}(x)$, that is the $(2s)$-th {\it Complete Homogeneous Symmetric Polynomial}, which again gives a homogeneous symmetric polynomial of degree $2sL$, while the right hand side is the summation of Schur polynomials $\chi_{\lambda}(x)$ with the multiplicity $\mu_{\lambda}$, over all possible Young diagrams labeling irreducible representations of $A_r$.

\begin{remark}\label{rmk:red}
In this paper, for the purpose of using the equation \eqref{eq:map}, we use $(r+1)$-dimensional highest weights determined by $(\lambda_1, \dots, \lambda_{r+1})$ for irreducible representations of $A_r$, which gives redundant Young diagrams with $(r+1)$ rows. 

The redundancy can be eliminated by using the relation for the standard basis $L_1+L_2 + \dots+ L_{r+1} = 0$ in $A_r$, which implies
$$(\lambda_{1}, \cdots, \lambda_{r+1}) \:\sim\: (\lambda_{1}-\lambda_{r+1}, \cdots, \lambda_{r}-\lambda_{r+1},0)\,.$$
Therefore, if $\lambda_{r+1}\neq 0$, one can reduce the $(r+1)$-row redundant Young diagrams to $r$-row Young diagrams by eliminating the first $\lambda_{r+1}$ columns.
\end{remark}

On the other hand, we consider another expansion of $\left[\chi_{(2s)}\right]^L$ as follows.
\begin{equation}
\label{eq:genexpan}
    \left[\chi_{(2s)}(x)\right]^L \:=\: \sum_{\vec{M}} c_{s,L}(\vec{M})\, x_{1}^{2sL-M_1}x_{2}^{M_1-M_2}\cdots x_{r}^{M_{r-1}-M_r}x_{r+1}^{M_{r}}\,,
\end{equation}
with $\vec{M}:=(M_1,M_2,\dots,M_r)$ and $0\leq M_r \leq \dots \leq M_1 \leq 2sL$. The motivation for this expansion comes from its relation to the enumeration of spin-chain states as discussed in \cite{Shu:2022vpk,Shu:2024crv}, where $\left[\chi_{(2s)}(x)\right]^L$ was interpreted as the generating function for a 3d restricted occupancy problem \footnote{The authors would like to thank Arkadij Bojko for pointing out to us that this is also related to the colored instanton counting problem.}, as summarized in Appendix~\ref{app:nrop}. The coefficients $c_{s,L}(\vec{M})$ also give the number of Physical Bethe states in the XXX spin chain with the most generic twist boundary condition (see \cite{Shu:2022vpk,Shu:2024crv} for more detailed discussions).

As motivated in \cite{Shu:2022vpk,Shu:2024crv}, we would like to establish the dictionary between $\lambda$ and $\vec{M}$ by
\begin{equation}
\label{eq:map}
    \lambda_i \:=\: M_{i-1} - M_i,\quad \text{for } i=1,\dots,r+1\,,
\end{equation}
and we have adopted the convention that $M_0 = 2sL$ and $M_{r+1}=0$. 
\begin{remark}\label{rmk:order}
One may note that the standard Young diagram $\lambda$ by definition requires $\lambda_1 \geq \lambda_2 \geq \cdots \geq \lambda_{r+1} \geq 0$, while the relation among $M_i$'s, $2sL\geq M_1 \geq \cdots \geq M_{1} \geq 0$, does not imply $M_{i}-M_{i+1} \leq M_{i-1} -M_{i} $, which seems to contradict with \eqref{eq:map}. However, we shall mention that we will only need to look at ranges of $M_i$'s such that $M_{i}-M_{i+1} \leq M_{i-1} -M_{i}$, namely $(2sL-M_1,M_1-M_2,\dots,M_r-M_{r+1},M_r)$ becomes a standard partition, and for $M_i$ beyond these ranges will be disregarded due to the symmetry property, which ensures that they will automatically self-organized.
\end{remark}
The first result of this paper provides a formula that relates the coefficients $\mu_{\lambda}$ and $c_{s,L}(\vec{M})$, which we call the {\it difference formula},
\begin{thm}\label{thm:main}
The multiplicity $\mu_\lambda$ of the occurrence of the irreducible representation $V_\lambda$ of the Lie algebra $A_r$ in the tensor power $V_{(2s)}^{\otimes L}$ is given by the formula
\begin{equation}
\mu_\lambda  \:=\: \mathcal{D}_{R_{A_r}(t)}  c_{s,L}(\vec{M}) \,,
\label{eq:thm}
\end{equation}
where $\mathcal{D}_{R_{A_r}(t)}$ is the shifted operator associated with the Weyl denominator of $A_r$, $R_{A_r}(t)$, as defined in \eqref{eq:weyldenom}, and the relation of $\lambda$ and $\vec{M}$ is given by \eqref{eq:map}.
\end{thm}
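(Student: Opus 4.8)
The plan is to extract $\mu_\lambda$ by comparing the two expansions \eqref{eq:schurexpan} and \eqref{eq:genexpan} of $[\chi_{(2s)}(x)]^L$ after clearing the Weyl denominator. By the Weyl character formula \eqref{eq:schurpoly}, multiplying $\chi_\lambda(x)$ by $\Delta(x)$ produces the alternating polynomial attached to the strictly decreasing vector $\lambda+\rho$, with $\rho=(r,r-1,\dots,1,0)$, namely $\Delta(x)\chi_\lambda(x)=\sum_{\omega\in\mathcal{S}_{r+1}}{\rm sgn}(\omega)\prod_{i=1}^{r+1}x_{\omega(i)}^{\lambda_i+r+1-i}$. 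Multiplying \eqref{eq:schurexpan} through by $\Delta(x)$ thus turns the right-hand side into a signed sum of monomials whose coefficients we can match against the corresponding expansion of $\Delta(x)[\chi_{(2s)}(x)]^L$ obtained from \eqref{eq:genexpan}.

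First I would isolate a single multiplicity. Each alternating polynomial above is supported on the $\mathcal{S}_{r+1}$-orbit of the exponent vector $\lambda+\rho$; for a dominant weight the only strictly decreasing arrangement in that orbit is $\lambda+\rho$ itself, carried with coefficient $+1$, and distinct dominant weights generate disjoint orbits. Consequently, reading off the coefficient of the single monomial $x_1^{\lambda_1+r}x_2^{\lambda_2+r-1}\cdots x_{r+1}^{\lambda_{r+1}}$ on the left returns precisely $\mu_\lambda$. This is exactly where the ordering caveat of Remark \ref{rmk:order} enters: only the ranges of $\vec{M}$ for which $\lambda+\rho$ is strictly decreasing supply a clean leading term, while the remaining values reorganize through the antisymmetry of $\Delta(x)$ and may be disregarded.

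Next I would compute the same coefficient from the occupancy side. Writing $\Delta(x)=\sum_{\tau\in\mathcal{S}_{r+1}}{\rm sgn}(\tau)\prod_{k}x_{k}^{\,r+1-\tau(k)}$ and recalling from \eqref{eq:map} that the monomial indexed by $\vec{M}$ carries exponent $M_{k-1}-M_k$ in $x_k$ (with $M_0=2sL$ and $M_{r+1}=0$), the product contributes the target monomial $x^{\lambda+\rho}$ exactly when $M_{k-1}-M_k=\lambda_k+\tau(k)-k$ for all $k$. Combining with the previous step gives
\begin{equation}
\mu_\lambda \;=\; \sum_{\tau\in\mathcal{S}_{r+1}}{\rm sgn}(\tau)\,c_{s,L}\!\left(\vec{M}(\tau)\right),
\end{equation}
where $\vec{M}(\tau)$ denotes the translate of $\vec{M}$ realizing the exponent shift $\lambda_k\mapsto\lambda_k+\tau(k)-k$. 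The final step is to recognize this Weyl-group signed sum as the action of the shifted operator $\mathcal{D}_{R_{A_r}(t)}$: expanding the Weyl denominator $R_{A_r}(t)$ of \eqref{eq:weyldenom} into monomials and converting each power of the formal variable into the corresponding translation of the argument $\vec{M}$ reproduces the sum above sign for sign and shift for shift, which is the assertion \eqref{eq:thm}.

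I expect the principal obstacle to be the bookkeeping that identifies the permutation sum with the operator $\mathcal{D}_{R_{A_r}(t)}$ exactly as defined in \eqref{eq:weyldenom}: one must check that the monomial-to-shift dictionary reproduces both the signs ${\rm sgn}(\tau)$ and the translations $\tau(k)-k$ correctly, including the boundary conventions $M_0=2sL$ and $M_{r+1}=0$. A secondary point of care is justifying that the coefficient extraction is genuinely unaffected by non-dominant contributions, so that the symmetry argument of Remark \ref{rmk:order} is applied rigorously rather than heuristically.
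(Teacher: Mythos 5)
Your proposal is correct and follows essentially the same route as the paper: extract the coefficient of $x_1^{\lambda_1+r}\cdots x_{r+1}^{\lambda_{r+1}}$ from $\Delta(x)\cdot\left[\chi_{(2s)}(x)\right]^L$, pair the Vandermonde expansion against the monomial expansion to obtain the signed sum $\sum_{\tau\in\mathcal{S}_{r+1}}\mathrm{sgn}(\tau)\,c_{s,L}\bigl(M_j-\sum_{i=1}^{j}(\tau(i)-i)\bigr)$ (the paper's Theorem~\ref{thm:main1}), and then identify this sum with the operator $\mathcal{D}_{R_{A_r}(t)}$ via the Weyl denominator identity (Lemma~\ref{lem:arweylid} and Proposition~\ref{prop:poly}). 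The only cosmetic difference is that you pair $\Delta(x)$ directly with the expansion \eqref{eq:genexpan}, whereas the paper first passes through monomial symmetric polynomials and Lemma~\ref{lem:equal} to replace $d_{s,L}(\lambda)$ by $c_{s,L}(\vec{M})$.
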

This theorem follows immediately after the proof of Theorem \ref{thm:main1} and the introduction of the shifted operator in Section~\ref{sec:proof}.

Furthermore, motivated by the twisted boundary conditions of the XXX spin chain considered in \cite{Shu:2024crv}, the global symmetry Lie algebra $\mathfrak{g}$ will break down to its subalgebra $\mathfrak{h}$, leading to the consideration of the decomposition of the tensor product into its restricted representations of $\mathfrak{h}$:
\begin{equation}
\label{eq:branching}
\left(V^{\mathfrak{g}}_{(2s)}\right)^{\otimes L} \:=\: \bigoplus_{\lambda} \mu^{\mathfrak{h}}_\lambda V^{\mathfrak{h}}_{\lambda} \,.
\end{equation}
In the general case, the Lie subalgebra $\mathfrak{h}$ takes the form of a direct sum
\begin{equation}
    \mathfrak{h} \:=\: \mathfrak{g}_1 \oplus \cdots \oplus \mathfrak{g}_k\,,
\end{equation}
therefore, the summation in Equation~\eqref{eq:branching} is over the irreducible representations for this direct sum Lie algebra. In other words, the $\lambda$ in Equation~\eqref{eq:branching} should be expressed as $k$-components and the $i$-th component labels an irreducible representation of $\mathfrak{g}_i \subset \mathfrak{g}$. 

The coefficients $\mu^{\mathfrak{h}}_\lambda$ are the decomposition multiplicities counting the occurrence of restricted representations, and we have used a superscript $\mathfrak{h}$ to address the relevant subalgebra. From this viewpoint, the expansion \eqref{eq:genexpan} can be understood as a special case when $\mathfrak{h} = \mathfrak{gl}(1)^{r}$ and the coefficients $\mu^{\mathfrak{gl}(1)^{r}}_{\lambda} = c_{s,L}(\vec{M})$, while the expansion \eqref{eq:schurexpan} is another extremal case when $\mathfrak{h} = \mathfrak{g}$ and $\mu^{\mathfrak{g}}_{\lambda} = \mu_{\lambda}$.

The second result of this paper is to generalize Theorem~\ref{thm:main} and provides an explicit formula relating the coefficients $c_{s,L}(\vec{M})$ and $\mu^{\mathfrak{h}}_\lambda$, which will be proven in Section~\ref{sec:branch}:
\begin{thm}
\label{thm:branching}
Consider the restriction of the tensor production representation of $\mathfrak{g}=A_r$ onto its subalgebra $\mathfrak{h}=\mathfrak{g}_1 \oplus \cdots \oplus \mathfrak{g}_k$, the multiplicity $\mu^{\mathfrak{h}}_\lambda$ can be obtained by
\begin{equation}
\label{eq:branchingmulti}
    \mu^{\mathfrak{h}}_\lambda \:=\: \mathcal{D}_{R_{\mathfrak{h}}(t)} c_{s,L}(\vec{M})\, ,
\end{equation}
where $\mathcal{D}_{R_{\mathfrak{h}}(t)}$ is the shift operator associated to the Weyl denominator of subalgebra $\mathfrak{h}$, which is the product of Weyl denominator of each $g_i$ for $i=1,\dots,k$. Here, the relation between $\lambda$ and $\vec{M}$ is again given by Equation~\eqref{eq:map}.
\end{thm}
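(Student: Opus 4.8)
The plan is to run the same argument that establishes the difference formula for $A_r$ (Theorem~\ref{thm:main}, proved via Theorem~\ref{thm:main1}), with the full algebra replaced everywhere by the reductive subalgebra $\mathfrak{h} = \mathfrak{g}_1 \oplus \cdots \oplus \mathfrak{g}_k$. The essential starting observation is that $\mathfrak{h}$ is a regular (full-rank) subalgebra, so its Cartan subalgebra coincides with that of $A_r$ and the variables $x = (x_1,\dots,x_{r+1})$ are shared. Consequently the Laurent polynomial $[\chi_{(2s)}(x)]^L$ admits the very same monomial expansion \eqref{eq:genexpan}, with the same weight multiplicities $c_{s,L}(\vec M)$, independently of the subalgebra one decomposes with respect to; only its reorganization into irreducible characters changes. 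For $\mathfrak{h}$ I would first record the three structural facts that make the argument go through: the Weyl group factorizes as $W_{\mathfrak{h}} = W_{\mathfrak{g}_1}\times\cdots\times W_{\mathfrak{g}_k}$, the Weyl vector as $\rho_{\mathfrak{h}} = \rho_{\mathfrak{g}_1}+\cdots+\rho_{\mathfrak{g}_k}$, and the Weyl denominator as the product $R_{\mathfrak{h}} = \prod_{i=1}^{k} R_{\mathfrak{g}_i}$, with any central $\mathfrak{gl}(1)$ directions contributing trivially.

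The central identity I would then invoke is the Weyl character formula for $\mathfrak{h}$, namely $\chi^{\mathfrak{h}}_\lambda \cdot R_{\mathfrak{h}} = \sum_{w \in W_{\mathfrak{h}}} \mathrm{sgn}(w)\, x^{w(\lambda + \rho_{\mathfrak{h}})}$, which holds factor by factor over the simple summands $\mathfrak{g}_i$. Multiplying the $\mathfrak{h}$-decomposition $[\chi_{(2s)}]^L = \sum_\lambda \mu^{\mathfrak{h}}_\lambda \chi^{\mathfrak{h}}_\lambda$ from \eqref{eq:branching} by $R_{\mathfrak{h}}$ turns the right-hand side into $\sum_\lambda \mu^{\mathfrak{h}}_\lambda \sum_{w\in W_{\mathfrak{h}}} \mathrm{sgn}(w)\, x^{w(\lambda+\rho_{\mathfrak{h}})}$, while on the left $R_{\mathfrak{h}} = \sum_{w\in W_{\mathfrak{h}}} \mathrm{sgn}(w)\, x^{w\rho_{\mathfrak{h}}}$ multiplies the monomial expansion \eqref{eq:genexpan} into a signed convolution of the $c_{s,L}(\vec M)$. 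The extraction step is to read off the coefficient of the strictly $\mathfrak{h}$-dominant monomial $x^{\lambda+\rho_{\mathfrak{h}}}$ on both sides. On the right only the term $w=e$, $\lambda'=\lambda$ survives, since strict dominance makes the orbit representative unique, leaving exactly $\mu^{\mathfrak{h}}_\lambda$. On the left the same coefficient is a signed sum $\sum_{w\in W_{\mathfrak{h}}} \mathrm{sgn}(w)\, c_{s,L}(\vec M_w)$, where $\vec M_w$ is the argument whose weight equals $\lambda+\rho_{\mathfrak{h}}-w\rho_{\mathfrak{h}}$, re-expressed in $\vec M$-coordinates through \eqref{eq:map}. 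By the definition of the shifted operator this is precisely $\mathcal{D}_{R_{\mathfrak{h}}(t)} c_{s,L}(\vec M)$, and because $R_{\mathfrak{h}} = \prod_i R_{\mathfrak{g}_i}$ the associated operator factorizes as the composition of the individual operators $\mathcal{D}_{R_{\mathfrak{g}_i}(t)}$, matching \eqref{eq:branchingmulti}. Taking $\mathfrak{h} = A_r$ recovers Theorem~\ref{thm:main} verbatim, confirming this is a genuine generalization.

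The main obstacle I anticipate is the coefficient-extraction and linear-independence step when $\mathfrak{h}$ is reductive rather than semisimple. I would need to verify that $\lambda + \rho_{\mathfrak{h}}$ is strictly dominant for the semisimple factors, while the central $\mathfrak{gl}(1)$ coordinates — on which $W_{\mathfrak{h}}$ acts trivially and for which $\rho_{\mathfrak{h}}$ has no component — are matched exactly, so that the alternating sums $A^{\mathfrak{h}}_{\lambda+\rho_{\mathfrak{h}}}$ remain linearly independent and the target monomial is hit only once. A secondary bookkeeping point, tied to Remarks~\ref{rmk:red} and~\ref{rmk:order}, is to confirm that the shifts $\rho_{\mathfrak{h}}-w\rho_{\mathfrak{h}}$, which are integer combinations of the roots of $\mathfrak{h}\subset A_r$, translate into admissible shifts of $\vec M$ under \eqref{eq:map}, so that each $c_{s,L}(\vec M_w)$ appearing is evaluated within — or consistently extended by the symmetry property from — the declared range $0\le M_r\le\cdots\le M_1\le 2sL$.
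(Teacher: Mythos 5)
Your proposal is correct and follows essentially the same route as the paper: multiplying the $\mathfrak{h}$-decomposition by the Weyl denominator of $\mathfrak{h}$ (which the paper writes as the product of Vandermonde determinants $\Delta^{(i)}(x)$ over the simple summands, acting trivially on the $\mathfrak{gl}(1)$ coordinates), extracting the coefficient of the $\rho_{\mathfrak{h}}$-shifted dominant monomial, and recognizing the resulting alternating sum over $W(\mathfrak{h})=\mathcal{S}_{r_1+1}\times\cdots\times\mathcal{S}_{r_k+1}$ as the action of $\mathcal{D}_{R_{\mathfrak{h}}(t)}$ via the factorization property of Lemma~\ref{lem:factor}. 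The points you flag as potential obstacles (strict dominance on the semisimple factors, exact matching of the abelian coordinates, and admissibility of the shifted arguments of $c_{s,L}$) are handled in the paper exactly as you anticipate.
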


We further extend the above formulas to the $A$-type Lie superalgebra $\mathfrak{sl}(m|n)$ and consider the tensor power decomposition,
\[
    V_{(2s)}^{\otimes L} \:=\: \oplus_{\lambda}\, \mu_{\lambda}\, V_{\lambda}\,,
\]
where $V_\lambda$ are the covariant tensor representations $V_{\lambda}$ of $\mathfrak{sl}(m|n)$ with $\lambda$ being $(m,n)$-hook Young diagrams, namely constrained in the $(m,n)$-hook as shown in Figure~\ref{fig:hook}. Characters of $V_{\lambda}$ can be represented by hook-Schur polynomials $\HS_{\lambda}(x;y)$ \cite{Berele:1987yi,VanderJeugt:1989ak}. Then, as before, we consider the following two expansions of $L$-th power of $V_{(2s)}$, written in terms of hook-Schur polynomials,
\begin{equation}
    \begin{aligned}
        \left[\HS_{(2s)}(x;y)\right]^L &\:=\: \sum_{\lambda} \mu_{\lambda} \, \HS_{\lambda}(x;y)\,, \\
        &\:=\: \sum_{\vec{M}} c_{s,L}(\vec{M})\ \prod_{i=1}^m x_i^{M_{i-1}-M_i} \prod_{\ell=1}^n y_\ell^{M_{m+\ell-1}-M_{m+\ell}}\,.
    \end{aligned}
\end{equation}
For simplicity, we have used the same notation for the expansion coefficients as the ordinary Lie algebras, but the intended meaning should be clear from the context.

The dictionary between $\lambda$ and $\vec{M}$ should be modified because now the Young diagrams should be constrained inside the $(m,n)$-hook. It turns out that we need to separate the $(m,n)$-hook Young diagram $\lambda$ into two parts: the first $m$ rows of $\lambda$, which is a smaller standard Young diagram $(\lambda_1,\dots,\lambda_m)$; and the remaining part $\tilde{\lambda} = \lambda / (\lambda_1,\dots,\lambda_m)$, which consists of $n$ columns. Then, the generalization of \eqref{eq:map} to the case of $\mathfrak{sl}(m|n)$ is given by
\begin{equation}\label{eq:mapsuper}
    \lambda_i \:=\: M_{i-1} - M_i,\quad  {\tilde{\lambda}^{\prime}}_\ell \:=\: M_{\ell+m-1} - M_{\ell+m}\,,
\end{equation}
for $i=1,\dots,m$ and $\ell=1,\dots,n$ with the convention $M_0 = 2sL$ and $M_{m+n} =0$, and $\tilde{\lambda}^\prime$ is the conjugate Young diagram of $\tilde{\lambda}$ defined above. Note that Remark~\ref{rmk:order} also applies to \eqref{eq:mapsuper}. Then, under the relation between $\lambda$ and $M_i$'s \eqref{eq:mapsuper}, we conjecture that the coefficients $\mu_{\lambda}$ and $c_{s,L}(\vec{M})$ are related by the following formula:
\begin{conj} 
\label{conj:super}
    \[
    \mu_{\lambda} \:=\: \mathcal{D}_{R_{\mathfrak{sl}(m|n)}(t)} c_{s,L}(\vec{M})\,,
    \]
    where $\mathcal{D}_{R_{\mathfrak{sl}(m|n)}(t)}$ is the shifted operator associated with $R_{\mathfrak{sl}(m|n)}(t)$ defined in \eqref{eq:superdeno}.
\end{conj}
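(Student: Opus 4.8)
The plan is to imitate the derivations of \Cref{thm:main} and \Cref{thm:branching}, replacing the ordinary Weyl character formula by a bialternant (super Weyl character) formula for the hook-Schur polynomials. First I would record the Sergeev--Pragacz type formula expressing $\HS_\lambda(x;y)$ as a quotient $N_\lambda(x;y)/R_{\mathfrak{sl}(m|n)}(x;y)$, where $R_{\mathfrak{sl}(m|n)}$ is the super Weyl denominator of \eqref{eq:superdeno} and $N_\lambda$ is an alternating numerator whose $x$- and $y$-exponents are the dictionary data \eqref{eq:mapsuper} shifted by the super Weyl vector $\rho$. The decisive structural feature of the super case is that the odd positive roots $\epsilon_i-\delta_\ell$ contribute factors $\prod_{i,\ell}(x_i+y_\ell)$ (up to the sign conventions of \eqref{eq:superdeno}) in the \emph{denominator} of $R_{\mathfrak{sl}(m|n)}$, whereas the even roots contribute $\prod_{i<j}(x_i-x_j)\prod_{k<l}(y_k-y_l)$ in the numerator; the relevant Weyl group is only $\mathcal{S}_m\times\mathcal{S}_n$, acting on the $x$- and $y$-blocks separately.

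Next I would clear denominators in both expansions of $[\HS_{(2s)}(x;y)]^L$ by multiplying through by $R_{\mathfrak{sl}(m|n)}(x;y)$. On the hook-Schur side this produces the alternating sum $\sum_\lambda \mu_\lambda\, N_\lambda(x;y)$, and on the monomial side it produces the product of the generating function $\sum_{\vec M} c_{s,L}(\vec M)\prod_i x_i^{M_{i-1}-M_i}\prod_\ell y_\ell^{M_{m+\ell-1}-M_{m+\ell}}$ with the expansion of $R_{\mathfrak{sl}(m|n)}$. I would then extract the coefficient of the \emph{super-regular} monomial attached to $\lambda$, namely the one whose $x$-exponents and $y$-exponents are the $\rho$-shifts prescribed by \eqref{eq:mapsuper}. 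Because those exponents are strictly decreasing inside each of the two blocks, only the identity of $\mathcal{S}_m\times\mathcal{S}_n$ survives on the hook-Schur side, and its coefficient is exactly $\mu_\lambda$. Matching with the monomial side reorganizes the same coefficient as a signed, shifted sum of the $c_{s,L}(\vec M)$, which is by definition the action of $\mathcal{D}_{R_{\mathfrak{sl}(m|n)}(t)}$, each formal variable $t_a$ implementing the index shift of $\vec M$ dictated by \eqref{eq:mapsuper}.

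The main obstacle, and the reason I expect this to remain conjectural rather than a theorem, lies in the odd-root factors. In contrast with the purely even cases of \Cref{thm:main} and \Cref{thm:branching}, $R_{\mathfrak{sl}(m|n)}$ is a genuinely rational --- not polynomial --- function of the root variables, so the factors $(x_i+y_\ell)^{-1}$ must be expanded as geometric series $x_i^{-1}\sum_{a\ge 0}(-y_\ell/x_i)^a$ before $\mathcal{D}_{R_{\mathfrak{sl}(m|n)}(t)}$ can even be defined as a shift operator. One must therefore show that this a priori infinite family of shifts collapses to a finite signed sum: concretely, that $c_{s,L}(\vec M)$ vanishes outside the bounded window $0\le M_{m+n}\le\cdots\le M_1\le 2sL$ and that the $(m,n)$-hook constraint truncates the admissible $y$-degrees at $n$, so that only finitely many monomials contribute and the surviving sum reproduces the bialternant numerator $N_\lambda$. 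The delicate bookkeeping is that the $y$-part of \eqref{eq:mapsuper} is phrased through the \emph{conjugate} diagram $\tilde\lambda'$, so the cancellation of the spurious negative-degree and out-of-hook terms against the odd-root expansion has to be matched box-by-box with the hook geometry of \Cref{fig:hook}. Controlling these cancellations uniformly in $\lambda$ is precisely the step that does not reduce to the already-proven even-type arguments; an alternative route through the Berele--Regev decomposition $\HS_\lambda=\sum_{\mu}s_\mu(x)\,s_{\lambda'/\mu'}(y)$ would reduce each summand to two ordinary $\mathfrak{gl}$ computations, but the entanglement of the $x$- and $y$-variables through the hook and the non-factorizability of $[\HS_{(2s)}]^L$ reintroduce the same difficulty, which is where a complete proof would have to be finished.
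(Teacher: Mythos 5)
First, a point of calibration: the paper itself does not prove this statement. Conjecture~\ref{conj:super} is explicitly left open, and what Section~\ref{sec:super} supplies is a complete argument only for $\mathfrak{sl}(1|1)$ together with numerical verification for $\mathfrak{sl}(2|1)$ and $\mathfrak{sl}(1|2)$ at $L=6$, $s=1/2$. Judged as a strategy, your proposal correctly reproduces the even-type template of \Cref{thm:main} and \Cref{thm:branching} and honestly flags that it does not close; moreover, the obstacle you do name is real: $R_{\mathfrak{sl}(m|n)}(t)$ in \eqref{eq:superdeno} is genuinely rational, the associated shift operator is an infinite signed series, and one must show that its action on $c_{s,L}(\vec M)$ truncates. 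This is exactly what the paper's low-rank checks do by hand, e.g.\ $R_{\mathfrak{sl}(1|1)}(t)=\sum_{i\ge 0}(-1)^i t^i$ acting on $\binom{L}{M}$, and the series rearrangement \eqref{eq:sl21factor} for $\mathfrak{sl}(2|1)$.

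However, your route has a gap that you do not name, and it sits upstream of the truncation issue. The formula you take as a starting point --- $\HS_\lambda = N_\lambda / R_{\mathfrak{sl}(m|n)}$ with $N_\lambda$ a signed sum over $\mathcal{S}_m\times\mathcal{S}_n$ of \emph{monomials} whose exponents are a $\rho$-shift of the data \eqref{eq:mapsuper} --- is not the Sergeev--Pragacz formula; it is the Weyl--Kac character formula, which is valid only for \emph{typical} highest weights. Covariant tensor representations in the $(m,n)$-hook are in general atypical, and such components occur in the very decompositions at stake: for $\mathfrak{sl}(2|1)$ one has $\HS_{(6)}(x_1,x_2;y)=S_{(6)}(x_1,x_2)+y\,S_{(5)}(x_1,x_2)$, of dimension $13$, not divisible by $2^{mn}=4$ as the dimension of any typical (Kac) module must be, yet $\lambda=(6)$ appears with $\mu_\lambda=1$ in the paper's own $L=6$ table. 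For such $\lambda$ the quotient formula fails outright. The genuine Sergeev--Pragacz formula, which does hold for all hook $\lambda$, has only the even Vandermondes $\Delta(x)\Delta(y)$ in its denominator, while its numerator contains a $\lambda$-dependent partial product of odd factors $\prod(x_i+y_\ell)$ over the boxes of $\lambda$ inside the $m\times n$ rectangle; hence $N_\lambda$ is not a signed sum of single monomials, the numerators of distinct $\lambda$ overlap in monomial support, and your key extraction step (``only the identity of $\mathcal{S}_m\times\mathcal{S}_n$ survives, with coefficient exactly $\mu_\lambda$'') does not go through: the linear system relating $\mu_\lambda$ to $c_{s,L}(\vec M)$ is triangular at best, not diagonal. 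Any complete proof must confront this atypicality --- via Kac--Wakimoto/Brundan-type character formulas or a genuinely combinatorial inversion --- and that, rather than the infinite-series bookkeeping, is the real reason the statement remains a conjecture.
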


Moreover, let us consider the decomposition of $[V_{(2s)}]^L$ over $V_{\lambda}^{\mathfrak{h}}$ of a subalgebra $\mathfrak{h}$ of $\mathfrak{sl}(m|n)$, and suppose that $\mathfrak{h}$ is determined by a subset of positive roots of $\mathfrak{sl}(m|n)$, $\Psi^{+}$. Then, the expansion coefficients $\mu_{\lambda}^{\mathfrak{h}(\Psi^+)}$ in the decomposition
\begin{equation}\label{eq:superbranching}
        V_{(2s)}^{\otimes L} \:=\: \oplus_{\lambda}\, \mu_{\lambda}^{\mathfrak{h}(\Psi^+)} \, V_{\lambda}^{\mathfrak{h}}
\end{equation}
can also be computed from $c_{s,L}(\vec{M})$ by
\begin{conj} \label{conj:superbranching}
    \[
        \mu_\lambda^{\mathfrak{h}(\Psi^+)} \:=\: \mathcal{D}_{R_{\mathfrak{h}(\Psi^+)}(t)} c_{s,L}(\vec{M})\,,
    \]
    where $R_{\mathfrak{h}(\Psi^+)}(t)$ is given by \eqref{eq:superdenosub}.
    \end{conj}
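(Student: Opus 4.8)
The plan is to run the arguments behind \Cref{thm:branching} and \Cref{conj:super} simultaneously, passing to the super setting while restricting to the positive root subsystem $\Psi^+$ that cuts out $\mathfrak{h}$. Everything rests on one mechanism already used in the ordinary case: a multiplicity is the coefficient of a distinguished monomial once the generating function $[\HS_{(2s)}(x;y)]^L$ has been multiplied by the relevant Weyl denominator, and the operation ``multiply by the denominator, then read off that coefficient'' is precisely what the shifted operator $\mathcal{D}_{R_{\mathfrak{h}(\Psi^+)}(t)}$ encodes on the side of the occupancy coefficients $c_{s,L}(\vec M)$.

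First I would fix a super-Weyl form of the restricted characters, writing $\HS_\lambda^{\mathfrak{h}}(x;y) = A_\lambda^{\mathfrak{h}}(x;y)/R_{\mathfrak{h}(\Psi^+)}(x;y)$, where $R_{\mathfrak{h}(\Psi^+)}$ is built from the even factors $(1-e^{-\beta})$ and the odd factors $(1+e^{-\beta})^{\pm1}$ over $\beta\in\Psi^+$ as in \eqref{eq:superdenosub}, and $A_\lambda^{\mathfrak{h}}$ is the corresponding super-alternant numerator. Clearing denominators in \eqref{eq:superbranching} gives
\[
R_{\mathfrak{h}(\Psi^+)}(x;y)\,[\HS_{(2s)}(x;y)]^L \:=\: \sum_\lambda \mu_\lambda^{\mathfrak{h}(\Psi^+)}\, A_\lambda^{\mathfrak{h}}(x;y)\,.
\]
Then I would expand both sides as Laurent expansions in $(x;y)$. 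On the left, inserting the monomial expansion of $[\HS_{(2s)}]^L$ and the monomial expansion of $R_{\mathfrak{h}(\Psi^+)}$ turns the product into a signed, shifted sum of the $c_{s,L}(\vec M)$, which by construction is $\mathcal{D}_{R_{\mathfrak{h}(\Psi^+)}(t)}c_{s,L}(\vec M)$. On the right, one checks that the lex-leading monomial of $A_\lambda^{\mathfrak{h}}$ is exactly the one assigned to $\lambda$ by the dictionary \eqref{eq:mapsuper}, including the conjugation on the $y$-block; extracting that coefficient isolates $\mu_\lambda^{\mathfrak{h}(\Psi^+)}$, and matching the two expansions coefficient-by-coefficient yields the identity. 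As consistency checks to perform first, the specialisation $n=0$ must reduce to \Cref{thm:branching} and the choice $\Psi^+=\Delta^+$ must reduce to \Cref{conj:super}; these also pin down the sign conventions in \eqref{eq:superdenosub}.

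The hard part will be the odd roots together with atypicality, and this is exactly why the statement is only conjectural. Each odd root $\epsilon_i-\delta_k$ contributes a factor $(x_i+y_k)$ rather than a difference, so $R_{\mathfrak{h}(\Psi^+)}$ is a genuine ratio and its expansion into shift monomials is a priori infinite; one must show that the degree bound $2sL$ and the $(m,n)$-hook constraint truncate it to the finite operator $\mathcal{D}_{R_{\mathfrak{h}(\Psi^+)}(t)}$. More seriously, the covariant constituents of $V_{(2s)}^{\otimes L}$ are generically atypical, so the naive super-Weyl numerators $A_\lambda^{\mathfrak{h}}$ fail to be linearly independent across all $\lambda$ and the true characters require modification rules at the hook boundary; the crux is to prove that the signed cancellations produced by $\mathcal{D}_{R_{\mathfrak{h}(\Psi^+)}(t)}$ reproduce these modification rules exactly, equivalently that the contributions of the non-hook $\lambda$ cancel in the signed sum. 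I would attack this by first proving the formula under a typicality hypothesis, where the super-Weyl character formula is honest and the expansion argument above applies verbatim, and then extending to the atypical and boundary cases by a deformation/specialisation argument degenerating $\mathfrak{sl}(m|n)$ towards the purely even $\mathfrak{gl}(m|0)$ governed by \Cref{thm:branching}, supplemented by explicit low-rank verifications to confirm that the difference operator implements the modification rule.
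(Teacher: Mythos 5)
You are attempting to prove a statement that the paper itself leaves open: Conjecture~\ref{conj:superbranching} is not proved in the paper, whose own evidence consists of (i) a proof in the single special case $\mathfrak{sl}(m)\subset\mathfrak{sl}(m|1)$ and (ii) numerical checks at $L=6$, $s=1/2$ for the three subalgebras $\mathfrak{sl}(2)$, $\mathfrak{sl}(1|1)_1$, $\mathfrak{sl}(1|1)_2$ of $\mathfrak{sl}(2|1)$. Your proposal is likewise not a proof but a program, and its opening move is exactly the step that fails in general: you posit a Weyl-quotient presentation $\HS_\lambda^{\mathfrak{h}}(x;y)=A_\lambda^{\mathfrak{h}}(x;y)/R_{\mathfrak{h}(\Psi^+)}(x;y)$ for the restricted characters, but covariant tensor modules of $\mathfrak{sl}(m|n)$ are generically atypical, so no such Kac-type quotient formula is available for the constituents of $V_{(2s)}^{\otimes L}$ — an obstruction you correctly name yourself, and which is precisely why the authors state this as a conjecture rather than a theorem. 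Your proposed remedies (a typicality hypothesis plus a ``deformation/specialisation'' of $\mathfrak{sl}(m|n)$ toward the even case) are not carried out and are not obviously well defined: it is unclear what functor carries multiplicities along such a degeneration, and the atypical modules are not a measure-zero exception one can deform away from, since essentially all $\lambda$ occurring in the decomposition are atypical. So the gap is genuine, and it is the same gap the paper acknowledges.

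Two further points of comparison. First, the paper's one proven case takes a route quite different from yours and avoids super-character theory entirely: for $\mathfrak{sl}(m)\subset\mathfrak{sl}(m|1)$ one expands $\left[\HS_{(2s)}(x;y)\right]^L=\sum_{M_m}\binom{L}{M_m}y^{M_m}S_{(2s)}(x)^{L-M_m}S_{(2s-1)}(x)^{M_m}$, observes that at fixed $M_m$ this is an ordinary $\mathfrak{sl}(m)$ tensor product of $L$ representations of two different spins, and applies Corollary~\ref{cor:diffspins}; the shift operator $\mathcal{D}_{R_{\mathfrak{sl}(m)(\Psi^+)}(t)}$ acts only on the factor $c_{s,L,M_m}(M_1,\dots,M_{m-1})$ and never touches the binomial prefactor. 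This works only because the subalgebra is purely even and there is a single odd direction, but it is a concrete mechanism you could have exhibited as a proved special case instead of the unproved typicality/deformation scheme. Second, your worry that the odd factors make the expansion of $R_{\mathfrak{h}(\Psi^+)}(t)$ ``a priori infinite'' and require a truncation lemma is a non-issue: the paper computes directly with the infinite series (e.g.\ $R_{\mathfrak{sl}(1|1)}(t)=\sum_{i\geq 0}(-1)^i t^i$), and all sums collapse to finite ones simply because $c_{s,L}(\vec{M})$ has finite support; no degree bound or hook constraint is needed for convergence. The real difficulty is atypicality and the attendant modification rules, not the infinitude of the operator.
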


The proofs of Conjectures~\ref{conj:super} and~\ref{conj:superbranching} are left for future work and the interested readers, but some evidence is provided by proofs in special cases and some numerical investigations in Section~\ref{sec:super}.

\vspace{5mm}

\noindent\thanks{{\textbf{Acknowledgment.} The authors would like to thank Arkadij Bojko for useful discussions. HZ would like to thank the hospitality of the Department of Mathematics and the Institute of Mathematical Sciences at the Chinese University of Hong Kong, where part of this work was done during his visit. H.S. was supported by the National Natural
Science Foundation of China No.12405087, Henan Postdoc Foundation (Grant No.22120055) and the Startup Funding of Zhengzhou University
(Grant No.~121-35220049,~121-35220581). R.Z. was partially supported by the National Natural Science Foundation of China (Grant No.~12105198). H.Z. was partially supported by the National Natural Science Foundation of China (Grant No.~12405083,~12475005) and the Shanghai Pujiang Program (Grant No.~24PJA119).}

\vspace{8mm}

\section{The difference formula and generalizations}
\label{sec:proof}

In this section, we will present and prove the difference formula which relates the expansion coefficients $\mu_\lambda$ and $c_{s,L}(\vec{M})$. Using the shift operator presentation, this difference formula will reproduce Equation~(\ref{eq:thm}) and thus provides a proof for Theorem~\ref{thm:main}.

\subsection{The difference formula} 
Following the convention in the introduction, we want to first present the following difference formula, which expresses $\mu_{\lambda}$ as an ``alternating'' sum of $c_{s,L}(\vec{M})$:
\begin{thm}\label{thm:main1} 
\begin{equation}
\label{eq:thm2}
    \mu_{\lambda} \:=\: \sum_{\sigma\in \mathcal{S}_{r+1}} {\rm sgn}(\sigma) c_{s,L}(M_j - \sum_{i=1}^{j}\left(\sigma(i) - i\right)) \, ,
\end{equation}
where $\lambda$ and $\vec{M}$ is related by Equation~\eqref{eq:map}.
\end{thm}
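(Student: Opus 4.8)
The plan is to pin down $\mu_\lambda$ by comparing the two expansions \eqref{eq:schurexpan} and \eqref{eq:genexpan} of $[\chi_{(2s)}(x)]^L$ after clearing the Weyl denominator. First I would multiply the Schur expansion by $\Delta(x)$ and use \eqref{eq:schurpoly} in the form $\Delta(x)\chi_\mu(x)=\sum_{\omega\in\mathcal{S}_{r+1}}{\rm sgn}(\omega)\prod_{i=1}^{r+1}x_{\omega(i)}^{\mu_i+r+1-i}$, so that $\Delta(x)[\chi_{(2s)}(x)]^L$ becomes a signed sum of the antisymmetric alternants $a_{\mu+\rho}$ with $\rho_i=r+1-i$. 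The key reduction is that, for dominant $\lambda$, the exponent vector $(\lambda_i+r+1-i)_i$ is strictly decreasing, so the monomial $x^{\lambda+\rho}:=\prod_k x_k^{\lambda_k+r+1-k}$ occurs in $\Delta(x)\chi_\mu(x)$ (for a partition $\mu$) if and only if $\mu=\lambda$, and then with coefficient $+1$: two strictly decreasing sequences that are permutations of one another coincide. Hence $\mu_\lambda$ is exactly the coefficient of $x^{\lambda+\rho}$ in $\Delta(x)[\chi_{(2s)}(x)]^L$, and the theorem reduces to a single monomial-coefficient extraction.

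To carry out that extraction I would substitute the monomial expansion \eqref{eq:genexpan} and rewrite $\Delta(x)=\sum_{\omega}{\rm sgn}(\omega)\prod_i x_{\omega(i)}^{r+1-i}=\sum_{\omega}{\rm sgn}(\omega)\prod_k x_k^{r+1-\omega^{-1}(k)}$. For a fixed $\omega$, matching the power of each $x_k$ against the target $\lambda_k+r+1-k$ forces the monomial-expansion index $\vec N$ to obey $N_{k-1}-N_k=(\lambda_k+r+1-k)-(r+1-\omega^{-1}(k))=\lambda_k+\omega^{-1}(k)-k$. Setting $D_k:=N_k-M_k$ and using $\lambda_k=M_{k-1}-M_k$ from \eqref{eq:map}, this telescopes to $D_k-D_{k-1}=k-\omega^{-1}(k)$ with $D_0=0$, giving $N_k=M_k-\sum_{i=1}^k(\omega^{-1}(i)-i)$; the endpoint relation $D_{r+1}=\sum_{i=1}^{r+1}(i-\omega^{-1}(i))=0$ holds automatically since $\omega^{-1}$ is a permutation, so the construction is consistent with $N_{r+1}=0$. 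Collecting the signs then yields $\mu_\lambda=\sum_{\omega}{\rm sgn}(\omega)\,c_{s,L}\big(M_j-\sum_{i=1}^j(\omega^{-1}(i)-i)\big)$, and since ${\rm sgn}(\omega)={\rm sgn}(\omega^{-1})$ and the sum over $\omega$ may be reindexed by $\sigma=\omega^{-1}$, this is precisely \eqref{eq:thm2}.

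Finally I would tidy up the conventions. Because $[\chi_{(2s)}(x)]^L$ is a genuine polynomial, whenever the shifted index $\vec N$ leaves the admissible range $0\le N_r\le\cdots\le N_1\le 2sL$ — equivalently when some exponent $\lambda_k+\sigma(k)-k$ is negative — the corresponding monomial cannot occur, so $c_{s,L}(\vec N)$ is taken to be zero; I would record this as the standard extension-by-zero of the occupancy coefficients. I would also invoke Remark~\ref{rmk:order} to restrict attention to those $\vec M$ for which $(2sL-M_1,\dots,M_r)$ is a genuine partition, ensuring that $x^{\lambda+\rho}$ is dominant and the uniqueness argument of the first paragraph applies. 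The main obstacle I anticipate is purely bookkeeping: keeping the inverse permutation $\omega^{-1}$ straight when passing from $\prod_i x_{\omega(i)}^{r+1-i}$ to a product over the index of $x_k$, and confirming that the telescoping constants match the claimed partial sums; once the index gymnastics are fixed, the identity falls out. The operator form \eqref{eq:thm} of Theorem~\ref{thm:main} is then obtained by recognizing this signed sum as the action of the shifted operator $\mathcal{D}_{R_{A_r}(t)}$ attached to $\Delta$.
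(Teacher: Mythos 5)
Your proof is correct and follows essentially the same route as the paper's: both multiply by $\Delta(x)$, identify $\mu_\lambda$ as the coefficient of the monomial $x_1^{\lambda_1+r}\cdots x_{r+1}^{\lambda_{r+1}}$, expand $\Delta(x)$ as a signed sum over permutations, and pair this against the monomial expansion of $\left[\chi_{(2s)}(x)\right]^L$ to land on \eqref{eq:thm2}. The only cosmetic difference is that you match exponents directly against \eqref{eq:genexpan} (with the $\omega^{-1}$ reindexing and telescoping in the $M$-variables), whereas the paper routes through the monomial-symmetric-polynomial coefficients $d_{s,L}$ and Lemma~\ref{lem:equal} before converting the shifts $\lambda_j \mapsto \lambda_j + \sigma(j)-j$ into $M_j \mapsto M_j - \sum_{i=1}^{j}(\sigma(i)-i)$; your explicit extension-by-zero convention for out-of-range $\vec{M}$ is a clarification the paper leaves implicit.
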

\begin{remark} This formula reminisce the formula for the tensor product multiplicities (see for example \cite[Equation~(13.202)]{DiFrancesco:1997nk})
\[
    \mathcal{N}_{\lambda \mu} \:=\: \sum_{\omega \in W} {\rm sgn}(\omega) {\rm mult}_{\mu}(\omega\cdot\nu - \lambda)\,,
\]
where the tensor product multiplicities $\mathcal{N}_{\lambda \mu}$ appears as coefficients in
\[
    \chi_\lambda \cdot \chi_\mu \:=\: \sum_{\nu} \mathcal{N}_{\lambda \mu} \chi_{\nu}\,,
\]
and $W$ is the Weyl group and ${\rm mult}_{\mu}(\omega\cdot\nu - \lambda)$ comes from the formal definition of a character
\[
    \chi_\mu \:=\: \sum_{\mu^{\prime}\in \Omega_{\mu}} {\rm mult}_{\mu}(\mu^\prime) e^{\mu^\prime}\,.
\]
One should note the crucial difference between the coefficient ${\rm mult}_{\mu}(\mu^\prime)$ and our coefficient $c_{s,L}(\vec{M})$ from their definitions.
\end{remark}

\begin{example}
    In the case of $\mathfrak{g}=A_1$, namely, $r=1$, one can immediately see that Equation~\eqref{eq:thm2} implies the following relation
    \begin{equation}
    \label{eq:a1thm}
        \mu_{\lambda} \:=\: c_{s,L}(M) - c_{s,L}(M-1)\,,
    \end{equation}
    where $\lambda_1=2sL-M$ and $ \lambda_2 = M$. A brute-force derivation of Equation~\eqref{eq:a1thm} is attached in Appendix~\ref{app:a1}.
\end{example}

One may note that, as a homogeneous symmetric polynomial, $\left[\chi_{(2s)}(x)\right]^L$ can also be expanded in other bases, other than Schur polynomials as in \eqref{eq:schurexpan},
such as {\it monomial symmetric polynomials}. Recall that a monomial symmetric polynomial associated with $\lambda = (\lambda_1,\lambda_2,\dots,\lambda_{r+1})$ is defined as
\begin{equation}
\label{eq:monopoly}
\mathcal{M}_{\lambda}(x) \:=\: \prod_{i=1}^{r+1}x_i^{\lambda_i} + \text{distinct permutations}\,.
\end{equation}

Let us expand $\left[\chi_{(2s)}(x)\right]^L$ in monomial symmetric polynomials and obtain
\begin{equation}
\label{eq:monexpan}
    \left[\chi_{(2s)}(x)\right]^L \:=\: \sum_{\lambda} d_{s,L}(\lambda)\ \mathcal{M}_{\lambda}(x)\,,
\end{equation}
where $d_{s,L}(\lambda)$ denotes the expansion coefficient. Using symmetric properties of $\left[\chi_{(2s)}(x)\right]^L$, one can show that:


\begin{lemma}
\label{lem:equal}
    Given the relation \eqref{eq:map} , one shall have
\begin{equation}\label{eq:lemequal}
d_{s,L}(\lambda) \:=\: c_{s,L}(\vec{M})\,.
\end{equation}
\end{lemma}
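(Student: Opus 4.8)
The plan is to prove the identity purely at the level of coefficients of the single symmetric polynomial $\left[\chi_{(2s)}(x)\right]^L$, showing that $c_{s,L}(\vec M)$ and $d_{s,L}(\lambda)$ are just two ways of reading off the same data: the former is the coefficient of an individual monomial, the latter the coefficient of a whole $\mathcal{S}_{r+1}$-orbit of monomials collected into $\mathcal{M}_\lambda$. First I would rewrite the generic term of \eqref{eq:genexpan} using \eqref{eq:map}: with the conventions $M_0=2sL$ and $M_{r+1}=0$ we have
\[
x_1^{2sL-M_1}x_2^{M_1-M_2}\cdots x_{r+1}^{M_r} \:=\: \prod_{i=1}^{r+1} x_i^{M_{i-1}-M_i} \:=\: \prod_{i=1}^{r+1} x_i^{\lambda_i}\,,
\]
so that $c_{s,L}(\vec M)$ is exactly the coefficient of the monomial $\prod_i x_i^{\lambda_i}$ in $\left[\chi_{(2s)}(x)\right]^L$. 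I would then verify that the index set $\{\vec M : 0\le M_r\le\cdots\le M_1\le 2sL\}$ is in bijection with the set of exponent vectors $(\lambda_1,\dots,\lambda_{r+1})$ of nonnegative integers with $\sum_i \lambda_i = 2sL$, the inverse map being $M_i = \sum_{j>i}\lambda_j$; in particular \eqref{eq:genexpan} is nothing but the \emph{full} monomial expansion of $\left[\chi_{(2s)}(x)\right]^L$, with no ordering imposed on the $\lambda_i$, which is the content of Remark~\ref{rmk:order}.

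The key step is the symmetry of $\left[\chi_{(2s)}(x)\right]^L$. Since $\chi_{(2s)}(x)$ is a symmetric polynomial in $x_1,\dots,x_{r+1}$, so is its $L$-th power, and hence the coefficient of a monomial $\prod_i x_i^{a_i}$ depends only on the multiset $\{a_1,\dots,a_{r+1}\}$. Through the bijection above, this means $c_{s,L}(\vec M)$ takes one and the same value on the entire $\mathcal{S}_{r+1}$-orbit of the exponent vector $(\lambda_1,\dots,\lambda_{r+1})$. I would then regroup the full monomial expansion \eqref{eq:genexpan} according to these orbits. By definition \eqref{eq:monopoly}, $\mathcal{M}_\lambda(x)$ is the sum of the distinct monomials obtained by permuting $(\lambda_1,\dots,\lambda_{r+1})$, each with coefficient $1$; collecting the orbit of a given $\lambda$ therefore contributes exactly $c_{s,L}(\vec M)\,\mathcal{M}_\lambda(x)$. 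Comparing with \eqref{eq:monexpan} and using the uniqueness of the expansion in the linearly independent family $\{\mathcal{M}_\lambda\}$ gives $d_{s,L}(\lambda) = c_{s,L}(\vec M)$, as claimed.

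The argument is essentially bookkeeping, so I do not expect a serious obstacle; the one point that must be handled with care is precisely the index-set matching. I would need to confirm that the constraint $0\le M_r\le\cdots\le M_1\le 2sL$ enumerates every nonnegative exponent vector once rather than only a single partition representative per orbit, since otherwise the regrouping into $\mathcal{M}_\lambda$ would fail, and to keep the conventions $M_0=2sL$, $M_{r+1}=0$ consistent when a permutation sends $\vec M$ outside the ``partition range'' discussed in Remark~\ref{rmk:order}, so that such terms are accounted for by the symmetry of the polynomial rather than being double counted.
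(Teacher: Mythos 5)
Your proof is correct and follows essentially the same route as the paper's: the permutation-invariance of the coefficient that you derive directly from the symmetry of $\left[\chi_{(2s)}(x)\right]^L$ is exactly the content of the identity \eqref{eq:identity2} that the paper invokes, and both arguments then collect the $\mathcal{S}_{r+1}$-orbit of monomials into $\mathcal{M}_{\lambda}$ and conclude via the basis property of monomial symmetric polynomials. Your explicit index-set bijection $M_i = \sum_{j>i}\lambda_j$ spells out a point the paper leaves implicit, but it is the same bookkeeping argument.
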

\begin{proof}
One can start with expansion \eqref{eq:genexpan} and collect all monomials with coefficients equal to $c_{s,L}(\vec{M})$, using the identity \eqref{eq:identity2}. Then these monomials constitute nothing but the monomial symmetric polynomial $\mathcal{M}_{\lambda}$ with the identification \eqref{eq:map}: 
\[
    \lambda_i \:=\: M_{i-1} - M_{i}\,.
\]
Therefore, the coefficient $d_{s,L}(\lambda)$ in the expansion \eqref{eq:monexpan} must be equal to $c_{s,L}(\vec{M})$ due to the fact that monomial symmetric polynomials form a basis for symmetric polynomials.
\end{proof}

\begin{remark}
    Although $d_{s,L}(\lambda) = c_{s,L}(\vec{M})$, there is a computational advantage in using $c_{s,L}(\vec{M})$ in Theorem~\ref{thm:main} and~\ref{thm:main1}, since $c_{s,L}(\vec{M})$ can be extracted directly from the generating function as discussed in Appendix~\ref{app:nrop}.
\end{remark}

\begin{proof}[Proof of Theorem~\ref{thm:main1}]
Let us first derive the relation between $d_{s,L}(\lambda)$ and $\mu_{\lambda}$. Following a standard trick as in \cite{fulton1991representation}, the actual goal is to compute the coefficient 
\begin{equation}
\label{eq:arcoef}
    \mu_{\lambda}\:=\:\left.\Delta(x)\cdot \left[\chi_{(2s)}(x)\right]^L\right|_{(\lambda_1 + r, \lambda_2 + r-1,\dots,\lambda_r + 1, \lambda_{r+1})}\,,
\end{equation}
namely the factor before $x_1^{\lambda_1+r}x_2^{\lambda_1+r-1}\cdots x_{r+1}^{\lambda_{r+1}}$. 
In the above, the Vandermonde determinant $\Delta(x)$ can be expanded as
\begin{equation}
\label{eq:vandermondeexpand}
    \Delta(x) \:=\: \sum_{\sigma\in \mathcal{S}_{r+1}} {\rm sgn}(\sigma) \prod_{i=1}^{r+1} x_{i}^{r+1-\sigma(i)}\,.
\end{equation}
It is also known that the product $\left[\chi_{(2s)}(x)\right]^L$ can be expanded in terms of monomial symmetric polynomials as \eqref{eq:monexpan}.

Due to the symmetric property of $\left[\chi_{(2s)}(x)\right]^L$, one can extract the multiplicity coefficient $\mu_\lambda$ by looking at one item $\prod_{i=1}^{r+1}x_i^{\lambda_i}$ in $\mathcal{M}_{\lambda}$ and pairing coefficients in \eqref{eq:monexpan} and \eqref{eq:vandermondeexpand}, and then read
\begin{equation} \label{eq:proofdiff2}
\begin{aligned}
    \mu_{\lambda}&\:=\:\left.\Delta(x)\cdot \left[\chi_{(2s)}(x)\right]^L\right|_{(\lambda_1 + r, \lambda_2 + r-1,\dots,\lambda_r + 1, \lambda_{r+1})} \\
    &\:=\: \sum_{\sigma\in \mathcal{S}_{r+1}} {\rm sgn}(\sigma) d_{s,L}(\lambda_j + \sigma(j)-j)\,,
\end{aligned}
\end{equation}
By Lemma~\ref{lem:equal}, we can substitute $d_{s,L}(\lambda)$ by $c_{s,L}(\vec{M})$. The shift $\lambda_j \rightarrow \lambda_j + \sigma(j)-j$ in the above equation can be written in $M_i$'s by \eqref{eq:map} and obtain $M_j \rightarrow M_j-\sum_{i=1}^{j}(\sigma(i)-i)$. Therefore, we can conclude
\[
\mu_{\lambda} \:=\: \sum_{\sigma\in\mathcal{S}_{r+1}} {\rm sgn}(\sigma) c_{s,L}(M_j-\sum_{i=1}^{j}(\sigma(i)-i))\,.
\]
\end{proof}

\subsection{The difference formula in shift operator}
\label{sec:shift}
The difference formula \eqref{eq:thm2} in Theorem~\ref{thm:main1} can be simplified if we rewrite it in terms of {\it shift operator}. Given a function $f(x)$, one can define a shift operator acting on $f(x)$ as below \footnote{This is slightly different from the usual shift operators, which will shift variables by $+1$.}
\[
\hat{t}:=\exp(-\partial_{x}),
\]
which shifts $x$ by $-1$ in $f(x)$, namely, $$\hat{t}: f(x)\mapsto f(x-1).$$
Acting $\hat{t}$ on $f(x)$ consecutively $n$ times leads to $f(x-n)$ and we shall define
\[
{\hat{t}}^n:f(x)\mapsto f(x-n).
\]

The above definition can be generalized to the case of multiple variables. Suppose that given a function $f(x_1, \dots, x_k)$ of $k$ variables, one can define the $k$ shift operators as follows 
\[
\hat{t}_i\::=\: \exp(-\partial_{x_i}), \ i=1,\dots, k,
\]
which shifts each $x_i$ in $f(x_1, \dots, x_k)$ by $-1$ ,
\[
    \hat{t}_i f(x_1, \dots, x_k) \:=\: f(x_1,\dots, x_i - 1, \dots, x_k)\,.
\]
One can further define a shift operator associated with a polynomial based on the above definition as follows.
\begin{defn}
\label{def:shiftoperator}
The shift operator $\mathcal{D}_{P(t_1,\ldots,t_k)}$ associated with the polynomial $P(t_1,\ldots,t_k)$ is defined as $\mathcal{D}_{P(t_1,\ldots,t_k)}:= P(\hat{t}_1,\dots,\hat{t}_k)$. Suppose $P(t_1,\ldots,t_k)$ has the expansion
\[
P(t_1,\ldots,t_k) \:=\: \sum_{\beta} {\rm sgn}(\beta) c_{\beta} {t_1}^{\beta_1} \cdots {t_{k}}^{\beta_k}\,,
\] 
where $\beta =(\beta_1,\dots,\beta_k)$, $c_{\beta}$ are the expansion coefficients, and we write ${\rm sgn}(\beta)$ explicitly for later convenience. Then, the associated shift operator $\mathcal{D}_{P(t_1,\ldots,t_k)}$ acting on $f(x_1,\dots,x_k)$ leads to
\[
\mathcal{D}_{P(t_1,\ldots,t_k)} f(x_1,\dots,x_k) \::=\:  \sum_{\beta} {\rm sgn}(\beta) c_{\beta} f(x_1 - \beta_1, \cdots, x_k - \beta_k) \,.
\]
\end{defn}

\begin{lemma}\label{lem:factor} 
Suppose that a polynomial can be factorized into a product of two polynomials of distinct variables $P(t_1,\dots,t_k) = P_1 (t_1,\dots,t_\ell) P_2 (t_{\ell+1},\dots,t_{k})$. Then, the shift operator associated with $P(t_1,\dots,t_k)$ also factorizes,
\[
    \mathcal{D}_{P(t_1,\dots,t_k)} \:=\: \mathcal{D}_{P_1(t_1,\dots,t_\ell)} \mathcal{D}_{P_2(t_{\ell+1},\dots,t_k)}\,.
\]
\end{lemma}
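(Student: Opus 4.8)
The plan is to recognize the assignment $P \mapsto \mathcal{D}_P = P(\hat{t}_1,\dots,\hat{t}_k)$ as a ring homomorphism and then invoke its multiplicativity. The essential input is that the shift operators commute pairwise: since $\hat{t}_i = \exp(-\partial_{x_i})$ and partial derivatives commute, one has $\hat{t}_i \hat{t}_j = \exp(-\partial_{x_i}-\partial_{x_j}) = \hat{t}_j \hat{t}_i$ for all $i,j$. In the situation of the lemma this is even more transparent, because the variables entering $P_1$ and $P_2$ are disjoint, so the two families of shift operators act on independent coordinates and commute trivially.

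First I would make precise that, because the $\hat{t}_i$ form a commuting family, the substitution $t_i \mapsto \hat{t}_i$ extends to a well-defined $\mathbb{C}$-algebra homomorphism from the polynomial ring in $t_1,\dots,t_k$ to the algebra of operators on functions of $x_1,\dots,x_k$. This is exactly the content of Definition~\ref{def:shiftoperator}: there $\mathcal{D}_P$ is nothing but $P$ evaluated at the commuting operators $\hat{t}_i$, and the explicit sign factors ${\rm sgn}(\beta)$ are absorbed into the coefficients $c_\beta$ and play no role. Next, multiplicativity of this homomorphism gives at once $\mathcal{D}_{P_1 P_2} = (P_1 P_2)(\hat{t}_1,\dots,\hat{t}_k) = P_1(\hat{t}_1,\dots,\hat{t}_k)\,P_2(\hat{t}_1,\dots,\hat{t}_k)$. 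Since $P_1$ depends only on $t_1,\dots,t_\ell$, its evaluation coincides with $\mathcal{D}_{P_1(t_1,\dots,t_\ell)}$, and likewise $P_2(\hat{t}_1,\dots,\hat{t}_k) = \mathcal{D}_{P_2(t_{\ell+1},\dots,t_k)}$; hence $\mathcal{D}_P = \mathcal{D}_{P_1}\mathcal{D}_{P_2}$, as claimed.

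If one prefers an entirely elementary verification avoiding the homomorphism language, I would instead expand $P_1 = \sum_\alpha a_\alpha\, t^\alpha$ and $P_2 = \sum_\gamma b_\gamma\, t^\gamma$, so that $P = \sum_{\alpha,\gamma} a_\alpha b_\gamma\, t^{\alpha}t^{\gamma}$, apply Definition~\ref{def:shiftoperator} to each side, and check that applying $\mathcal{D}_{P_2}$ and then $\mathcal{D}_{P_1}$ to a test function $f$ reproduces precisely the shifts prescribed by $\mathcal{D}_P$. The only point needing care --- and the single place where commutativity is used --- is that, because $P_1$ and $P_2$ shift disjoint sets of variables, the two operations can be composed in either order without interference; beyond this bookkeeping there is no substantive obstacle, so I expect the proof to be short in either formulation.
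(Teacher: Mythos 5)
Your proof is correct and takes essentially the same route as the paper, which simply states that the lemma ``follows immediately from the definition''; your homomorphism argument (and the elementary expansion in the last paragraph) is just a careful unpacking of that immediacy, with the key observation that the commuting family $\hat{t}_i$ makes the substitution $t_i \mapsto \hat{t}_i$ multiplicative. Nothing further is needed.
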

\begin{proof}
    This result follows immediately from the definition.
\end{proof}

For the general root system of the Lie algebra $\mathfrak{g}$ of rank $r$, let $\{\alpha_i, i=1,\dots,r\}$ be the simple roots and denote by $\Phi^+$ the set of positive roots. A typical positive root can be written as $\alpha = \sum_{i=1}^r n_i \alpha_i$ for $n_i \geq 0$ and not all $n_i$ being zero. 

The Weyl denominator is defined as
\begin{equation}
    R_\mathfrak{g} \:=\: \prod_{\alpha\in \Phi^+}\left(1 -  e^{-\alpha}\right)\,.
\end{equation}
One can expand the above expression using the {\it Weyl denominator identity}
\begin{equation}
    R_\mathfrak{g} \:=\: \sum_{\omega\in W(\mathfrak{g})} {\rm sgn}(\omega) e^{\omega(\rho) - \rho}\,,
\end{equation}
where $\rho = \frac{1}{2}\sum_{\alpha \in \Phi^+} \alpha$ is the Weyl vector and $W(\mathfrak{g})$ is the Weyl group of Lie algebra $\mathfrak{g}$. For later purposes, let us introduce the $r$ formal variables associated with each simple root $t_i:= e^{-\alpha_i}$. Then the Weyl denominator $R_\mathfrak{g}$ can be written in terms of $t$-variables as below,
\begin{equation}\label{eq:weyldenom}
    R_{\mathfrak{g}}(t) \:=\: \prod_{\alpha \in \Phi^+} (1- t^\alpha) \,,
\end{equation}
where we have used the multi-index notation $t^\alpha := t_1^{n_1}\cdots t_r^{n_r}$ for $\alpha = \sum_{i=1}^r n_i \alpha_i$. Further, suppose that the Weyl vector can be written in the fundamental basis as
\begin{equation}
    \rho \:=\: \sum_i \rho_i \alpha_i\,,
\end{equation}
the Weyl denominator identity leads to the following expansion
\begin{equation}\label{eq:weyldenom1}
    R_{\mathfrak{g}}(t) \:=\: \sum_{\omega \in W(\mathfrak{g})} {\rm sgn }(\omega) \prod_{i=1}^{r}{t_i}^{\omega(\rho_i)-\rho_i}\,,
\end{equation}
where $\omega(\rho_i)$ is the coefficient of $\omega(\rho) = \sum_{i=1}^{r} \omega(\rho_i) \alpha_i$.

Instead of considering the generic case, let us focus on the Lie algebra $A_r$ and we can obtain the following lemma:
\begin{lemma}
\label{lem:arweylid}
For $\mathfrak{g}=A_r$, the Weyl denominator \eqref{eq:weyldenom} can be expanded as below
\begin{equation}
\label{eq:Arweylid}
    R_{A_{r}}(t)\:=\:\prod_{\alpha \in \Phi^+}(1-t^\alpha) = \sum_{\omega \in \mathcal{S}_{r+1}} {\rm sgn}(\omega) t_1^{\omega(1)-1} t_2^{\sum_{i=1}^{2}\omega(i)-i} \cdots t_r^{\sum_{i=1}^{r}\omega(i)-i}\,.
\end{equation}
\end{lemma}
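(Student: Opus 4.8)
The plan is to bypass the general identity \eqref{eq:weyldenom1} and reduce the claim directly to the Vandermonde expansion of $\Delta(x)$ already recorded in \eqref{eq:vandermondeexpand}, by changing from the simple-root variables $t_k$ to the coordinate variables $x_i := e^{L_i}$, where $L_1,\dots,L_{r+1}$ is the standard basis with $\alpha_k = L_k - L_{k+1}$. In this realization the positive roots of $A_r$ are exactly $L_i - L_j$ for $1 \le i < j \le r+1$, and since $t_k = e^{-\alpha_k}$ one has $t^{L_i - L_j} = t_i t_{i+1}\cdots t_{j-1} = e^{-(L_i - L_j)} = x_j/x_i$. Thus the whole identity becomes a statement about symmetric functions in the $x_i$, which I would prove by manipulating $\prod_{i<j}(1 - x_j/x_i)$.

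First I would factor each term of the product as $1 - x_j/x_i = (x_i - x_j)/x_i$, so that
\[
R_{A_r} \:=\: \prod_{1\le i<j\le r+1}\left(1 - \frac{x_j}{x_i}\right) \:=\: \frac{\prod_{i<j}(x_i - x_j)}{\prod_{i<j}x_i} \:=\: \frac{\Delta(x)}{\prod_{i=1}^{r+1} x_i^{\,r+1-i}}\,,
\]
where the denominator is collected by noting that $x_i$ occurs once for each $j>i$, i.e. $r+1-i$ times. Substituting $\Delta(x) = \sum_{\omega}{\rm sgn}(\omega)\prod_i x_i^{r+1-\omega(i)}$ from \eqref{eq:vandermondeexpand} and cancelling the monomial $\prod_i x_i^{r+1-i}$ factor by factor gives
\[
R_{A_r} \:=\: \sum_{\omega\in\mathcal{S}_{r+1}} {\rm sgn}(\omega)\prod_{i=1}^{r+1} x_i^{\,i-\omega(i)}\,.
\]

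The last step is the change of variables back to the $t_k$, and this is the only place requiring care. Writing $c_i := i - \omega(i)$, one has $\sum_i c_i = 0$ because $\sum_i \omega(i) = \sum_i i$, so the exponents close up and the monomial $\prod_i x_i^{c_i}$ genuinely lies in the lattice spanned by the $t_k$. Matching $\prod_i x_i^{c_i}$ against $\prod_{k=1}^r t_k^{d_k}$ via $t_k = x_{k+1}/x_k$ and comparing the exponent of each $x_m$ yields the recursion $d_{m-1} - d_m = c_m$ with $d_0 = d_{r+1} = 0$, whose solution is the partial sum $d_k = -\sum_{j=1}^k c_j = \sum_{j=1}^k(\omega(j)-j)$; this is exactly the exponent of $t_k$ asserted in \eqref{eq:Arweylid}. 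An alternative route would feed $\rho = \sum_k \rho_k \alpha_k$ into the general identity \eqref{eq:weyldenom1} and compute $\omega(\rho_k) - \rho_k$ directly, but that reproduces the same partial-sum formula while introducing an $\omega$-versus-$\omega^{-1}$ convention subtlety, so the Vandermonde route above is cleaner and essentially self-contained.
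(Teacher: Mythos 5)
Your proof is correct and is essentially the same as the paper's: the paper likewise reduces the identity to the Vandermonde expansion in the variables $x_i = e^{L_i}$, divides by the monomial $x_1^r x_2^{r-1}\cdots x_r$, and converts to the $t$-variables via $t_1\cdots t_j = x_{j+1}/x_1$. Your explicit recursion $d_{m-1}-d_m = c_m$ for the $t$-exponents just spells out the change of variables that the paper leaves implicit.
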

\begin{proof} 
The above equation is just a rewrite of the Weyl denominator identity in terms of $t$ variables for the $A_r$ case. It is known that the Weyl denominator identity (or the Vandermonde determinant) gives 
\begin{equation}
\label{eq:vandet}
    \prod_{0 \leq i< j\leq r+1}(x_i - x_j) \:=\: \sum_{\omega \in S_{r+1}} {\rm sgn}(\omega) \prod_{i=1}^{r+1}x^{(r+1)-i}_{\omega(i)}\,,
\end{equation}
where $x_i:=e^{L_i}$ with $L_i$ is the standard basis. The relation between $t$ variables and $x$ variables can be obtained by their definitions,
\begin{equation}
\label{eq:t-x}
    t_1 t_2 \cdots t_j \:=\: e^{L_{j+1}-L_1} \:=\: \frac{x_{j+1}}{x_1}\,.
\end{equation}
Dividing both sides of \eqref{eq:vandet} by a factor of ${x_1}^{r} {x_2}^{r-1} \cdots x_r$ gives \eqref{eq:Arweylid}, which completes the proof.
\end{proof}

\begin{prop} \label{prop:poly} For the case $\mathfrak{g} = A_r$, $R_{A_r}(t) \in \mathbb{Z}[t_1,\dots,t_r]$.
\end{prop}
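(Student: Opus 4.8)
The plan is to prove polynomiality directly from the product form \eqref{eq:weyldenom}, rather than from the expanded identity \eqref{eq:Arweylid}. The conceptual point is that the right-hand side of \eqref{eq:Arweylid} is presented as an alternating sum of monomials $t_1^{\omega(1)-1}\cdots t_r^{\sum_{i=1}^{r}(\omega(i)-i)}$ whose individual exponents $\sum_{i=1}^{k}(\omega(i)-i)$ may well be negative, so a priori $R_{A_r}(t)$ is only visible as a Laurent polynomial. Working instead with the product $\prod_{\alpha\in\Phi^+}(1-t^\alpha)$ will make polynomiality manifest, because every positive root of $A_r$ has nonnegative coefficients in the simple roots.

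First I would recall the explicit description of $\Phi^+$ for $A_r$. With simple roots $\alpha_i = L_i - L_{i+1}$, every positive root is a contiguous sum $\alpha_i + \alpha_{i+1} + \cdots + \alpha_j$ with $1 \le i \le j \le r$; equivalently $\Phi^+ = \{L_i - L_{j+1} : 1 \le i \le j \le r\}$, so $|\Phi^+| = \binom{r+1}{2}$. In particular, writing a positive root as $\alpha = \sum_{k} n_k \alpha_k$, all coefficients satisfy $n_k \in \{0,1\} \subseteq \mathbb{Z}_{\ge 0}$.

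Next, under the substitution $t_k = e^{-\alpha_k}$ with the multi-index notation $t^\alpha = t_1^{n_1}\cdots t_r^{n_r}$, a contiguous root corresponds to the squarefree monomial $t^\alpha = t_i t_{i+1}\cdots t_j$. Since its exponents are nonnegative integers, each factor $1 - t^\alpha$ lies in $\mathbb{Z}[t_1,\dots,t_r]$. Then $R_{A_r}(t)$ will be the product of these finitely many factors over $\alpha \in \Phi^+$, and since $\mathbb{Z}[t_1,\dots,t_r]$ is closed under multiplication, I can conclude $R_{A_r}(t) \in \mathbb{Z}[t_1,\dots,t_r]$.

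There is essentially no hard step here: once $\Phi^+$ is identified, polynomiality is immediate from closure of the polynomial ring under products. The only subtlety worth flagging—and the reason the proposition is stated at all—is the tension with the expanded form \eqref{eq:Arweylid}: the apparent negative exponents in the alternating-sum expression must cancel after collecting terms, and this cancellation is guaranteed precisely because that sum equals the manifestly polynomial product established in Lemma~\ref{lem:arweylid}. If one preferred to argue directly from \eqref{eq:Arweylid}, the main (and only) obstacle would be verifying this cancellation combinatorially, which is exactly why routing through the product form is the cleaner path.
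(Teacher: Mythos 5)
Your proof is correct, and it takes a genuinely different route from the paper's. You argue directly from the defining product \eqref{eq:weyldenom}: every positive root of $A_r$ has coefficients $n_k\in\{0,1\}$ in the simple roots, so each factor $1-t^{\alpha}$ lies in $\mathbb{Z}[t_1,\dots,t_r]$, and a finite product of such factors does too; this is self-contained (it does not even need Lemma~\ref{lem:arweylid}) and in fact proves the statement verbatim for any semisimple $\mathfrak{g}$, not just $A_r$. The paper instead works with the expanded form \eqref{eq:Arweylid} and proves the \emph{term-wise} statement that every exponent $\sum_{i=1}^{j}(\omega(i)-i)$ is a nonnegative integer (zero when $\omega\in\mathcal{S}_j\times\mathcal{S}_{r+1-j}$, strictly positive otherwise). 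That stronger statement is what the paper actually uses in Section~\ref{sec:shift}: the operator $\mathcal{D}_{R_{A_r}(t)}$ of Definition~\ref{def:shiftoperator} acts through the monomial expansion of $R_{A_r}(t)$, and one needs to know that the $\mathcal{S}_{r+1}$-indexed sum \eqref{eq:Arweylid} is that expansion, with nonnegative shifts, in order to identify $\mathcal{D}_{R_{A_r}(t)}c_{s,L}(\vec{M})$ with the alternating sum \eqref{eq:thm2}. One caution about your closing remark: it is not the case that negative-exponent terms in \eqref{eq:Arweylid} ``cancel after collecting terms.'' The exponent vector $\bigl(\sum_{i=1}^{j}(\omega(i)-i)\bigr)_{j=1}^{r}$ determines $\omega$ (its partial sums recover $\omega(1),\dots,\omega(r)$ successively), so the $(r+1)!$ monomials are pairwise distinct and no cancellation can occur; combined with your polynomiality result, this injectivity actually forces every exponent to be nonnegative, recovering the paper's term-wise claim. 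If you want your cleaner route to feed into Theorem~\ref{thm:main}, you should make that injectivity observation explicit rather than appeal to a cancellation that never happens.
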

\begin{proof}
    It is obvious that $R_{A_r}(t)$ is integer-valued, as it is defined as a product of integer-valued expressions. To verify that $R_{A_r}(t)$ is a polynomial, one needs to show $\sum_{i=1}^j \left(\omega(i) - i\right) \in \mathbb{Z}_{\geq 0}$ for all $j=1,\dots,r$ and any $\omega \in \mathcal{S}_{r+1}$. It is obvious that $\sum_{i=1}^j \left(\omega(i) - i\right)$ is an integer as it is a sum of integers. When $\omega \in \mathcal{S}_{j} \times \mathcal{S}_{r+1-j} \leq \mathcal{S}_{r+1}$, where $\mathcal{S}_{j}$ only permute the first $j$ indices, $\sum_{i=1}^j \left(\omega(i) - i\right) = 0$. When $\omega \in \mathcal{S}_{r+1}$ but $\omega \notin \mathcal{S}_{j} \times \mathcal{S}_{r+1-j}$, one can conclude $\sum_{i=1}^j \left(\omega(i) - i\right)>0$. In addition, there are only finite terms in $R_{A_r}(t)$ due to the finite rank of the Weyl group.
\end{proof}


By Lemma~\ref{lem:arweylid} and Proposition~\ref{prop:poly}, we can rewrite the difference formula \eqref{eq:thm2} by using the shift operator $\mathcal{D}_{R_{A_r}(t)}= R_{A_r}(\hat{t})$, acting on the function $c_{s,L}(\vec{M})$, and this will finally lead to our Theorem~\ref{thm:main}. 

Let us end this section by giving an example in the following that illustrates the action of the shift operator associated with $R_{\mathfrak{sl}(3)}(t)$.
\begin{example}
For $\mathfrak{g} = \mathfrak{sl}(3)$, the positive roots are $\alpha_1, \alpha_2, \alpha_1 + \alpha_2$, so 
\[
\begin{aligned}
R_{\mathfrak{sl}(3)}(t) &\:=\: (1-t_1)(1-t_2)(1-t_1t_2)\,,\\
&\:=\:1 - t_1 - t_2 +{t_1}^2 t_2 + t_1 {t_2}^2 - {t_1}^2 {t_2}^2 \,.
\end{aligned}
\]
Then, the associated shift operator acting of a function $f(M_1,M_2)$ gives
\begin{multline*}
\mathcal{D}_{R_{\mathfrak{sl}(3)}(t)} f(M_1, M_2) \:=\: f(M_1,M_2)-f(M_1-1,M_2)-f(M_1,M_2-1)\\
+f(M_1-2,M_2-1)+f(M_1-1,M_2-2)-f(M_1-2,M_2-2)\,.
\end{multline*}
\end{example}

\subsection{Tensor product of different spins}\label{sec:diffspin} The above discussion is for the $L$-th tensor power of the same representation, namely the spin-$s$ representation of $\mathfrak{sl}(r+1)$. One can generalize this to tensor products of different spin representations straightforwardly. Suppose that there are $L$ spins, denoted by $s_i$ for $i=1,\dots,L$, or by $\vec{s}$ for short. Let $|\vec{s}|:= \sum_{i=1}^L s_i$. Consider the following two expansions for the tensor product representation, in terms of Weyl characters,
\begin{equation}
\label{eq:genexpan2}
    \prod_{i=1}^L \chi_{(2s_i)}(x) \:=\: \sum_{\vec{M}} c_{\vec{s}}(\vec{M}) \prod_{j=1}^{r+1} {x_{i}}^{M_{i-1}-M_i} \:=\: \sum_{\lambda} \mu_\lambda \chi_{\lambda}(x)\,,
\end{equation}
where we have used the conventions $M_{0} := 2|\vec{s}| = 2\sum_{i}s_i$ and $M_{r+1} :=0$. Here, the coefficient $c_{\vec{s}}(\vec{M})$ corresponds accordingly to the number of configurations in a nested restricted occupancy problem on a rectangle with one zigzag edge.

One can also expand $\prod_{i=1}^L \chi_{2s_i}(x)$ into monomial symmetric polynomials as \eqref{eq:monexpan},
\[
    \prod_{i=1}^L \chi_{(2s_i)}(x) \:=\: \sum_{\lambda} d_{\vec{s}}(\lambda) \mathcal{M}_{\lambda}(x)\,,
\]
and similarly obtain, for $\lambda$ and $\vec{M}$ related by Equation~\eqref{eq:map},
\[
d_{\vec{s}}(\lambda) \:=\: c_{\vec{s}}(\vec{M})\,.
\]
Following the proof of Theorem~\ref{thm:main} and~\ref{thm:main1}, one can also conclude
\begin{cor}\label{cor:diffspins} For the partition $\lambda$ and $\vec{M}$ related by
\begin{equation}\label{eq:map2}
\lambda_i \:=\: M_{i-1} - M_i
\end{equation}
for $i=1,\dots r+1$, where $M_0 = 2|\vec{s}|$ and $M_{r+1} = 0$, one have
    \[
        \mu_{\lambda} \:=\: \sum_{\sigma\in \mathcal{S}_{r+1}} {\rm sgn}(\sigma) c_{\vec{s}}(M_j - \sum_{i=1}^{j}\left(\sigma(i) - i\right)) \,,
    \]
    or, one can write in terms of the shift operator associated with $R_{A_r}(t)$,
    \[
        \mu_{\lambda} \:=\: \mathcal{D}_{R_{A_r}(t)} c_{\vec{s}}(\vec{M})\,.
    \]
\end{cor}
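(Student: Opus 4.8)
The plan is to repeat the argument for Theorem~\ref{thm:main1} essentially verbatim, observing that nowhere in that proof did the equality of the spins actually enter. The only structural facts used there were that $\left[\chi_{(2s)}(x)\right]^L$ is a symmetric, homogeneous polynomial of a fixed total degree, that it expands in the monomial symmetric basis, and that its monomial coefficients coincide with the occupancy coefficients under the dictionary \eqref{eq:map}. All three have direct analogues here. First I would note that $\prod_{i=1}^L \chi_{(2s_i)}(x)$ is symmetric, since each factor $\chi_{(2s_i)}(x)$ is the complete homogeneous symmetric polynomial $h_{2s_i}(x)$, and that it is homogeneous of total degree $2|\vec{s}| = 2\sum_i s_i$; this is exactly the degree encoded by the convention $M_0 = 2|\vec{s}|$ in \eqref{eq:map2}.

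Next I would invoke the identity $d_{\vec{s}}(\lambda) = c_{\vec{s}}(\vec{M})$ stated just above the corollary, which is the exact analogue of Lemma~\ref{lem:equal} and whose justification uses only the symmetry of the product together with the fact that the monomial symmetric polynomials form a basis. Then, following the extraction used in the proof of Theorem~\ref{thm:main1}, I would compute $\mu_\lambda$ as the coefficient of $x_1^{\lambda_1+r} x_2^{\lambda_2+r-1}\cdots x_{r+1}^{\lambda_{r+1}}$ in $\Delta(x)\cdot\prod_{i=1}^L\chi_{(2s_i)}(x)$, expanding $\Delta(x)$ as in \eqref{eq:vandermondeexpand} and the product in the monomial basis. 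Pairing the two expansions term by term yields
\[
\mu_\lambda \:=\: \sum_{\sigma\in\mathcal{S}_{r+1}}{\rm sgn}(\sigma)\, d_{\vec{s}}(\lambda_j+\sigma(j)-j)\,,
\]
after which I would substitute $d_{\vec{s}} = c_{\vec{s}}$ and translate the shift $\lambda_j \mapsto \lambda_j+\sigma(j)-j$ into $M_j \mapsto M_j - \sum_{i=1}^j(\sigma(i)-i)$ via \eqref{eq:map2}, giving the first displayed formula of the corollary.

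Finally, to obtain the shift-operator form I would apply Lemma~\ref{lem:arweylid} and Proposition~\ref{prop:poly}, which together repackage the alternating sum over $\mathcal{S}_{r+1}$ as the action of $\mathcal{D}_{R_{A_r}(t)} = R_{A_r}(\hat{t})$ on $c_{\vec{s}}(\vec{M})$. This step is formally identical to the passage from Theorem~\ref{thm:main1} to Theorem~\ref{thm:main} and is insensitive to the spins, since the operator $\mathcal{D}_{R_{A_r}(t)}$ depends only on the rank $r$ and not on $\vec{s}$.

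I do not expect any genuine obstacle, since the corollary is a direct generalization whose only new input is the change of total-degree normalization. The one point that requires a little care is the bookkeeping with the convention $M_0 = 2|\vec{s}|$ in place of $M_0 = 2sL$: one must check that Remark~\ref{rmk:order} still guarantees that the ranges of $M_i$ one restricts to correspond to genuine partitions $\lambda$, and that out-of-range contributions self-organize and cancel by symmetry exactly as in the equal-spin case. This is routine once the normalization is fixed.
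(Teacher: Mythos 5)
Your proposal is correct and matches the paper's own treatment: the paper likewise establishes the corollary by noting that $\prod_{i=1}^L \chi_{(2s_i)}(x)$ is symmetric of degree $2|\vec{s}|$, that the analogue $d_{\vec{s}}(\lambda) = c_{\vec{s}}(\vec{M})$ of Lemma~\ref{lem:equal} holds, and then repeating the proofs of Theorems~\ref{thm:main1} and~\ref{thm:main} verbatim, including the repackaging of the alternating sum as $\mathcal{D}_{R_{A_r}(t)}$ via Lemma~\ref{lem:arweylid} and Proposition~\ref{prop:poly}. Your added care about the normalization $M_0 = 2|\vec{s}|$ and the self-organization of out-of-range terms is consistent with Remark~\ref{rmk:order} and does not depart from the paper's argument.
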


\vspace{8mm}

\section{Discussions on the difference formula}

\subsection{Pieri's formula} Pieri's formula gives a combinatorial rule for the decomposition of the tensor product of an irreducible representation with the symmetric power of a standard representation. Using Weyl characters, Pieri's formula can be explicitly written as
\begin{equation}
\label{eq:pieri}
    \chi_{\nu}(x) \cdot \chi_{(2s)}(x) \:=\: \sum_{\lambda} \chi_{\lambda}(x)\,,
\end{equation}
where the sum is over the Young diagrams obtained following Pieri's rule, that is to add $2s$ boxes to the Young diagram $\lambda$ such that any two of these added $2s$ boxes are not in the same column.

Let us first apply the Pieri's formula \eqref{eq:pieri} to the case when $\nu = (2s)$, and we shall have
\begin{equation}
\label{eq:square}
    \left[\chi_{(2s)}(x)\right]^2 \:=\: \sum_{\lambda} \chi_{\lambda}(x)\,,
\end{equation}
and there are $(2s+1)$ Young diagrams in the summation. Furthermore, Pieri's formula tells us that the multiplicity coefficients $\mu_{\lambda}=1$ for all possible $\lambda$ in Figure~\ref{fig:square}.
\begin{figure}[!h]
\centering
\includegraphics[scale=0.75]{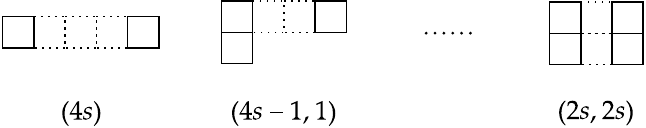}
\caption{All possible Young diagrams in Equation~\eqref{eq:square}.}
\label{fig:square}
\end{figure}

On the other hand, our difference formula in the case $L=2$ gives 
\begin{equation}
    \left[\chi_{(2s)}(x)\right]^2 \:=\: \sum_{\lambda} \mu_{\lambda} \chi_{\lambda}(x)\,,  
\end{equation}
with $\mu_{\lambda} = \mathcal{D}_{R_{A_r}(t)} c_{s,2}(\vec{M})$ for arbitrary $A_r$. Therefore, we can conclude the following proposition, combining Pieri's formula and our difference formula.
\begin{prop} When $L=2$, the combinatorial coefficients $c_{s,2}(\vec{M})$ in \eqref{eq:genexpan} satisfy the following equations,
\begin{equation}
    \mathcal{D}_{R_{A_r}(t)} c_{s,2}(\vec{M}) \:=\: \left\{ \begin{array}{ll}
    1, &  \lambda(\vec{M}) \in\left\{ (4s), (4s-1,1), \dots, (2s,2s)\right\} \\
    0, & \text{otherwise}
    \end{array}\,,
    \right.
\end{equation}
where $\lambda(\vec{M})$ is determined by \eqref{eq:map}, namely $\lambda(\vec{M})_i = M_{i-1} - M_i $.
\end{prop}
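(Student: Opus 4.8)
The plan is to obtain the statement by confronting the two independent computations of the same multiplicity $\mu_\lambda$ at $L=2$. On one hand, Theorem~\ref{thm:main} already gives $\mu_\lambda = \mathcal{D}_{R_{A_r}(t)} c_{s,2}(\vec{M})$ for every $\lambda$, with $\lambda = \lambda(\vec{M})$ determined by the dictionary \eqref{eq:map}. On the other hand, Pieri's formula specialized to $\nu=(2s)$, that is Equation~\eqref{eq:square}, computes the same $\mu_\lambda$ directly from representation theory. Equating the two expressions is what produces the claimed piecewise formula, so the real content of the proof is merely to read off the right-hand side from Pieri's rule and substitute.

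First I would record the output of Pieri's rule. Adding $2s$ boxes to the one-row diagram $(2s)$ subject to the constraint that no two added boxes lie in the same column produces exactly the two-row shapes $(4s-k,k)$ for $k=0,1,\dots,2s$, each with multiplicity one; in the $(r+1)$-component convention of Remark~\ref{rmk:red} these read $(4s-k,k,0,\dots,0)$. Hence $\mu_\lambda=1$ precisely when $\lambda \in \{(4s),(4s-1,1),\dots,(2s,2s)\}$ and $\mu_\lambda=0$ otherwise, uniformly in $r$. Substituting this value of $\mu_\lambda$ into the identity of Theorem~\ref{thm:main} at $L=2$ immediately yields the proposition.

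The only point requiring care, rather than a genuine obstacle, is the bookkeeping between conventions: Pieri naturally yields partitions with at most two rows, whereas $c_{s,2}(\vec{M})$ and the shift operator $\mathcal{D}_{R_{A_r}(t)}$ are phrased in terms of the full tuple $\vec{M}=(M_1,\dots,M_r)$ and the redundant $(r+1)$-row labeling. I would verify that under \eqref{eq:map}, with $M_0=2sL=4s$, the two-row shapes $(4s-k,k,0,\dots,0)$ correspond exactly to $\vec{M}=(k,0,\dots,0)$ for $0\le k\le 2s$, while every other admissible $\vec{M}$ falls into the ``otherwise'' branch, consistent with the self-organizing range discussed in Remark~\ref{rmk:order}. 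With this identification the equality of the two expressions for $\mu_\lambda$ is transparent, and the proof is complete.
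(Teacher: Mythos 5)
Your proposal is correct and follows essentially the same route as the paper: the paper likewise obtains the proposition by equating the multiplicities $\mu_\lambda$ from Pieri's formula \eqref{eq:square} (each Pieri shape $(4s-k,k)$, $0\le k\le 2s$, occurring with multiplicity one) with the expression $\mu_\lambda=\mathcal{D}_{R_{A_r}(t)}c_{s,2}(\vec{M})$ supplied by Theorem~\ref{thm:main}. Your additional bookkeeping check that the Pieri shapes correspond to $\vec{M}=(k,0,\dots,0)$ under \eqref{eq:map} is accurate and only makes explicit what the paper leaves implicit.
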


More generally, we can consider applying Pieri's formula to the case $\nu = (2s^\prime)$ and, without loss of generality, suppose that $s^\prime \geq s$. Similarly, one can conclude the following
\begin{prop}
    In the case of $L=2$, the coefficient $c_{\vec{s}}(\vec{M})$ in \eqref{eq:genexpan2} shall satisfy 
\begin{equation}
    \mathcal{D}_{R_{A_r}(t)} c_{\vec{s}}(\vec{M}) \:=\: \left\{ \begin{array}{ll}
    1, &  \lambda(\vec{M}) \in\left\{ (2s^\prime+2s), (2s^\prime+2s-1,1), \dots, (2s^\prime,2s)\right\} \\
    0, & \text{otherwise}
    \end{array}\,,
    \right.
\end{equation}
where $\lambda(\vec{M})$ is the same as in the previous proposition.
\end{prop}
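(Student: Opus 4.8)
The plan is to run the same argument as in the preceding proposition, combining Pieri's formula with the multi-spin difference formula of Corollary~\ref{cor:diffspins}, the only new ingredient being a careful bookkeeping of the two-row shapes produced when $s'\geq s$. Concretely, I would compute the multiplicities $\mu_\lambda$ in $\chi_{(2s')}(x)\cdot\chi_{(2s)}(x)=\sum_\lambda \mu_\lambda\,\chi_\lambda(x)$ in two ways---once geometrically via Pieri and once via the shift operator---and then equate them.

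First I would apply Pieri's formula \eqref{eq:pieri} with $\nu=(2s')$, so that $\lambda$ ranges over the diagrams obtained by adjoining $2s$ boxes to the single row $(2s')$, no two of the added boxes lying in the same column. Because the base diagram occupies only the first row and the added boxes form a horizontal strip (at most one per column), the shape $\lambda$ has at most two rows; moreover every second-row box is a newly added box, so, as only $2s$ boxes are added in total, $\lambda_2\leq 2s$ and $\lambda=(\lambda_1,\lambda_2)$ with $\lambda_1+\lambda_2=2s'+2s$. The no-two-in-a-column condition amounts to requiring that the second-row boxes (columns $1,\dots,\lambda_2$) avoid the newly added first-row boxes (columns $2s'+1,\dots,\lambda_1$), i.e.\ $\lambda_2\leq 2s'$. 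Here the hypothesis $s'\geq s$ enters: it gives $\lambda_2\leq 2s\leq 2s'$, so this column condition is automatically satisfied, and every $\lambda=(2s'+2s-k,\,k)$ with $k=0,1,\dots,2s$ occurs exactly once. This reproduces the list $\{(2s'+2s),(2s'+2s-1,1),\dots,(2s',2s)\}$ with $\mu_\lambda=1$, and $\mu_\lambda=0$ for every other $\lambda$.

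Finally I would invoke Corollary~\ref{cor:diffspins} with $\vec{s}=(s,s')$ and $L=2$, which expresses these very same multiplicities as $\mu_\lambda=\mathcal{D}_{R_{A_r}(t)}\,c_{\vec{s}}(\vec{M})$ under the dictionary \eqref{eq:map2}. Equating the two evaluations of $\mu_\lambda$ yields the claimed case distinction for $\mathcal{D}_{R_{A_r}(t)}\,c_{\vec{s}}(\vec{M})$. The substantive point---everything else being a direct citation of Pieri's formula and of Corollary~\ref{cor:diffspins}---is the combinatorial step of the middle paragraph: one must verify that the containment and horizontal-strip constraints, under $s'\geq s$, collapse to the single clean range $0\leq\lambda_2\leq 2s$ and that no shape with three or more rows can arise. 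I do not anticipate any genuine obstacle beyond this bookkeeping.
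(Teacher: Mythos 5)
Your proposal is correct and follows essentially the same route as the paper: the paper also obtains this proposition by applying Pieri's formula to $\chi_{(2s')}(x)\cdot\chi_{(2s)}(x)$ (using $s'\geq s$ so the horizontal-strip condition collapses to $0\leq\lambda_2\leq 2s$) and then identifying the resulting multiplicities with $\mathcal{D}_{R_{A_r}(t)}\,c_{\vec{s}}(\vec{M})$ via Corollary~\ref{cor:diffspins}. Your middle paragraph merely makes explicit the two-row bookkeeping that the paper leaves implicit.
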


\subsection{Kostka numbers}
Let us first recall the definition of Kostka numbers and some of their properties.
\begin{defn}
    The Kostka number $K_{\nu\lambda}$ is the number of semistandard tableaux on $\nu$ of type $\lambda$, namely, the number of ways one can fill the boxes of the Young diagram of $\nu$ with $\lambda_1$ $1$'s, $\lambda_2$ $2$'s, up to $\lambda_k$ $k$'s, following the rule of Young tableaux.
\end{defn}

The Kostka numbers can appear as coefficients \cite[Equation~(A.9)]{fulton1991representation}
\begin{equation}
    H_{\nu}(x) \:=\: \sum_{\lambda} K_{\lambda\nu} S_{\lambda}(x)\,,
\end{equation}
where $K_{\lambda\nu}$ are the Kostka numbers on $\lambda$ of type $\nu$, $S_{\lambda}(x)$ Schur polynomials and $H_{\nu}(x)$ complete symmetric polynomials associated with the Young diagram $\nu$. Alternatively, one can rewrite the above equation as below
\begin{equation}
\label{eq:kostka1}
    K_{\lambda \nu} \:=\: [\Delta\cdot H_{\nu}]_{(\lambda_1 +k-1,\lambda_2 +k-2,\dots,\lambda_k)}\,,
\end{equation}
where $\Delta$ is the Vandermonde determinant.

Given a symmetric polynomial $P(x)$, it is known that it can be expanded in terms of monomial symmetric polynomials $\mathcal{M}_{\lambda}(x)$ or in terms of Schur polynomials $S_{\nu}(x)$,
\[
\begin{aligned}
    P(x) &\:=\: \sum_{\lambda} \mathcal{M}_{\lambda}(x) [P]_{\lambda}\,,\\
    P(x) &\:=\: \sum_{\nu} S_{\nu}(x) [\Delta \cdot P]_{(\nu_1 +k-1,\nu_2 +k-2,\dots,\nu_k)}\,,
\end{aligned}
\]
where $[P]_{\lambda}$ and $[\Delta \cdot P]_{(\nu_1 +k-1,\nu_2 +k-2,\dots,\nu_k)}$ are expansion coefficients of ${x_1}^{\lambda_1}\cdots {x_{k}}^{k}$ and ${x_1}^{\nu_1+k-1} {x_2}^{\nu_2+k-2}\cdots {x_k}^{\nu_k}$ in $P(x)$, respectively. The Kostka numbers can be viewed as coefficients in the change of bases for symmetric polynomials:
\begin{thm}[Lemma~A.26 in \cite{fulton1991representation}] \label{thm:kostka}
For any symmetric polynomial $P$, the following identity holds
\begin{equation}
\label{eq:kostka}
    [P]_{\lambda} \:=\: \sum_{\nu} K_{\nu\lambda} [\Delta \cdot P]_{(\nu_1 +k-1,\nu_2 +k-2,\dots,\nu_k)}\,.
\end{equation}
\end{thm}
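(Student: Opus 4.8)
The plan is to realize this identity as the composition of two changes of basis for symmetric polynomials in the $k$ variables $x=(x_1,\dots,x_k)$. The two expansions of $P$ displayed just before the statement already express $P$ simultaneously in the monomial basis $\{\mathcal{M}_\lambda\}$ and in the Schur basis $\{S_\nu\}$, with coefficients $[P]_\lambda$ and $[\Delta\cdot P]_{(\nu_1+k-1,\dots,\nu_k)}$ respectively. The only extra ingredient needed is the transition matrix from the Schur basis to the monomial basis, and the claim is that this matrix is precisely the matrix of Kostka numbers.

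Concretely, the key step is to establish
\[
    S_\nu(x) \:=\: \sum_{\lambda} K_{\nu\lambda}\, \mathcal{M}_\lambda(x)\,,
\]
the sum being over partitions $\lambda$ of the same size as $\nu$. To see this I would use the combinatorial (tableau) description of the Schur polynomial, $S_\nu(x)=\sum_T x^{\mathrm{wt}(T)}$, where $T$ ranges over all semistandard Young tableaux of shape $\nu$ with entries in $\{1,\dots,k\}$ and the monomial $x^{\mathrm{wt}(T)}=x_1^{t_1}\cdots x_k^{t_k}$ records the type $(t_1,\dots,t_k)$ of $T$. Since $S_\nu$ is symmetric, its expansion in the monomial basis is pinned down by the coefficient of the dominant monomial $x_1^{\lambda_1}\cdots x_k^{\lambda_k}$; by the tableau formula this coefficient counts the semistandard tableaux on $\nu$ of type $\lambda$, which is exactly the definition of $K_{\nu\lambda}$.

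With this identity in hand the theorem is immediate. Substituting the Schur-to-monomial expansion into the Schur expansion of $P$ and interchanging the two finite sums gives
\[
    P \:=\: \sum_\nu [\Delta\cdot P]_{(\nu_1+k-1,\dots,\nu_k)} \sum_\lambda K_{\nu\lambda}\,\mathcal{M}_\lambda \:=\: \sum_\lambda \Bigl(\sum_\nu K_{\nu\lambda}\,[\Delta\cdot P]_{(\nu_1+k-1,\dots,\nu_k)}\Bigr)\mathcal{M}_\lambda\,.
\]
Comparing with the monomial expansion $P=\sum_\lambda [P]_\lambda\,\mathcal{M}_\lambda$ and invoking the linear independence of the monomial symmetric polynomials (they form a basis), one reads off term by term the desired formula $[P]_\lambda=\sum_\nu K_{\nu\lambda}[\Delta\cdot P]_{(\nu_1+k-1,\dots,\nu_k)}$.

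The main obstacle is the key step, namely justifying $S_\nu=\sum_\lambda K_{\nu\lambda}\mathcal{M}_\lambda$. If one takes the bialternant (Weyl-character) form \eqref{eq:schurpoly} as the definition of $S_\nu$, this identity is not formal: it requires the classical equivalence between the bialternant and the tableau definitions of the Schur polynomial, provable for instance via the Lindstr\"om--Gessel--Viennot lemma or via the Jacobi--Trudi identity together with the RSK correspondence. Everything downstream of that identity is routine bookkeeping with finite sums and the basis property of the $\mathcal{M}_\lambda$.
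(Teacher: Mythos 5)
Your proposal is correct, but there is nothing in the paper to compare it against: the paper does not prove this statement, it imports it wholesale as Lemma~A.26 of \cite{fulton1991representation} (the bracketed attribution in the theorem header is the paper's entire justification). What you have written is essentially the standard proof of that lemma, and it is close in spirit to the argument in the cited reference: the entire content is the transition-matrix identity $S_\nu=\sum_\lambda K_{\nu\lambda}\,\mathcal{M}_\lambda$, after which the claim follows by substituting the Schur expansion of $P$ into this identity, interchanging the (finite) sums, and invoking linear independence of the monomial symmetric polynomials. An equivalent packaging, which is how Fulton--Harris phrase it, is to observe that both sides of \eqref{eq:kostka} are linear in $P$ and to verify the identity on the Schur basis $P=S_\nu$, where the right-hand side collapses to $K_{\nu\lambda}$ by orthonormality of the coefficient extraction $[\Delta\cdot S_\mu]_{(\nu_1+k-1,\dots,\nu_k)}=\delta_{\mu\nu}$, and the left-hand side is $K_{\nu\lambda}$ by the same transition-matrix identity; this variant and yours are the same computation read in two orders. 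The caveat you flag at the end is the genuine one and is worth keeping explicit: since the paper defines Schur polynomials by the bialternant \eqref{eq:schurpoly}, the identity $S_\nu=\sum_\lambda K_{\nu\lambda}\,\mathcal{M}_\lambda$ is not formal but rests on the classical equivalence of the bialternant and tableau definitions (Lindstr\"om--Gessel--Viennot, or Jacobi--Trudi plus RSK); modulo that standard input, your argument is complete.
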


For our purposes, we take $k=r+1$ and $P(x) = \left[ \chi_{(2s)}(x) \right]^{L}=:H_{\delta}(x)$, which is by definition a symmetric polynomial of degree $2sL$. Here, $H_{\delta}(x)$ is the complete symmetric polynomial associated with the rectangle Young diagram $\delta = (2s, 2s, \dots, 2s)$ of $L$ rows and $2s$ columns. Then, one can immediately recognize that the coefficients $[H_\delta(x)]_{\lambda}$ and $[\Delta \cdot H_\delta(x)]_{(\nu_1 +r,\nu_2 +r-1,\dots,\nu_{r+1})}$ are just $d_{s,L}(\lambda)$ (or equivalently $c_{s,L}(\vec{M}(\lambda))$) and $\mu_{\nu}$, respectively. Therefore, Equation~\eqref{eq:kostka} leads to
\[
d_{s,L}(\lambda) \:=\: \sum_{\nu} K_{\nu\lambda}\  \mu_\nu\,,
\]
and, in this sense, Equation~\eqref{eq:proofdiff2} in the proof provides a reciprocal version of the above equation. The inverse Kostka numbers can be read directly from \eqref{eq:proofdiff2}.

The above discussions apply to the more general case with $P(x) = H_{\delta}(x)$, where $H_{\delta}(x)$ is determined by the Young diagram $\delta = (2s_1,\dots,2s_L)$, with the coefficient $d_{s,L}(\lambda)$ changed to $d_{\vec{s}}(\lambda)$.

\subsection{Frobenius character formula and the hook length formula}
The Frobenius character formula, or simply the Frobenius formula, provides a method to compute characters of irreducible representations of symmetric groups. The Frobenius formula can be expressed as the following:
\begin{thm}[Frobenius formula]
    \begin{equation}
    \label{eq:fcf}
        \prod_{j} P_j(x)^{i_j} \:=\: \sum_{\lambda} \chi_{\lambda}(C_{\bold{i}}) S_{\lambda}(x)\,,
    \end{equation}
    where $P_j(x)$ is the power sum $P_j(x):= \sum_{i=1}^{k}{x_i}^j$, $C_{\bold{i}}$ denotes the conjugacy class in the symmetric group $\mathcal{S}_{d}$ with the index $\bold{i}=(i_1,\dots,i_d)$ meaning $C_{\bold{i}}$ consists $i_\ell$ $\ell$-cycles for $\ell=1,\dots,d$ and $\sum_{\alpha=1}^{d} \ell i_{\ell} = d$, and $\chi_{\lambda}(C_{\bold{i}})$ is the character of the representation $V_\lambda$ on the conjugacy class $C_{\bold{i}}$ of $\mathcal{S}_{d}$. $S_{\lambda}(x)$ is the associated Schur polynomial.
\end{thm}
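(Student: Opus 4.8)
The plan is to prove the Frobenius formula via Schur--Weyl duality, which yields \eqref{eq:fcf} as an immediate trace identity. Fix an integer $N \geq d$ and let $\mathrm{GL}_N(\C)$ act diagonally on the $d$-fold tensor power $(\C^N)^{\otimes d}$ while $\mathcal{S}_d$ acts by permuting the tensor factors; these two actions commute. Schur--Weyl duality (the double-commutant theorem applied to this pair of actions) furnishes the bimodule decomposition
\[
    (\C^N)^{\otimes d} \:\cong\: \bigoplus_{\lambda \vdash d,\, \ell(\lambda) \leq N} \mathbb{S}^\lambda(\C^N) \otimes M_\lambda\,,
\]
where $\mathbb{S}^\lambda(\C^N)$ is the irreducible $\mathrm{GL}_N$-representation of highest weight $\lambda$ and $M_\lambda$ is the irreducible $\mathcal{S}_d$-module indexed by $\lambda$. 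First I would take a diagonalizable $g \in \mathrm{GL}_N(\C)$ with eigenvalues $x_1,\dots,x_N$ and any permutation $\sigma \in C_{\mathbf{i}}$, and compute the trace of the operator $g^{\otimes d}\circ \sigma$ on $(\C^N)^{\otimes d}$ in two ways.

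The key computation is the left-hand trace. I would argue that the operator factors as a tensor product over the cycles of $\sigma$, so that its trace is the product of the traces on each cyclic block. On a single block coming from a $j$-cycle, the operator $g^{\otimes j}\circ(\text{$j$-cycle})$ sends a basis tensor to another basis tensor, and only the terms lying on the diagonal survive the trace, giving $\operatorname{tr}(g^j) = \sum_i x_i^j = P_j(x)$. Multiplying over all cycles yields
\[
    \operatorname{tr}_{(\C^N)^{\otimes d}}\!\left(g^{\otimes d}\circ \sigma\right) \:=\: \prod_j P_j(x)^{i_j}\,.
\]
On the other hand, because the two actions commute, $g^{\otimes d}$ acts only on the $\mathbb{S}^\lambda$-factors and $\sigma$ only on the $M_\lambda$-factors, so the trace over the decomposition factorizes as $\sum_\lambda \operatorname{tr}_{\mathbb{S}^\lambda}(g)\,\operatorname{tr}_{M_\lambda}(\sigma) = \sum_\lambda S_\lambda(x)\,\chi_\lambda(C_{\mathbf{i}})$, where I identify $\operatorname{tr}_{\mathbb{S}^\lambda}(g)$ with the Schur polynomial $S_\lambda(x)$ through the Weyl character formula \eqref{eq:schurpoly}. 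Equating the two expressions proves \eqref{eq:fcf} as an identity of symmetric polynomials in $x_1,\dots,x_N$; since $N \geq d$ is arbitrary, the identity holds at the level of symmetric functions and hence for every partition $\lambda$ of $d$.

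The main obstacle is establishing Schur--Weyl duality itself---that the images of $\C[\mathrm{GL}_N]$ and $\C[\mathcal{S}_d]$ in $\operatorname{End}\!\left((\C^N)^{\otimes d}\right)$ are each other's full commutants---which is the substantive input; I would cite it from the standard references rather than reprove it. A secondary technical point is the careful cycle-by-cycle bookkeeping in the trace computation. As an alternative route that stays closer to the symmetric-function manipulations used elsewhere in this paper, one can combine the Cauchy identity $\prod_{i,j}(1-x_iy_j)^{-1} = \sum_\lambda S_\lambda(x)S_\lambda(y)$ with its power-sum expansion $\sum_{\mathbf{i}} z_{\mathbf{i}}^{-1}\prod_j P_j(x)^{i_j}P_j(y)^{i_j}$, where $z_{\mathbf{i}} = \prod_j j^{i_j} i_j!$; introducing the Frobenius characteristic map and showing it is an isometry for the Hall inner product then identifies the pairing $\langle \prod_j P_j^{i_j},\, S_\lambda\rangle$ with $\chi_\lambda(C_{\mathbf{i}})$, which is precisely \eqref{eq:fcf}. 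In this second route the main work is the isometry property, which in turn rests on the first orthogonality relations for the characters of $\mathcal{S}_d$.
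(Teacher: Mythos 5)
The paper does not actually prove this theorem: it is stated as classical background (the Frobenius character formula, standard material from references such as Fulton--Harris \cite{fulton1991representation}) and then immediately specialized to the conjugacy class $\mathbf{i}=(L)$ to recover the hook-length formula. So there is no in-paper proof to compare against; your proposal supplies a proof where the paper supplies only a citation. Your Schur--Weyl argument is the standard textbook proof and is sound: the bimodule decomposition of $(\C^N)^{\otimes d}$ is correct, the cycle-by-cycle trace computation is correct (a basis tensor contributes to the trace of $g^{\otimes d}\circ\sigma$ exactly when its indices are constant on the cycles of $\sigma$, each $j$-cycle contributing $\operatorname{tr}(g^j)=P_j(x)$), and the factorized trace over the decomposition gives $\sum_\lambda S_\lambda(x)\,\chi_\lambda(C_{\mathbf{i}})$ once $\operatorname{tr}_{\mathbb{S}^\lambda(\C^N)}(g)$ is identified with the Schur polynomial via the Weyl character formula. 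Two minor points deserve explicit mention: (i) it suffices to take $g$ diagonal from the start, since the claimed identity is a polynomial identity in the eigenvalues (alternatively, invoke density of diagonalizable elements); and (ii) the descent from $N\geq d$ variables to the $k$ variables of the statement uses the fact that Schur polynomials specialize correctly when extra variables are set to zero, vanishing when $\ell(\lambda)>k$ --- your remark that the identity holds ``at the level of symmetric functions'' is implicitly this observation and could be made explicit. Your alternative route via the Cauchy identity and the isometry of the Frobenius characteristic map is equally valid and arguably closer in spirit to the symmetric-function manipulations used elsewhere in the paper, at the cost of needing the orthogonality relations for $\mathcal{S}_d$-characters as input. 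Either route is a complete argument modulo the one classical ingredient you correctly flag and cite (Schur--Weyl duality, respectively the isometry property), which is an entirely reasonable division of labor.
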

Below we take $k=r+1$ and $d=L$. Taking the conjugacy class $C_{\bold{i}}$ with conjugacy structure ${\bold{i}}=(L)$, {\it i.e.} consisting of $L$ cycles of length $1$, ${\rm dim}(V_\lambda):=\chi_{\lambda}(C_{(L)})$ is the dimension of $\lambda$-representation of $\mathcal{S}_{L}$ and the Forbenius's formula \eqref{eq:fcf} tells us
\begin{equation}
\label{eq:ff1}
    \left( x_1 + \dots + x_{r+1}\right)^{L} \:=\: \sum_{\lambda} {\rm dim}(V_\lambda) S_{\lambda}(x)\,,
\end{equation}
where ${\rm dim}(V_\lambda)$ is the number of standard Young tableaux associated with the Young diagram $\lambda$. One can compute $[\Delta(x)\cdot(x_1+\cdots+x_{r+1})^L]_{(\lambda_1+r,\lambda_2+r-1,\dots,\lambda_{r+1})}$ and obtain the hook-length formula:
\begin{prop}[Hook-length formula]
\label{prop:hklength}
\begin{equation}
\label{eq:hklength}
    {\rm dim}(V_\lambda) \:=\: \frac{L!}{\prod_{i,j} h_{\lambda}(i,j)}\,.
\end{equation}
\end{prop}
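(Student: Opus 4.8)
The plan is to combine the coefficient-extraction trick already used in the proof of Theorem~\ref{thm:main1} with the multinomial theorem to obtain a determinantal expression for $\dim(V_\lambda)$, and then to convert that determinant into the hook-length product by a classical evaluation. Note first that $\dim(V_\lambda)=\chi_\lambda(C_{(L)})$ is the dimension of an irreducible $\mathcal{S}_L$-representation, so $\lambda\vdash L$. Applying the same reasoning as in \eqref{eq:arcoef}--\eqref{eq:proofdiff2} to \eqref{eq:ff1}: since $\{S_\mu\}$ is a basis and $\Delta\cdot S_\mu=\sum_{\sigma}{\rm sgn}(\sigma)\prod_i x_{\sigma(i)}^{\mu_i+r+1-i}$, extracting the coefficient of $x_1^{\lambda_1+r}x_2^{\lambda_2+r-1}\cdots x_{r+1}^{\lambda_{r+1}}$ isolates the term $\mu=\lambda$, so that
\[
    \dim(V_\lambda) \:=\: [\Delta(x)\cdot(x_1+\cdots+x_{r+1})^L]_{(\lambda_1+r,\lambda_2+r-1,\dots,\lambda_{r+1})}\,.
\]

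Next I would expand $\Delta(x)$ by \eqref{eq:vandermondeexpand} and $(x_1+\cdots+x_{r+1})^L$ by the multinomial theorem, and match exponents. For a permutation $\sigma$, the factor $\prod_i x_i^{r+1-\sigma(i)}$ forces the multinomial exponents $m_i=\lambda_i+\sigma(i)-i$ (and $\sum_i m_i=|\lambda|=L$ is automatic), yielding
\[
    \dim(V_\lambda) \:=\: \sum_{\sigma\in\mathcal{S}_{r+1}}{\rm sgn}(\sigma)\,\frac{L!}{\prod_{i=1}^{r+1}(\lambda_i+\sigma(i)-i)!} \:=\: L!\,\det\!\left(\frac{1}{(\lambda_i-i+j)!}\right)_{1\le i,j\le r+1}\,,
\]
with the convention $1/m!=0$ for $m<0$.

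Then I would evaluate the determinant. Setting $\ell_i:=\lambda_i+r+1-i$ (a strictly decreasing sequence), the $(i,j)$ entry becomes $1/(\ell_i-(r+1-j))!$; factoring $1/\ell_i!$ out of each row leaves the entry $\ell_i(\ell_i-1)\cdots(\ell_i-(r-j))$, a monic polynomial of degree $r+1-j$ in $\ell_i$. Since falling factorials of distinct degrees form a monic polynomial basis, unitriangular column operations reduce the matrix to the Vandermonde matrix $(\ell_i^{r+1-j})$, whose determinant is $\prod_{1\le i<j\le r+1}(\ell_i-\ell_j)$ by \eqref{eq:vandermondeexpand}. Hence $\dim(V_\lambda)=L!\,\prod_{i<j}(\ell_i-\ell_j)\big/\prod_i\ell_i!$.

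The final, and main, step is to identify this ratio with $1/\prod_{(i,j)}h_\lambda(i,j)$, which is exactly \eqref{eq:hklength}. The crux is the classical combinatorial identity that the hook lengths along row $i$ of $\lambda$ are precisely the set $\{1,2,\dots,\ell_i\}\setminus\{\ell_i-\ell_k:k>i\}$, so that $\prod_j h_\lambda(i,j)=\ell_i!\big/\prod_{k>i}(\ell_i-\ell_k)$; multiplying over $i$ gives $\prod_{(i,j)}h_\lambda(i,j)=\prod_i\ell_i!\big/\prod_{i<k}(\ell_i-\ell_k)$, the reciprocal of the ratio above, completing the proof of Proposition~\ref{prop:hklength}. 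This hook/beta-set correspondence is the part requiring genuine combinatorial input, while everything preceding it is linear algebra; I therefore expect this identity to be the main obstacle. I would establish it either by induction on the number of nonzero rows (deleting the bottom row shifts the $\ell_i$ and alters the hook multiset in a controlled way) or by the standard bijection identifying each row's hook lengths with the complement of the first-column content set, and otherwise cite it as a well-known fact.
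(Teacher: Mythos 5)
Your proposal is correct and takes essentially the same approach as the paper: the paper's entire proof is the remark that one computes $[\Delta(x)\cdot(x_1+\cdots+x_{r+1})^L]_{(\lambda_1+r,\lambda_2+r-1,\dots,\lambda_{r+1})}$ starting from the Frobenius expansion \eqref{eq:ff1}, which is precisely your first step. The rest of your argument (multinomial expansion into a determinant, reduction to a Vandermonde, and the hook/beta-set identity $\prod_j h_\lambda(i,j)=\ell_i!\big/\prod_{k>i}(\ell_i-\ell_k)$) supplies the standard details, as in Fulton--Harris, that the paper leaves implicit.
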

With the identification of $S_{\lambda}(x)$ and $\chi_{\lambda}(x)$ as they are both Schur polynomials, Equation~\eqref{eq:ff1} can be viewed as a special case of Equation~\eqref{eq:schurexpan} when $s=1/2$. Therefore,  the coefficients on the right hand side of both equations should equal, namely ${\rm dim}(V_\lambda) = \mu_{\mu}$, which implies another formula for ${\rm dim}(V_\lambda)$ using our difference formula \eqref{eq:thm}:
\begin{prop}
\label{prop:hkdiff}
\begin{equation}
\label{eq:hkdiff}
    {\rm dim}(V_\lambda) \:=\: \mu_{\lambda} \:=\: \mathcal{D}_{R_{A_r}(t)} c_{1/2,L}(\vec{M})\,.
\end{equation}
\end{prop}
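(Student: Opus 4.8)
The plan is to recognize Proposition~\ref{prop:hkdiff} as nothing more than the $s=1/2$ specialization of Theorem~\ref{thm:main}, once the combinatorial quantity ${\rm dim}(V_\lambda)$ has been identified with the representation-theoretic multiplicity $\mu_\lambda$ via the Frobenius formula. First I would set $s=1/2$, so that $2s=1$ and the representation $V_{(2s)}$ degenerates to the one-box representation $V_{(1)} = {\rm Sym}^{1}(\mathbb{C}^{r+1}) = \mathbb{C}^{r+1}$, the standard representation of $A_r$. Its Weyl character is simply the first power sum,
\[
\chi_{(1)}(x) \:=\: x_1 + \cdots + x_{r+1} \:=\: P_1(x)\,.
\]
Consequently the left-hand side of the Frobenius formula \eqref{eq:ff1}, namely $(x_1+\cdots+x_{r+1})^L$, is literally $[\chi_{(2s)}(x)]^L$ evaluated at $s=1/2$.

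Next I would compare the two Schur expansions of this one symmetric polynomial. On the one hand, \eqref{eq:ff1} expands it as $\sum_\lambda {\rm dim}(V_\lambda)\, S_\lambda(x)$, where ${\rm dim}(V_\lambda) = \chi_\lambda(C_{(L)})$ is the value of the $\mathcal{S}_L$-character on the identity conjugacy class, equivalently the number of standard Young tableaux of shape $\lambda$. On the other hand, the defining expansion \eqref{eq:schurexpan} at $s=1/2$ reads $\sum_\lambda \mu_\lambda\, \chi_\lambda(x)$. Since $S_\lambda$ and $\chi_\lambda$ denote the very same Schur polynomial, and since the Schur polynomials are linearly independent (they form a basis for the ring of symmetric polynomials), matching coefficients forces ${\rm dim}(V_\lambda) = \mu_\lambda$ for every admissible $\lambda$.

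Finally I would invoke Theorem~\ref{thm:main} directly at $s=1/2$: the difference formula gives $\mu_\lambda = \mathcal{D}_{R_{A_r}(t)}\, c_{1/2,L}(\vec{M})$ with $\lambda$ and $\vec{M}$ related by \eqref{eq:map}. Chaining the two equalities yields the asserted expression for ${\rm dim}(V_\lambda)$, and completes the proof.

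There is no genuine obstacle here, since the statement is an immediate corollary of results already established. The only point requiring mild care is the bookkeeping of the redundant $(r+1)$-row labeling of Remark~\ref{rmk:red} together with the ordering caveat of Remark~\ref{rmk:order}, so that the coefficient comparison ${\rm dim}(V_\lambda) = \mu_\lambda$ is read off for partitions $\lambda$ lying in the admissible range; once those conventions are respected, the identification of coefficients in the two Schur expansions is unambiguous and the argument goes through verbatim.
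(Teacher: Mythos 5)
Your proposal is correct and follows essentially the same route as the paper: identify $[\chi_{(1)}(x)]^L$ with the left-hand side of the Frobenius expansion \eqref{eq:ff1}, match Schur coefficients against \eqref{eq:schurexpan} at $s=1/2$ to conclude ${\rm dim}(V_\lambda)=\mu_\lambda$, and then apply Theorem~\ref{thm:main}. The only additions beyond the paper's argument are the explicit appeal to linear independence of Schur polynomials and the bookkeeping remarks, both of which are harmless refinements of the same proof.
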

\begin{remark}
We have provided another proof of Proposition~\ref{prop:hkdiff} in \cite{Shu:2024crv} by directly calculating \eqref{eq:hkdiff}, which reproduces the hook length formula \eqref{eq:hklength}.
\end{remark}

\vspace{8mm}

\section{Branching rule for tensor product representation}
\label{sec:branch}
In this section, we generalize Theorem~\ref{thm:main} and consider decomposition of the tensor-power representation into representations restricted to subalgebras of $A_r$. Consider the root space decomposition of a general semisimple Lie algebra
\begin{equation}
    \mathfrak{g} \:=\: \mathfrak{g}_0 \bigoplus \left(\bigoplus_{\alpha \in \Phi}\mathfrak{g}_{\alpha} \right)\,,
\end{equation}
where $\mathfrak{g}_0$ is its Cartan subalgebra. For each root $\alpha\in\Phi$ one can associate a $A_1$ subalgebra, with generators $H_{\alpha}$, $E_{\alpha}$ and $E_{-\alpha}$. Therefore, a subalgebra of $\mathfrak{g}$ can be studied by investigating how multiple $A_1$'s ``enhance'' to this subalgebra. This is equivalent to looking into how a subset of positive roots generates the root systems of the subalgebra.

We will now focus on describing the subalgebras of $\mathfrak{g}=A_r$ by a subset of positive roots as follows. Denote by $\Phi^+$ the positive roots of $A_r$. The simple roots of $A_r$ can be written in terms of the standard basis,$\{L_i\}_{i=1}^{r+1}$ as $\alpha_i = L_i - L_{i+1}$ for $i=1,\dots,r$, and any positive root $\beta \in \Phi^+$ can be expressed as $\beta = L_i - L_j$ for some $i < j$.

The positive roots ${\Psi}^+$ associated with a sugalgebra $\mathfrak{h} \subset A_r$ is a subset of positive roots of $A_r$, ${\Psi}^+ \subset \Phi^+$ , which can be generically written as a disjoint union of $k$ components:
\[
{\Psi}^+ \:=\: \Phi^+_{1} \sqcup \cdots \sqcup \Phi^+_{k} \subset \Phi^+\,,
\]
where each component $\Phi^+_{i}$ generates the roots system of $A_{r_i}$ and thus determines the subalgebra structure. In terms of standard basis, the above disjoint union corresponds to $k$ disjoint index sets $I_1,\dots, I_k \subset \{1,2,\dots,r+1\}$, such that any element of $\Phi_j^+$ can be written as $L_{j_m} - L_{j_n}$ for some $j_m<j_n \in I_j$. Therefore, a maximum rank subalgebra $\mathfrak{h}$ of $A_r$, determined by this subset ${\Psi}^+$, is given as
\begin{equation}
\label{eq:subalg}
    \mathfrak{h}(\Psi^+) \::=\: {\mathfrak{gl}(1)}^{\oplus n}\oplus A_{r_1} \oplus \cdots \oplus A_{r_k} \subset A_r\,,
\end{equation}
where we have added the possible remaining ${\mathfrak{gl}(1)}$'s to make the rank of $\mathfrak{h}$ maximal and $n:=r-\sum_i r_i$.

The structure of the subset ${\Psi}^+ = \bigsqcup_{j=1}^k \Phi_j^+ \subset \Phi^+$ naturally induces the embedding of the subalgebra, $\mathfrak{h} \hookrightarrow A_r$. We will keep the same labeling for roots in $\Psi^+$ as in $\Phi^+$, which can keep track of subalgebra structures. Even though two subalgebras might be isomorphic, the decompositions onto restricted representations could be different due to different embeddings.

\begin{remark}
As we have discussed in \cite{Shu:2024crv}, various choices of subset ${\Psi}^+$ correspond to various twisted boundary conditions. More specifically, for each element $\alpha = \sum_i \alpha_i$ in ${\Psi}^+$, the linear combination of  $\sum_i \theta_i$ vanishes, where $\theta_i$'s correspond to parameters of twisted boundary conditions. In particular, if a simple root $\alpha_i$ appears in ${\Psi}^+$, then $\theta_i = 0$. 
\end{remark}

\begin{example}[label=ex:decomp]
Let us consider two subalgebras of $\mathfrak{g}=A_r$. 
\begin{itemize}[leftmargin=11pt]
    \item Take ${\Psi}^+ = \{\alpha_1+\alpha_2, \alpha_3, \alpha_1+\alpha_2 + \alpha_3, \alpha_5\}$, ${\Psi}^+$ can be written as a disjoint union of two components:
    \[
        {\Psi}^+ \:=\: \{\alpha_1+\alpha_2, \alpha_3, \alpha_1+\alpha_2 + \alpha_3\} \sqcup \{\alpha_5\}=: \Phi_1^+ \sqcup \Phi_2^+\,,
    \]
    and it is easy to see that $\Phi_1^+$ and $\Phi_2^+$ correspond to positive roots of $\mathfrak{sl}(3)$ and $\mathfrak{sl}(2)$, respectively. Then, the associated subalgebra of consideration is
    \[
        \mathfrak{h}(\Psi^+)\:=\:\mathfrak{sl}(3)\oplus \mathfrak{sl}(2) \oplus \mathfrak{gl}(1)^{r-3} \subset   \mathfrak{sl}(r+1)  \,.
    \]
    \item If we choose a subset of positive roots $\{\alpha_1+\alpha_2, \alpha_3, \alpha_1+\alpha_2 + \alpha_3, \alpha_4\}$, the structure of positive roots in $A_r$ automatically enhances this set to a minimal set of positive roots that contains $\{\alpha_1+\alpha_2, \alpha_3, \alpha_1+\alpha_2 + \alpha_3, \alpha_4\}$:
    \[
        {\Psi}^+ \::=\: \{\alpha_1+\alpha_2, \alpha_3, \alpha_1+\alpha_2 + \alpha_3, \alpha_4, \alpha_3+\alpha_4, \alpha_1+\alpha_2+\alpha_3+\alpha_4\}\,,
    \]
    and the subalgebra $\mathfrak{h}({\Psi}^+)$ is $\mathfrak{sl}(4)\oplus \mathfrak{gl}(1)^{\oplus r-3}$.
\end{itemize}
\end{example}

We want to study the decomposition of the tensor product representation into restricted representations in this subalgebra $\mathfrak{h}$. Therefore, instead of considering \eqref{eq:schurexpan}, we need to consider the following expansion
\begin{equation}
\label{eq:branchexpan}
   \left[ \chi_{(2s)}(A_r)\right]^L \:=\: \sum_{\lambda} \mu_{\lambda}^{\mathfrak{h}(\Psi^+)} \chi_{\lambda}(\mathfrak{h}(\Psi^+))\,,
\end{equation}
where $\lambda$ labels irreducible representations for subalgebra $\mathfrak{h}(\Psi^+)$ and $\lambda$ should be written in several components depending on the structure of this subalgebra. Suppose that the subalgebra $\mathfrak{h}(\Psi^+)$ takes the form of \eqref{eq:subalg}, then $\lambda=\{\lambda_1,\dots,\lambda_{r+1}\}$ can be written in components of two types:
\begin{itemize}[leftmargin=11pt]
    \item for each $i=1,\dots,k$, denote the indices in $I_{i}$ by $i_{1}, \dots, i_{r_i+1}$, and then $\lambda^{(i)}=(\lambda_{i_1},\dots, \lambda_{i_{r_{i+1}}})$ labels irreducible representations of $A_{r_i+1}$, in the sense of Remark~\ref{rmk:red} and ~\ref{rmk:order};
    \item for $m\in \hat{I}$ with $\hat{I}:=\{1,\dots,r+1\}- \cup_{j=1}^k I_j$, $\lambda_m$ labels irreducible representations, or equivalently charges, of $\mathfrak{gl}(1)$'s in \eqref{eq:subalg}.
\end{itemize}
As the summation on the right-hand side of \eqref{eq:branchexpan} is over the restricted representations of $\mathfrak{h}\subset A_r$, one can directly obtain $\lambda$ from the Young diagrams of $A_r$ by looking at the subset $\Psi^+$. That is, for each disjoint component $\Phi_i \subset \Psi^+$, rows labeled by the associated index set $I_i$ can be extracted from the Young diagrams of $A_r$ and obtain a smaller Young diagram associated with $\lambda^{(i)}$, which shall label the irreducible representations of $A_{r_i}$. Since the index sets $I_i\cap I_j = \varnothing$ for every $i\neq j$, $\lambda^{(i)}$ and $\lambda^{(j)}$ do not share common rows from the Young diagrams of $A_r$. After extracting all $A_{r_i}$'s, the unpicked rows in the Young diagrams of $A_r$ correspond to the charges of the remaining $\mathfrak{gl}(1)$'s.

The Weyl character of the subalgebra $\chi_{\lambda}(\mathfrak{h}(\Psi^+))$ in Equation~\eqref{eq:branchexpan} is a product of the Weyl characters of each component, using $(r+1)$ formal variables $x_i$ as before,
\[
    \chi_{\lambda}(\mathfrak{h}(\Psi^+)) \:=\: \prod_{m\in \hat{I}} {x_{m}}^{\lambda_m}\prod_{i=1}^k \chi_{\lambda^{(i)}}(A_{r_i})\,.
\]

As discussed in the introduction, Theorem~\ref{thm:branching} provides a difference formula to calculate the multiplicity $\mu_{\lambda}^{\mathfrak{h}(\Psi^+)}$, from the expansion coefficients $c_{s,L}(\vec{M})$ as in \eqref{eq:genexpan}. Again, $\lambda_i$'s and $\vec{M}$ are related by the dictionary \eqref{eq:map}. Let us provide a proof below.
\begin{proof}[Proof of Theorem~\ref{thm:branching}]
    Note that the nontrivial Weyl denominator in $\chi_{\lambda}(\mathfrak{h}(\Psi^+))$ gets contributions from nonabelian factors $A_{r_i}$'s, which can be written as a product of Vandermonde determinants:
    \begin{equation}
        \Delta(\mathfrak{h}) \:=\: \prod_{i=1}^k \Delta^{(i)}(x)\,,
    \end{equation}
    where
    \begin{equation}
        \Delta^{(i)}(x) \:=\: \prod_{\substack{ i_a, i_b \in I_i \\i_a < i_b} } (x_{i_a} - x_{i_b})\,.
    \end{equation}
    Similarly to the proof of Theorem~\ref{thm:main1}, we need to extract the coefficient in front of the monomial 
    \[
        \prod_{m\in \hat{I}} {x_{m}}^{\lambda_m} \prod_{i=1}^k \prod_{j=1}^{r_i+1} {x_{i_j}}^{\lambda_{i_j} + r_{i}+1-j}
    \]
    in the product 
    \begin{equation}
        \prod_{i=1}^k \Delta^{(i)}(x) \cdot \left[ \chi_{(2s)}(x)\right]^L \,.
    \end{equation}
    This can be done by recursively acting $\Delta^{(i)}(x)$ on $\left[ \chi_{(2s)}(x)\right]^L$ for $i=1,\dots k$, because $I_i$'s are disjoint sets of indices and each $\Delta^{(i)}(x)$ can be viewed as an operator mapping symmetric functions to antisymmetric functions, leading to a factorization property at each step. Finally, we only need to match the factor $\prod_{m\in \hat{I}} {x_{m}}^{\lambda_m}$. This leads to a similar formula as \eqref{eq:thm2},
    \begin{equation}
    \label{eq:thm3}
        \mu^{\mathfrak{h}(\Psi^+)}_{\lambda} \:=\: \sum_{\sigma\in W(\mathfrak{h})} {\rm sgn}(\sigma) c_{s,L}(M_j - \sum_{i=1}^{j}\left(\sigma(i) - i\right)) \, ,
    \end{equation}
    where $W(\mathfrak{h}) = S_{r_1+1}\times \cdots \times S_{r_k+1}$ is the Weyl group of the sub Lie algebra $\mathfrak{h}$. 
    
    Each of the steps above leads to an action of the shift operator $\mathcal{D}_{R_{r_i}(t^{(i)})}$, where $t^{(i)}$ stands for the $t$-variables from $\{x_{i_j} \mid \text{for}\ i_j\in I_i\}$, by the same definition as \eqref{eq:t-x}. Due to the factorization property (Lemma~\ref{lem:factor}), we will end up with
    \[
        \mu_{\lambda}^{\mathfrak{h}(\Psi^+)} \:=\: \prod_{i=1}^k\mathcal{D}_{R_{r_i}(t^{(i)})} c_{s,L}(\vec{M}) =  \mathcal{D}_{R_{\mathfrak{h}(\Psi^+)}(t)} c_{s,L}(\vec{M})\,,
    \]
    where $\lambda$ and $\vec{M}$ are related by \eqref{eq:map}.
\end{proof}

\begin{example}[continues=ex:decomp]
Let us just look at the first case with $\Psi^+ = \Phi_1^+ \sqcup \Phi_2^+$, where $ \Phi_1^+ = \{ \alpha_1 + \alpha_2, \alpha_3, \alpha_1+\alpha_2+\alpha_3\} $ and $ \Phi_2^+ \{\alpha_5\}$. The summation on the left-hand side of \eqref{eq:branching} should be over the smaller Young diagrams as illustrated in Figure~\ref{fig:branching} together with remaining rows that are unpicked.
\begin{figure}[!h]
\centering
\includegraphics[scale=0.65]{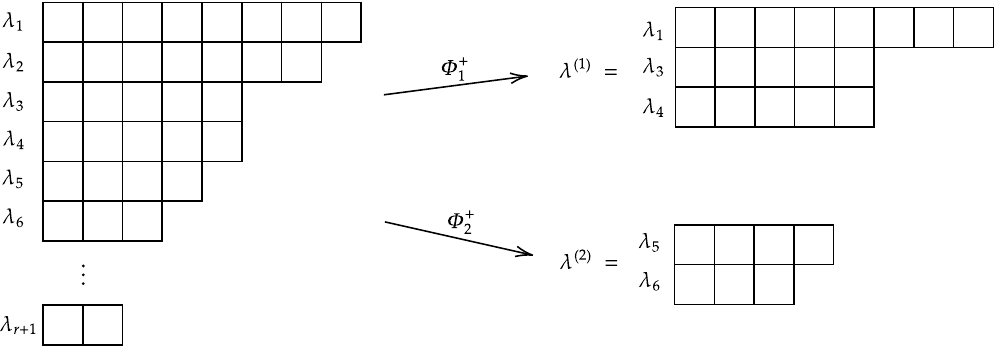}
\caption{An illustration of obtaining Young diagrams for subalgebras.}
\label{fig:branching}
\end{figure}

Then, in this case,
\[
    R_{\mathfrak{h}(\Psi^+)}(t) \:=\: \frac{1}{(1-t_1 t_2)(1-t_3)(1-t_1 t_2 t_3)} \frac{1}{(1-t_5)}\,,
\]
where the first factor corresponds to the Weyl character of the subalgebra component $\mathfrak{sl}(3)$ and the second one corresponds to the Weyl character of $\mathfrak{sl}(2)$. Then the multiplicity $\mu_{\lambda}^{\mathfrak{h}(\Psi^+)}$ is obtained by acting the shift operator associated with the above $R_{\mathfrak{h}(\Psi^+)}(t)$ on $c_{s,L}(\vec{M})$, where $M_i$ and $\lambda_i$ are related by \eqref{eq:map}.
\end{example}

The generalization to the case of the tensor product of different spins is straightforward. With the relation between $\lambda_i$ and $M_i$ given in \eqref{eq:map2}, we have
\begin{cor}\label{cor:branching}
\[
    \mu_{\lambda}^{\mathfrak{h}(\Psi^+)} \:=\: \mathcal{D}_{R_{\mathfrak{h}(\Psi^+)}(t)} c_{\vec{s}}(\vec{M})\,.
\]
\end{cor}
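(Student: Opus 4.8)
The plan is to reduce Corollary~\ref{cor:branching} to the already-proven Theorem~\ref{thm:branching} by noting that the entire argument of Theorem~\ref{thm:branching} never used any special feature of the equal-spin tensor power beyond two structural facts: that $\left[\chi_{(2s)}(x)\right]^L$ is a symmetric homogeneous polynomial in $x_1,\dots,x_{r+1}$, and that its monomial-expansion coefficients coincide with the occupancy coefficients under the dictionary \eqref{eq:map}. Both facts persist verbatim for the mixed-spin product $\prod_{i=1}^L \chi_{(2s_i)}(x)$, with the single bookkeeping change that the total degree is now $2|\vec{s}|$ rather than $2sL$, so that $M_0 = 2|\vec{s}|$ in \eqref{eq:map2}. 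The coefficients $c_{\vec{s}}(\vec{M})$ replace $c_{s,L}(\vec{M})$, and the analogue of Lemma~\ref{lem:equal}, namely $d_{\vec{s}}(\lambda) = c_{\vec{s}}(\vec{M})$, was already recorded in Section~\ref{sec:diffspin} just before Corollary~\ref{cor:diffspins}.

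Concretely, first I would observe that $\prod_{i=1}^L \chi_{(2s_i)}(x)$ is symmetric, since each factor $\chi_{(2s_i)}(x)$ is a Schur polynomial and hence symmetric, and products of symmetric polynomials are symmetric. This is the only property of the product that the proof of Theorem~\ref{thm:branching} actually invokes: the factorization step there, in which each Vandermonde factor $\Delta^{(i)}(x)$ is applied recursively as an operator turning symmetric functions into antisymmetric ones on the index block $I_i$, goes through unchanged because disjointness of the $I_i$'s and symmetry of the input polynomial are all that is needed. Second, I would extract the coefficient of the monomial $\prod_{m\in\hat{I}} x_m^{\lambda_m}\prod_{i=1}^k\prod_{j=1}^{r_i+1} x_{i_j}^{\lambda_{i_j}+r_i+1-j}$ in $\prod_{i=1}^k \Delta^{(i)}(x)\cdot \prod_{i=1}^L\chi_{(2s_i)}(x)$, exactly as in the proof of Theorem~\ref{thm:branching}, yielding the mixed-spin analogue of \eqref{eq:thm3},
\[
\mu^{\mathfrak{h}(\Psi^+)}_\lambda \:=\: \sum_{\sigma\in W(\mathfrak{h})} {\rm sgn}(\sigma)\, c_{\vec{s}}\!\left(M_j - \sum_{i=1}^{j}(\sigma(i)-i)\right),
\]
using $d_{\vec{s}}(\lambda) = c_{\vec{s}}(\vec{M})$ to pass from monomial-expansion coefficients to occupancy coefficients.

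Finally, I would repackage this alternating sum as a shift operator exactly as in Section~\ref{sec:shift}. Since $W(\mathfrak{h}) = S_{r_1+1}\times\cdots\times S_{r_k+1}$ and the Weyl denominator of $\mathfrak{h}(\Psi^+)$ factorizes over its nonabelian components, Lemma~\ref{lem:arweylid} identifies each block's alternating sum with the polynomial $R_{r_i}(t^{(i)})$, and the factorization Lemma~\ref{lem:factor} assembles the product of block operators into the single operator $\mathcal{D}_{R_{\mathfrak{h}(\Psi^+)}(t)}$. This gives
\[
\mu^{\mathfrak{h}(\Psi^+)}_\lambda \:=\: \mathcal{D}_{R_{\mathfrak{h}(\Psi^+)}(t)}\, c_{\vec{s}}(\vec{M}),
\]
as claimed. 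I do not anticipate a genuine obstacle here, since the proof is essentially a transcription of Theorem~\ref{thm:branching} with $c_{s,L}\mapsto c_{\vec{s}}$ and $2sL\mapsto 2|\vec{s}|$; the only point requiring mild care is confirming that Remark~\ref{rmk:order}'s self-organization of out-of-range $M_i$ still holds, which it does because it depends only on the symmetry of the underlying polynomial and not on whether the spins are equal.
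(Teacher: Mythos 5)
Your proposal is correct and is essentially the paper's own argument: the paper states this corollary as the ``straightforward'' generalization of Theorem~\ref{thm:branching}, obtained by rerunning that proof with $\prod_{i=1}^L \chi_{(2s_i)}(x)$ in place of $\left[\chi_{(2s)}(x)\right]^L$, invoking the mixed-spin identity $d_{\vec{s}}(\lambda) = c_{\vec{s}}(\vec{M})$ from Section~\ref{sec:diffspin} and the relation \eqref{eq:map2}, then repackaging the alternating sum over $W(\mathfrak{h})$ via Lemma~\ref{lem:factor} into the operator $\mathcal{D}_{R_{\mathfrak{h}(\Psi^+)}(t)}$. Your write-up simply makes explicit the transcription the paper leaves implicit, including the correct identification of the only two properties (symmetry and the coefficient dictionary) that the original proof actually uses.
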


\vspace{8mm}

\section{Conjecture for A-type Lie superalgebras}\label{sec:super} 
Let us consider the A-type Lie superalgebra case as explored in \cite{Shu:2022vpk,Shu:2024crv}. Take $\mathfrak{g} = \mathfrak{sl}(m|n)$, and its positive roots $\Phi^{+}$ consist of two parts, $\Phi^+ = \Phi_0^+ \cup \Phi_1^+$, where $\Phi_0^+$ and $\Phi_1^+$ are the set of positive even roots and odd roots, respectively. The positive roots can be written in the standard basis \cite{VanderJeugt:1989ak,frappat:hal-00376660},
\[
\begin{aligned}
    & \Phi_0^+ \:=\: \left\{L_i - L_j | i<j,\ i,j = 1,\ldots,m \right\} \cup \left\{K_k - K_\ell | k<\ell,\ k,\ell = 1,\ldots,n  \right\}\, , \\
    & \Phi_1^+ \:=\: \left\{L_i - K_k | i = 1,\ldots,m,\ k = 1,\ldots, n \right\}\,,
\end{aligned}
\]
and they can be generated by the {\it Distinguished Simple Root System}
\[
\alpha_i \:=\: L_i - L_{i+1}\,,  \quad \beta_\ell \:=\: K_\ell - K_{\ell+1}\,, \quad  \delta \:=\: L_{m} - K_1\,,
\]
for $i=1,\ldots,m-1$ and $\ell = 1,\ldots, n-1$. 

\begin{figure}[!h]
    \centering
    \includegraphics[width=0.35\linewidth]{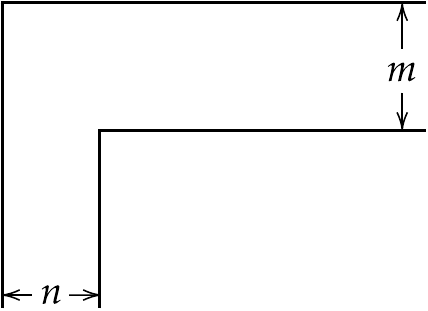}
    \caption{$(m,n)$-hook, constraining the Young diagrams for irreducible covariant tensor representations of $\mathfrak{sl}(m|n)$.}
    \label{fig:hook}
\end{figure}

In the following, we will only look at the covariant tensor representations $V_{\lambda}$ of $\mathfrak{sl}(m|n)$. $V_{\lambda}$ is an irreducible representation specified by a {\it Hook Young Diagram} $\lambda$ contained in the $(m,n)$-hook as shown in Figure~\ref{fig:hook}, and the character of $V_\lambda$ is given by the {\it Hook-Schur Function} due to \cite{Berele:1987yi}.
\begin{thm}[Berele-Regev]
    The character of $V_{\lambda}$ is given by the hook-Schur function $\HS_{\lambda}(x;y)$, which is defined by 
    \[
        \HS_{\lambda}(x;y) \:=\: \sum_{\tau<\lambda} S_{\lambda/\tau}(x) S_{\tau^\prime}(y)\,,
    \]
    where $x$ is short for $(x_1,\dots,x_m)$ and $y=(y_1,\dots,y_n)$ with $x_i$ and $y_\ell$ the formal variables defined as
    \[
        x_i \:=\: e^{L_i},\quad y_\ell \:=\: e^{K_\ell}\,.
    \]
    In the abvoe, $S_{\lambda}(x)$ is the Schur function, $\tau^\prime$ is the conjugate of $\tau$.
\end{thm}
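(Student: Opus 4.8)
The plan is to derive the identity from super Schur--Weyl duality, computing the character of $V^{\otimes L}$ in two ways, in close analogy with the classical realization of Schur functions as $\mathfrak{gl}$-characters. Work with the natural module $V=\mathbb{C}^{m|n}$, whose even basis vectors carry weights $L_1,\dots,L_m$ and whose odd basis vectors carry weights $K_1,\dots,K_n$, so that $\ch V = x_1+\dots+x_m+y_1+\dots+y_n$ on the diagonal element $g=\mathrm{diag}(x_1,\dots,x_m\,|\,y_1,\dots,y_n)$. First I would invoke the super double-centralizer theorem (Berele--Regev, Sergeev): the graded action of $\cS_L$ permuting the tensor factors of $V^{\otimes L}$ with Koszul signs commutes with the $\mathfrak{gl}(m|n)$-action, and the two algebras are mutual commutants, giving the bimodule decomposition
\[
V^{\otimes L}\;\cong\;\bigoplus_{\lambda}\,V_\lambda\boxtimes S^\lambda\,,
\]
where $\lambda$ runs over partitions of $L$ lying in the $(m,n)$-hook, $V_\lambda$ is the irreducible covariant module, and $S^\lambda$ is the Specht module. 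Restricting from $\mathfrak{gl}(m|n)$ to $\mathfrak{sl}(m|n)$ changes nothing in the character beyond an overall torus scaling, so it suffices to treat $\mathfrak{gl}(m|n)$; this decomposition is what confines the admissible shapes to the hook.

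The crucial computation is that of the mixed trace $\mathrm{tr}_{V^{\otimes L}}(g\cdot\sigma)$ for $\sigma\in\cS_L$. Standard cycle bookkeeping reduces this to a product over the cycles of $\sigma$, and transporting a basis vector once around a $k$-cycle produces a Koszul sign precisely when that vector is odd. Carrying out the sign count, I expect each $k$-cycle to contribute the \emph{super power sum}
\[
p_k(x;y)\;=\;\sum_{i=1}^m x_i^{\,k}+(-1)^{k-1}\sum_{\ell=1}^n y_\ell^{\,k}\,,
\]
so that $\mathrm{tr}_{V^{\otimes L}}(g\sigma)=\prod_k p_k(x;y)^{\,m_k(\sigma)}$, with $m_k(\sigma)$ the number of $k$-cycles. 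I expect this sign tracking to be the main obstacle: it is exactly where the odd sector departs from the purely even ($n=0$) case, and it is what forces the alternating factor $(-1)^{k-1}$ on the $y$-power sums; getting it wrong would spoil the identification below.

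Next I would feed this into the Frobenius character formula \eqref{eq:fcf}. Since the trace is literally a product of (super) power sums, the same symmetric-function manipulation that proves \eqref{eq:fcf} applies verbatim after the substitution $P_j(x)\mapsto p_j(x;y)$, yielding
\[
\mathrm{tr}_{V^{\otimes L}}(g\sigma)\;=\;\sum_{\lambda}\chi_\lambda(C_{\mathbf i})\,S_\lambda\big(p_\bullet\!\mapsto\! p_\bullet(x;y)\big)\,,
\]
where $\mathbf i$ is the cycle type of $\sigma$. Comparing this with the evaluation of the same trace coming from the bimodule decomposition, namely $\sum_\lambda \ch(V_\lambda)\,\chi_\lambda(C_{\mathbf i})$, and using the linear independence of the irreducible $\cS_L$-characters (equivalently, the orthogonality relations already used in the proof of Theorem~\ref{thm:main1}), I would conclude
\[
\ch(V_\lambda)\;=\;S_\lambda\big(p_\bullet\!\mapsto\! p_\bullet(x;y)\big)\,,
\]
that is, the character is the image of the Schur function under the super power-sum specialization.

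It remains to identify this specialized Schur function with the combinatorial expression $\sum_{\tau\subseteq\lambda}S_{\lambda/\tau}(x)\,S_{\tau'}(y)$ in the statement. This is a pure identity in the ring of symmetric functions: the super specialization is the algebra map $p_k\mapsto p_k(x)+(-1)^{k-1}p_k(y)$, which factors as $(\mathrm{ev}_x\otimes(\mathrm{ev}_y\circ\omega))\circ\Delta$, where $\Delta$ is the coproduct and $\omega$ the standard involution with $\omega(p_k)=(-1)^{k-1}p_k$ and $\omega(s_\tau)=s_{\tau'}$. Applying this to $s_\lambda$ and using $\Delta s_\lambda=\sum_\tau s_{\lambda/\tau}\otimes s_\tau$ produces exactly $\sum_\tau S_{\lambda/\tau}(x)\,S_{\tau'}(y)$, as claimed. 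As an independent check, one could instead argue combinatorially, exhibiting a weight basis of $V_\lambda$ indexed by $(m,n)$-semistandard (super) tableaux of shape $\lambda$ and splitting each tableau into its unprimed and primed parts, which reproduces the same sum.
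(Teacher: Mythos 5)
The paper offers no proof of this statement at all: it is quoted as a known theorem of Berele and Regev, with \cite{Berele:1987yi} cited as the source, so there is no internal argument to compare yours against. Judged on its own terms, your outline is a correct reconstruction of the standard proof of that theorem. The Koszul-sign bookkeeping is right: cycling $k$ odd vectors costs the sign of a $k$-cycle, namely $(-1)^{k-1}$, so a $k$-cycle contributes the super power sum $\sum_{i} x_i^k + (-1)^{k-1}\sum_{\ell} y_\ell^k$ to the mixed trace. Feeding this into the Frobenius identity $p_\mu = \sum_\lambda \chi_\lambda(C_\mu)\, s_\lambda$, viewed as an identity in the ring of symmetric functions and then specialized, and comparing with the trace computed from the bimodule decomposition, does give $\ch V_\lambda = s_\lambda$ evaluated at the super power sums, by linear independence of the irreducible $\mathcal{S}_L$-characters. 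The final identification of this specialization with $\sum_{\tau\subseteq\lambda} S_{\lambda/\tau}(x)\,S_{\tau'}(y)$ via $\Delta s_\lambda=\sum_\tau s_{\lambda/\tau}\otimes s_\tau$, $\omega(p_k)=(-1)^{k-1}p_k$ and $\omega(s_\tau)=s_{\tau'}$ is also the standard one. Two caveats are worth recording. First, the super double-centralizer theorem you invoke is not an off-the-shelf tool logically independent of the statement: it is the core result of the very paper \cite{Berele:1987yi} being cited (together with Sergeev's work), and its proof --- semisimplicity of the signed $\mathcal{S}_L$-action on $V^{\otimes L}$ and the fact that precisely the $(m,n)$-hook shapes occur --- is where the real work lies; your argument correctly derives the character formula from it, but it should be cited or proved rather than treated as free. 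Second, your remark that restriction from $\mathfrak{gl}(m|n)$ to $\mathfrak{sl}(m|n)$ only changes an overall torus scaling is unproblematic when $m\neq n$ (the two differ by a central $\mathfrak{gl}(1)$), but at $m=n$ the identity matrix is supertraceless and lies inside $\mathfrak{sl}(n|n)$, so that step deserves a word of justification there.
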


As in the ordinary A-type Lie algebra case, we are interested in considering the decomposition of a tensor power of $V_{(\ell)}$, namely,
\begin{equation}
    V_{(2s)}^{\otimes L} \:=\: \oplus_{\lambda}\,\mu_{\lambda}\, V_{\lambda}\,.
\end{equation}
In terms of character, 
\begin{equation}
\label{eq:superdecomp}
    \left[\HS_{(2s)}(x;y)\right]^{L} \:=\: \sum_{\lambda}\, \mu_{\lambda}\, \HS_{\lambda}(x;y)\,.
\end{equation}
Our difference formula can be directly generalized to this Lie superalgebra case by introducing the shift operator associated with an analogy of the Weyl denominator,
\begin{equation}
\label{eq:superdeno}
    R_{\mathfrak{sl}(m|n)}(t) \:=\: \frac{\prod_{\alpha\in \Phi_0^+} (1-t^{\alpha})}{\prod_{\alpha\in \Phi_1^+ }(1+t^{\alpha})}\,.
\end{equation}
where $t^\alpha :=  \big(\prod_{i=1}^{m-1} t_i^{m_i }\big) t_{m}^{k}\big(\prod_{\ell=1}^{n-1} t_{\ell+m}^{n_{\ell}}\big) $ for a general positive root 
\[
\alpha \:=\: \sum_{i=1}^{m-1} m_i \alpha_i + k \delta + \sum_{\ell=1}^{n-1} n_\ell \beta_\ell\,,
\]
and the formal variables $\{t_I\}_{I=1}^{m+n+1}$ are defined associating with each simple root as below (a straightforward generalization of the ordinary case as discussed before),
\[
    t_i \:=\: e^{-\alpha_i}\,,\quad t_m \:=\: e^{-\delta}\,, \quad t_{\ell+m} \:=\: e^{-\beta_\ell}\,,
\]
for $i=1,\dots,m-1$ and $\ell =1,\dots, n-1$.

In addition to the expansion by irreducible modules $V_\lambda$ as \eqref{eq:superdecomp}, one can also expand
\begin{equation}
\label{eq:superexpan}
    \left[\HS_{(2s)}(x;y)\right]^{L} \:=\: \sum_{\vec{M}} c_{s,L}(\vec{M}) \prod_{i=1}^m x_i^{M_{i-1}-M_i} \prod_{\ell=1}^n y_\ell^{M_{m+\ell-1}-M_{m+\ell}}\,,
\end{equation}
with $M_0 = 2sL$ and $M_{m+n} = 0$. As stated in Conjecture~\ref{conj:super}, applying the shifted operator associated with $R_{\mathfrak{sl}(m|n)}(t)$ to the coefficients $c_{s,L}(\vec{M})$ will give us the tensor product multiplicities $\mu_{\lambda}$ in \eqref{eq:superdecomp}, where $\lambda$ and $\vec{M}$ are related by \eqref{eq:mapsuper}. In the following, we give a proof for the special case of $\mathfrak{sl}(1|1)$ and provide some numerical evidence for cases of $\mathfrak{sl}(2|1)$ and $\mathfrak{sl}(1|2)$.

\subsection{Example - $\mathfrak{sl}(1|1)$} The root system of $\mathfrak{sl}(1|1)$ is rather simple, which only consists of two odd roots, and the simple root can be taken as $\delta = L - K$.

   Consider the Lie superalgebra $\mathfrak{sl}(1|1)$ and the tensor power of the representation $(2s)$, corresponding to the one-row Young diagram of $2s$ boxes. When $s=1/2$, it becomes the fundamental representation. The character of the representation $(2s)$ is
   \[
    \HS_{(2s)}(x;y) \:=\: x^{2s}+x^{2s-1} y\,.
   \]
   The $L$-th power of $\HS_{(2s)}(x;y)$ has the expansion
   \[
   \left[\HS_{(2s)}(x;y)\right]^L \:=\: \sum_{M=0}^{L}c_{s,L}(M)\, x^{2sL-M} y^M\,,
   \]
   with $c_{s,L}(M) = \binom{L}{M}$, which is independent of $s$. While the expansion over the irreducible covariant tensor modules $V_{\lambda}$ is given as
   \[
   \left[\HS_{(2s)}(x;y)\right]^L \:=\: \sum_{\lambda}\mu_{\lambda}\, \HS_{\lambda}(x;y)\,,
   \]
   with $\lambda$ contained in the $(1,1)$-hook, as shown in Figure~\ref{fig:hookdiagram}. For a given generic $\lambda(s,M):= (2sL-M, 1, \dots, 1)$ with $M$ 1's and $2sL>M$, or equivalently $\lambda_1 = 2sL-M$ and $\lambda_2^\prime = M$, one can straightforwardly find that
   \[
    \HS_{(2sL-M,1,\dots,1)}(x;y) \:=\: x^{2sL-M} y^M + x^{2sL-M-1} y^{M+1}\,.
   \]
   \begin{figure}[t!]
   \begin{minipage}{0.3\textwidth}
    \centering
    \includegraphics[width=0.9\linewidth]{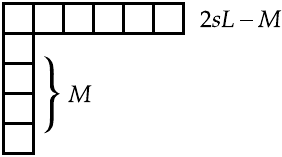}
   \end{minipage}
   \hfill
   \begin{minipage}{0.3\textwidth}
    \centering
    \includegraphics[width=0.9\linewidth]{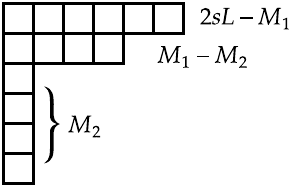}
   \end{minipage}
   \hfill
   \begin{minipage}{0.3\textwidth}
    \centering
    \includegraphics[width=0.9\linewidth]{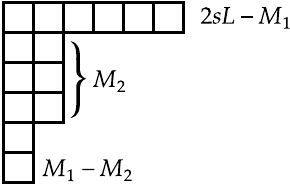}
   \end{minipage}
   \caption{Examples of Young diagrams in the $(1,1)$-hook, $(2,1)$-hook and $(1,2)$-hook.}
   \label{fig:hookdiagram}
   \end{figure}
    Comparing two expansions, one shall obtain that
    \[
        \binom{L}{M} \:=\: c_{s,L}(M) \:=\: \mu_{\lambda(s,M)} + \mu_{\lambda(s,M-1)}\,,
    \]
    with initial condition $\mu_{\lambda(s,M=0)} = 1$, and therefore one can conclude
    \[
        \mu_{\lambda(s, M)} \:=\: \sum_{i=0}^{\infty}(-1)^i\binom{L}{M-i}\,,
    \]
    and we adopt the convention that $\binom{L}{M} = 0$ for $M>L$ and $M<0$.
    
    This is consistent with our conjectured formula. By our definition,
    \[
    R_{\mathfrak{sl}(1|1)} \:=\: \frac{1}{1+t} \:=\: \sum_{i=0}^{\infty} (-1)^i t^i\,,
    \]
    and so the associated shifted operator leads to 
    \[
    \mathcal{D}_{R_{\mathfrak{sl}(1|1)}(t)} c_{s,L}(M) \:=\: \sum_{i=0}^{\infty} (-1)^i c_{s,L}(M-i) \:=\: \sum_{i=0}^{\infty}(-1)^i\binom{L}{M-i}\,.
    \]

    In this sense, we can conclude that the above derivation provides a proof for our conjectured formula in the case of $\mathfrak{sl}(1|1)$.
    
\subsection{Example - $\mathfrak{sl}(2|1)$}
Let us look at a slightly more nontrivial example $\mathfrak{sl}(2|1)$ and compare its results with those of $\mathfrak{sl}(1|2)$. The distinguished simple roots for $\mathfrak{sl}(2|1)$ can be taken as
    \[
        \alpha \:=\: L_1 - L_2, \quad \delta = L_2 - K\,.
    \]

In the representation $\lambda = (2s)$, the hook-Shur function for $\mathfrak{sl}(2|1)$ is given by
    \[
        \HS_{(2s)}(x_1,x_2;y) \:=\: S_{(2s)}(x_1,x_2) + y S_{(2s-1)}(x_1,x_2)\,,
    \]
    where $S_{\lambda}(x_1,x_2)$ is the Schur function. In particular, when $s=1/2$, 
    \[
        \HS_{(1)}(x_1,x_2;y) \:=\: x_1 + x_2 + y\,.
    \]
    The $L$-th tensor power of $\HS_{(1)}(x_1,x_2;y)$ can be written as
    \[
    \begin{aligned}
        \left[\HS_{(1)}(x_1,x_2;y)\right]^L  &\:=\: \sum_{0\leq M_2 \leq M_1 \leq L} c_{1/2,L}(M_1,M_2)\, x_1^{L-M_1}x_2^{M_1 -M_2} y^{M_2}\,. \\
        &\:=\: \sum_{\lambda} \mu_\lambda\, \HS_{\lambda}(x_1,x_2;y) \\
    \end{aligned}
    \]
    In the above equations, it is straightforward to find out that
    \begin{equation}
    \label{eq:21ExpCoeff}
        c_{1/2,L}(M_1,M_2) \:=\: \binom{L}{M_1}\binom{M_1}{M_2}\,,
    \end{equation}
    and $\lambda$ in the second summation are $(2,1)$-Young diagrams with total $2sL$ boxes as shown in Figure~\ref{fig:hookdiagram}. Note that
    \begin{equation}\label{eq:sl21factor}
    \begin{aligned}
        R_{\mathfrak{sl}(2|1)} &\:=\: \frac{1-t_1}{(1+t_2)(1+t_1 t_2)}\\
        &\:=\: \sum_{k,\ell=0}^{\infty} (-1)^{k+\ell} t_1^k t_2^{k+\ell} - \sum_{k,\ell=0}^{\infty} (-1)^{k+\ell} t_1^{k+1} t_2^{k+\ell} \\
        &\:=\: \sum_{\ell=0}^{\infty} (-1)^\ell t_2^{\ell} - \sum_{\ell=0}^{\infty} (-1)^{\ell} t_1^{\ell+1} t_2^{\ell}\,,
    \end{aligned}
    \end{equation}
    where the third equation is due to many cancelations in the second line, then the associated shifted operator acting on $c_{1/2,L}(M_1,M_2)$ gives 
    \begin{equation}
    \label{eq:21diff}
    \begin{aligned}
        &\mathcal{D}_{R_{\mathfrak{sl}(2|1)}} c_{1/2,L}(M_1,M_2) \\
        &\:=\: \sum_{\ell=0}^{\infty} (-1)^\ell \binom{L}{M_1}\binom{M_1}{M_2 - \ell} - \sum_{\ell=0}^{\infty}(-1)^{\ell} \binom{L}{M_1-\ell-1}\binom{M_1-\ell-1}{M_2 - \ell}\\
        &\:=\: \binom{L}{M_1} \binom{M_1 - 1}{M_2} - \binom{L}{M_1-M_2-1} \binom{L-M_1 + M_2}{M_2}
    \end{aligned}
    \end{equation}
    where in the last line we have used the finite-binomial identity 
    \[
        \sum_{k=0}^{r} (-1)^k \binom{n}{k} \:=\: (-1)^r \binom{n-1}{r} \,,
    \]
    for $0\leq r\leq n-1$ and the binomial identity 
    \[
        \binom{n}{k}\binom{k}{r} \:=\: \binom{n}{r}\binom{n-r}{k-r}\,.
    \]
    
    Let us examine the specific case where $L=6$. Our conjectured formula, applied in this case as Equation~\eqref{eq:21diff}, gives 
    {
    \begin{center}
    \renewcommand{\arraystretch}{1.2}
    \begin{tabular}{c | c | c} 
    \toprule
        $(M_1,M_2)$ & $\lambda=(2sL-M_1,M_1-M_2,\overbrace{1,\dots,1}^{M_2})$ & $\mathcal{D}_{R_{\mathfrak{sl}(2|1)}} c_{1/2,L}(M_1,M_2)$\\ \hline
        $(0,0)$     &$(6)$      & $1$ \\ 
        $(1,0)$     &$(5,1)$      & $5$ \\
        $(2,0)$     &$(4,2)$      & $9$ \\
        $(2,1)$     &$(4,1,1)$      & $10$ \\
        $(3,0)$     &$(3,3)$      & $5$ \\
        $(3,1)$     &$(3,2,1)$      & $16$ \\
        $(3,2)$     &$(3,1,1,1)$    & $10$ \\
        $(4,2)$     &$(2,2,1,1)$    & $9$ \\
        $(4,3)$     &$(2,1,1,1,1)$  & $5$ \\
        $(5,4)$     &$(1,1,1,1,1,1)$& $1$ \\
    \bottomrule
    \end{tabular}
    \end{center}
    }
    
    The most left column in the above table, which is obtained from the conjectured formula, agrees with the expansion coefficients $\mu_{\lambda}$ in hook-Schur functions
    \begin{equation}\label{eq:21expan}
    \begin{aligned}
        & \left[\HS_{(1)}(x_1,x_2;y)\right]^6 \\
        & \quad \:=\: \HS_{(6)}(x_1,x_2;y) + 5 \HS_{(5,1)}(x_1,x_2;y) + 9 \HS_{(4,2)}(x_1,x_2;y) \\
        & \qquad + 10 \HS_{(4,1,1)}(x_1,x_2;y) + 5 \HS_{(3,3)}(x_1,x_2;y) + 16 \HS_{(3,2,1)}(x_1,x_2;y) \\
        & \qquad + 10 \HS_{(3,1,1,1)}(x_1,x_2;y) + 9 \HS_{(2,2,1,1)}(x_1,x_2;y)  \\
        & \qquad + 5 \HS_{(2,1,1,1,1)}(x_1,x_2;y) + \HS_{(1,1,1,1,1,1)}(x_1,x_2;y)\,.
    \end{aligned}
    \end{equation}
    
    \subsection{Example - $\mathfrak{sl}(1|2)$} Note that the Lie superalgebra $\mathfrak{sl}(1|2)$ is isomorphic to $\mathfrak{sl}(2|1)$. This fact is also reflected in the decomposition of their tensor power representation. In the case of $\mathfrak{sl}(1|2)$, $\HS_{(1)}(x;y_1,y_2) = x + y_1 + y_2$ and so
    \[
        \left[\HS_{(1)}(x;y_1,y_2)\right]^L \:=\: \sum_{0\leq M_2 \leq M_1 \leq L} c_{1/2,L}(M_1,M_2) x^{L-M_1} y_1^{M_1-M_2} y_2^{M_2}\,.
    \]
    and the coefficients $c_{1/2,L}(M_1,M_2)$ share the same expression as \eqref{eq:21ExpCoeff}. However, in this case, the shift operator $\mathcal{D}_{R_{\mathfrak{sl}(1|2)}}$ we should apply is determined by
    \begin{equation}
    \begin{aligned}
        R_{\mathfrak{sl}(1|2)} \:=\: \frac{1-t_2}{(1+t_1)(1+t_1 t_2)} = \sum_{\ell=0}^\infty (-1)^\ell t_1^{\ell} - \sum_{\ell=0}^{\infty} (-1)^\ell t_1^{\ell} t_2^{\ell+1}\,.
    \end{aligned}
    \end{equation}
    and our conjectured difference formula gives
    \begin{equation}
    \label{eq:12diff}
    \begin{aligned}
        &\mathcal{D}_{R_{\mathfrak{sl}(1|2)}} c_{1/2,L}(M_1,M_2) \\
        &\:=\: \sum_{\ell=0}^{\infty} (-1)^\ell \binom{L}{M_1-\ell}\binom{M_1-\ell}{M_2} - \sum_{\ell=0}^{\infty}(-1)^{\ell} \binom{L}{M_1-\ell}\binom{M_1-\ell}{M_2 - \ell-1}\\
        &\:=\: \binom{L}{M_2} \binom{L-M_2-1}{M_1-M_2} - \binom{L}{M_1-M_2+1} \binom{L-M_1 + M_2-2}{M_2-1} \,.
    \end{aligned}
    \end{equation}

    Let $L=6$. The Young diagram $\lambda$ is related to $(L,M_1,M_2)$ as in the setup of Conjecture~\ref{conj:super}. More specifically,
    \[
        \lambda_1 \:=\: L-M_1,\quad, \tilde{\lambda}_1 \:=\: M_1 - M_2, \quad \tilde{\lambda}_2 \:=\: M_2\,.
    \]
    Then, the conjectured formula gives
    \begin{center}
    \renewcommand{\arraystretch}{1.2}
    \begin{tabular}{c | c | c}
    \toprule
        $(M_1,M_2)$ & $\lambda$ & $\mathcal{D}_{R_{\mathfrak{sl}(1|2)}} c_{1/2,L}(M_1,M_2)$\\ \hline
        $(0,0)$     &$(6)$          & $1$ \\ 
        $(1,0)$     &$(5,1)$        & $5$ \\
        $(2,0)$     &$(4,1,1)$      & $10$ \\
        $(2,1)$     &$(4,2)$        & $9$ \\
        $(3,0)$     &$(3,1,1,1)$    & $10$ \\
        $(3,1)$     &$(3,2,1)$      & $16$ \\
        $(4,0)$     &$(2,1,1,1,1)$  & $5$ \\
        $(4,1)$     &$(2,2,1,1)$    & $9$ \\
        $(4,2)$     &$(2,2,2)$      & $5$ \\
        $(6,6)$     &$(1,1,1,1,1,1)$& $1$ \\
    \bottomrule
    \end{tabular}
    \end{center}
    and this table is consistent with the expansion in terms of hook-Schur functions,
    \begin{equation}\label{eq:12expan}
    \begin{aligned}
        &\left[\HS_{(1)}(x;y_1,y_2)\right]^6 \\
        &\quad \:=\: \HS_{(6)}(x;y_1,y_2) + 5 \HS_{(5,1)}(x;y_1,y_2) + 9 \HS_{(4,2)}(x;y_1,y_2) \\
        & \qquad + 10 \HS_{(4,1,1)}(x;y_1,y_2) + 16 \HS_{(3,2,1)}(x;y_1,y_2) + 10 \HS_{(3,1,1,1)}(x;y_1,y_2) \\
        & \qquad + 5 \HS_{(2,1,1,1,1)}(x;y_1,y_2) + 9 \HS_{(2,2,1,1)}(x;y_1,y_2) + 5 \HS_{(2,2,2)}(x;y_1,y_2) \\
        & \qquad + \HS_{(1,1,1,1,1,1)}(x;y_1,y_2)\,.
    \end{aligned}
    \end{equation}

    \vspace{4mm}
    
    \noindent \textbf{$\mathfrak{sl}(2|1)$~v.s.~$\mathfrak{sl}(1|2)$.} As an aside, one can compare the expansions \eqref{eq:21expan} and \eqref{eq:12expan}, which are related by the conjugation of Young diagrams. Recall the identity of the hook-Schur functions associated with mutual conjugate Young diagrams. Suppose that $\lambda$ is a $(m,n)$-Young diagram and $\lambda^\prime$ is its conjugation, then
    \begin{thm}[Theorem~6.13 in \cite{Berele:1987yi}]
        \[
            \HS_{\lambda}(x_1,\dots,x_m;y_1,\dots,y_n) \:=\: \HS_{\lambda^\prime}(y_1,\dots,y_n;x_1,\dots,x_m)\,.
        \]
    \end{thm}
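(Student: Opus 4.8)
The plan is to work first in the ring $\Lambda_x\otimes\Lambda_y$ of functions that are separately symmetric in two a priori infinite alphabets $x$ and $y$, prove the identity there, and only afterwards specialize to the finite alphabets $x=(x_1,\dots,x_m)$ and $y=(y_1,\dots,y_n)$; since specialization is a ring homomorphism, the finite-variable statement of the theorem follows automatically. The advantage is that in $\Lambda_x\otimes\Lambda_y$ the products $\{S_\alpha(x)\,S_\beta(y)\}$, as $\alpha,\beta$ range over all partitions, form a basis, so it suffices to compare the coefficient of each $S_\alpha(x)\,S_\beta(y)$ on the two sides of the claimed identity.

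First I would expand the left-hand side. Starting from the definition $\HS_\lambda(x;y)=\sum_{\tau\subseteq\lambda}S_{\lambda/\tau}(x)\,S_{\tau'}(y)$ and applying the Littlewood--Richardson rule $S_{\lambda/\tau}(x)=\sum_\alpha c^{\lambda}_{\tau\alpha}\,S_\alpha(x)$, the factor $S_{\tau'}(y)$ forces $\tau'=\beta$, i.e.\ $\tau=\beta'$, so the coefficient of $S_\alpha(x)\,S_\beta(y)$ in $\HS_\lambda(x;y)$ is exactly $c^{\lambda}_{\beta'\alpha}$. Next I would expand the right-hand side $\HS_{\lambda'}(y;x)=\sum_{\sigma\subseteq\lambda'}S_{\lambda'/\sigma}(y)\,S_{\sigma'}(x)$ in the same basis, using $S_{\lambda'/\sigma}(y)=\sum_\gamma c^{\lambda'}_{\sigma\gamma}\,S_\gamma(y)$; here extracting $S_\alpha(x)\,S_\beta(y)$ forces $\sigma'=\alpha$ (so $\sigma=\alpha'$) and $\gamma=\beta$, giving the coefficient $c^{\lambda'}_{\alpha'\beta}$.

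It then remains to show $c^{\lambda}_{\beta'\alpha}=c^{\lambda'}_{\alpha'\beta}$, and this is where I would invoke two standard symmetries of the Littlewood--Richardson coefficients: the conjugation symmetry $c^{\lambda}_{\mu\nu}=c^{\lambda'}_{\mu'\nu'}$ and the interchange symmetry $c^{\lambda}_{\mu\nu}=c^{\lambda}_{\nu\mu}$. Applying the first with $\mu=\beta'$, $\nu=\alpha$ gives $c^{\lambda}_{\beta'\alpha}=c^{\lambda'}_{\beta\alpha'}$, and the second then yields $c^{\lambda'}_{\beta\alpha'}=c^{\lambda'}_{\alpha'\beta}$. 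This matches the two coefficient extractions and closes the argument. The only genuinely delicate points are bookkeeping ones: keeping the sub/superscript conventions for $c^{\lambda}_{\mu\nu}$ straight across the two expansions and confirming that passing to finitely many variables loses no information (which is precisely why I would prove the identity first in $\Lambda_x\otimes\Lambda_y$). I expect the main obstacle to be organizational rather than conceptual, namely arranging the two coefficient extractions to land on exactly matching Littlewood--Richardson symbols.

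As a conceptual shortcut I would note an alternative one-line proof: one realizes $\HS_\lambda(x;y)=\phi(S_\lambda)$ for the super power-sum homomorphism $\phi\colon p_k\mapsto p_k(x)+(-1)^{k-1}p_k(y)$ (the sum-over-$\tau$ formula being exactly this image), and observes that $\phi\circ\omega=(\text{swap }x\leftrightarrow y)\circ\phi$ since $\omega(p_k)=(-1)^{k-1}p_k$. Applying this identity of homomorphisms to $S_\lambda$ and using $\omega(S_\lambda)=S_{\lambda'}$ immediately yields $\HS_{\lambda'}(x;y)=\HS_\lambda(y;x)$, which is the stated identity after relabeling $\lambda\leftrightarrow\lambda'$. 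Equivalently, there is a transparent combinatorial proof: transposing a hook (super) tableau of shape $\lambda$ while interchanging the primed and unprimed alphabets is a weight-preserving bijection onto hook tableaux of shape $\lambda'$ that exchanges the roles of $x$ and $y$, since the row/column and weak/strict conditions are swapped in tandem.
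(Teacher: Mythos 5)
Your proposal is correct, but note that the paper does not prove this statement at all: it is imported verbatim as Theorem~6.13 of Berele--Regev and used purely as a black box to explain why the expansions \eqref{eq:21expan} and \eqref{eq:12expan} are related by conjugating diagrams and swapping alphabets. So what you have done is supply a proof that the paper omits. Your main argument goes through: in $\Lambda_x\otimes\Lambda_y$ the coefficient of $S_\alpha(x)S_\beta(y)$ on the left-hand side is $c^{\lambda}_{\beta'\alpha}$ and on the right-hand side is $c^{\lambda'}_{\alpha'\beta}$, and the chain $c^{\lambda}_{\beta'\alpha}=c^{\lambda'}_{\beta\alpha'}=c^{\lambda'}_{\alpha'\beta}$ (conjugation symmetry, then commutativity) closes the gap; the specialization step is harmless because the defining sum over $\tau\subseteq\lambda$ is finite and evaluation at $(x_1,\dots,x_m,0,\dots;\,y_1,\dots,y_n,0,\dots)$ is a ring homomorphism carrying the infinite-alphabet expression to exactly the paper's finite-variable definition of $\HS_\lambda$. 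Of your two shortcuts, the $\omega$-involution argument (checking $\phi\circ\omega=(\text{swap }x\leftrightarrow y)\circ\phi$ on the generators $p_k$ and applying it to $S_\lambda$) is the cleanest and is fully rigorous as stated; the tableau-transposition bijection is essentially the original Berele--Regev route, since their hook-Schur functions are weight generating functions of $(m,n)$-semistandard hook tableaux, and transposition interchanges the weak/strict row and column conditions in tandem with the two alphabets. Any of the three arguments would serve as a self-contained replacement for the citation.
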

    It is then straightforward to conclude that the two expansions \eqref{eq:21expan} and \eqref{eq:12expan} are related by sending $(x_1, x_2)$ to $(y_1,y_2)$, $y$ to $x$ and $(2,1)$-diagrams to $(1,2)$-diagrams by conjugation. This identification of the decomposition of tensor powers echoes the isomorphism between $\mathfrak{sl}(2|1)$ and $\mathfrak{sl}(1|2)$.

    \subsection{Branching rule} The Corollary~\ref{cor:branching} can also be generalized in the Lie superalgebra case to be Conjecture~\ref{conj:superbranching}. If we consider a subalgebra $\mathfrak{h} \subset \mathfrak{sl}(m|n)$, and, as before, $\mathfrak{h}$ can be specified by a subset of positive roots, $\Psi^+ \subset \Phi^+(\mathfrak{sl}(m|n))$, which in general can be written as the union of even and odd roots as $\Psi^+ = \Psi_0^+\sqcup \Psi_1^+$. We will also consider expansion \eqref{eq:branchexpan}, but change $A_r$ to $\mathfrak{sl}(m|n)$, and compute multiplicities $\mu_\lambda^{\mathfrak{h}(\Psi^+)}$. Correspondingly, we shall introduce
    \begin{equation}
    \label{eq:superdenosub}
        R_{\mathfrak{h}(\Psi^+)}(t) \:=\: \frac{\prod_{\alpha\in \Psi_0^+} (1-t^{\alpha})}{\prod_{\alpha\in \Psi_1^+ }(1+t^{\alpha})}\,.
    \end{equation}
    Then, under the relation between $\lambda_i$'s and $M_i$'s as \eqref{eq:mapsuper}, we further conjecture that the multiplicity $\mu_\lambda^{\mathfrak{h}(\Psi^+)}$ can be computed from the coefficients $c_{s,L}(\vec{M})$ in \eqref{eq:superexpan} by the formula in Conjecture~\ref{conj:superbranching}, namely by applying the shifted operator associated with $R_{\mathfrak{h}(\Psi^+)}(t)$ as defined in \eqref{eq:superdenosub}.

    In the following, we investigate Conjecture~\ref{conj:superbranching} by looking at the subalgebras of $\mathfrak{sl}(2|1)$ and provide a proof of the conjecture in the special case of $\mathfrak{sl}(m)\subset \mathfrak{sl}(m|1)$.

    \subsubsection{Subalgebras of $\mathfrak{sl}(2|1)$} The Lie superalgebra $\mathfrak{sl}(2|1)$ has three positive roots 
    \[
    \{\alpha \:=\: L_1-L_2, \delta \:=\: L_2 - K, \beta \:=\: L_1 - K\}\,,
    \]
    and it has three nontrivial subalgebras, $\mathfrak{sl}(2)$, $\mathfrak{sl}(1|1)_1$ and $\mathfrak{sl}(1|1)_2$, which are determined by taking $\Psi^+ = \{\alpha\},\ \{\delta\}$ and $\{\beta\}$, respectively. Here, we have used the subscript $1$ or $2$ to distinguish two different embeddings of $\mathfrak{sl}(1|1) \hookrightarrow \mathfrak{sl}(2|1)$, and this point is more clear from the choice of positive roots.
    
    \vspace{2mm}
    
    \paragraph{$\mathbf{\mathfrak{sl}(2|1) \supset \mathfrak{sl}(2)}$} This subalgebra $\mathfrak{sl}(2)$ of $\mathfrak{sl}(2|1)$ is obtained by choosing the subset of positive roots  $\Psi^+=\{\alpha\}$. Correspondingly, we shall take
        \[
            R_{\mathfrak{sl}(2)(\alpha)}(t) \:=\: 1 - t_1\,,
        \]
        which, according to Conjecture~\ref{conj:superbranching}, tells us that the multiplicities can be calculated as the following
        \[
        \begin{aligned}
        \mu_{\lambda}^{\mathfrak{sl}(2)(\alpha)} &\:=\: \mathcal{D}_{R_{\mathfrak{sl}(2)(\alpha)}(t)} c_{s,L}(M_1,M_2) \\
        &\:=\: c_{s,L}(M_1,M_2) - c_{s,L}(M_1-1,M_2) \,,    
        \end{aligned}
        \]
        where $\lambda = (L-M_1, M_1 - M_2)$. More explicitly, below, we have listed the results for $L=6$ and $s=1/2$, in which case $c_{1/2,L}(M_1,M_2) = \binom{L}{M_1} \binom{M_1}{M_2}$:
        \begin{center}
        \renewcommand{\arraystretch}{1.2}
        \begin{longtable}{c | c | c} 
        \toprule
        $(M_1,M_2)$ & $\lambda = (L-M_1,M_1-M_2)$ & $c_{1/2,L}(M_1,M_2) - c_{1/2,L}(M_1-1,M_2)$\\ \hline
        $(0,0)$     &$(6)$          & $1$ \\ 
        $(1,0)$     &$(5,1)$        & $5$ \\
        $(1,1)$     &$(5)$          & $6$ \\
        $(2,0)$     &$(4,2)$        & $9$ \\
        $(2,1)$     &$(4,1)$        & $24$ \\
        $(2,2)$     &$(4)$        & $15$ \\
        $(3,0)$     &$(3,3)$        & $5$ \\
        $(3,1)$     &$(3,2)$        & $30$ \\
        $(3,2)$     &$(3,1)$        & $45$ \\
        $(3,3)$     &$(3)$        & $20$ \\
        $(4,2)$     &$(2,2)$        & $30$ \\
        $(4,3)$     &$(2,1)$        & $40$ \\
        $(4,4)$     &$(2)$        & $15$ \\
        $(5,4)$     &$(1,1)$        & $15$ \\
        $(5,5)$     &$(1)$        & $6$ \\
        $(6,6)$     &$(0)$        & $1$ \\
        \bottomrule
        \end{longtable}
        \end{center}
        
        The above results are consistent with the expansion coefficients in hook-Schur and Schur functions,
        \begin{equation}\label{eq:s21s12exp}
            \begin{aligned}
                &\left[ \HS_{(1)}(x_1,x_2;y)\right]^6\\ 
                &\quad \:=\: S_{(6)}(x_1,x_2) + 5 S_{(5,1)}(x_1,x_2)+6 y S_{(5)}(x_1,x_2) + 9 S_{(4,2)}(x_1,x_2)\\
                &\qquad + 24 y S_{(4,1)}(x_1,x_2) + 15 y^2 S_{(4)}(x_1,x_2) + 5 S_{(3,3)}(x_1,x_2) \\
                &\qquad + 30 y S_{(3,2)}(x_1,x_2) + 45 y^2 S_{(3,1)}(x_1,x_2) + 20 y^3 S_{(3)}(x_1,x_2)\\
                &\qquad + 30 y^2 S_{(2,2)}(x_1,x_2) + 40 y^3 S_{(2,1)}(x_1,x_2)+ 15 y^4 S_{(2)}(x_1,x_2) \\
                &\qquad + 15 y^4 S_{(1,1)}(x_1,x_2) + 6 y^5 S_{(1)}(x_1,x_2) + y^6\,.
            \end{aligned}
        \end{equation}

        \vspace{2mm}
        
        \paragraph{$\mathbf{\mathfrak{sl}(2|1) \supset \mathfrak{sl}(1|1)_1}$} As introduced earlier, the subalgebra $\mathfrak{sl}(1|1)_1$ is determined by taking $\Psi^+ = \{\delta\}$, and so 
        \[
            R_{\mathfrak{sl}(1|1)_1(\delta)}(t) \:=\: \frac{1}{1+t_2} = \sum_{\ell=0}^\infty (-1)^{\ell} t_2^{\ell}\,.
        \]
        The associated shifted operator leads to 
        \[
        \begin{aligned}
        \mu_{\lambda}^{\mathfrak{sl}(1|1)_1(\delta)} &\:=\: \mathcal{D}_{R_{\mathfrak{sl}(1|1)_1(\delta)}(t)} c_{s,L}(M_1,M_2) \\
        &\:=\: \sum_{\ell=0}^{\infty }(-1)^{\ell}c_{s,L}(M_1,M_2-\ell)\,,  
        \end{aligned}
        \]
        The results for the case of $L=6$ and $s=1/2$ are summarized in the table below, where $\lambda = (0)$ means the trivial representation.
        \begin{center}
        \renewcommand{\arraystretch}{1.2}
        \begin{longtable}{c | c | c}
        \toprule
        $(M_1,M_2)$ & $\lambda = (M_1-M_2,\overbrace{1,\dots,1}^{M_2})$ & $\sum_{\ell=0}^{\infty }(-1)^{\ell}c_{1/2,L}(M_1,M_2-\ell)$\\ \hline
        $(0,0)$     &$(0)$          & $1$ \\ 
        $(1,0)$     &$(1)$          & $6$ \\ 
        $(2,0)$     &$(2)$          & $15$ \\ 
        $(2,1)$     &$(1,1)$        & $15$ \\ 
        $(3,0)$     &$(3)$          & $20$ \\ 
        $(3,1)$     &$(2,1)$        & $40$ \\ 
        $(3,2)$     &$(1,1,1)$      & $20$ \\ 
        $(4,0)$     &$(4)$          & $15$ \\ 
        $(4,1)$     &$(3,1)$        & $45$ \\ 
        $(4,2)$     &$(2,1,1)$      & $45$ \\ 
        $(4,3)$     &$(1,1,1,1)$    & $15$ \\ 
        $(5,0)$     &$(5)$          & $6$ \\ 
        $(5,1)$     &$(4,1)$        & $24$ \\ 
        $(5,2)$     &$(3,1,1)$      & $36$ \\ 
        $(5,3)$     &$(2,1,1,1)$    & $24$ \\ 
        $(5,4)$     &$(1,1,1,1,1)$  & $6$ \\ 
        $(6,0)$     &$(6)$          & $1$ \\ 
        $(6,1)$     &$(5,1)$        & $5$ \\ 
        $(6,2)$     &$(4,1,1)$      & $10$ \\ 
        $(6,3)$     &$(3,1,1,1)$    & $10$ \\ 
        $(6,4)$     &$(2,1,1,1,1)$  & $5$ \\ 
        $(6,5)$     &$(1,1,1,1,1,1)$& $1$ \\ 
        \bottomrule
        \end{longtable}
        \end{center}
        
        One can directly check that the above results are consistent with the tensor power of hook-Schur functions and its decomposition as the following.
        \begin{equation}\label{eq:s21s111exp}
            \begin{aligned}
                &\left[\HS_{(1)}(x_1,x_2;y)\right]^6 \\
                &\quad \:=\: x_1^6 + 6 x_1^5 \HS_{(1)}(x_2;y) + 15 x_1^4 \HS_{(2)}(x_2;y) + 15 x_1^4 \HS_{(1,1)}(x_2;y)  \\
                &\qquad + 20 x_1^3 \HS_{(3)}(x_2;y) + 40 x_1^3 \HS_{(2,1)}(x_2;y) + 20 x_1^3 \HS_{(1,1,1)}(x_2;y) \\
                &\qquad + 15 x_1^2 \HS_{(4)}(x_2;y) + 45 x_1^2 \HS_{(3,1)}(x_2;y) + 45 x_1^2 \HS_{(2,1,1)}(x_2;y) \\
                &\qquad + 15 x_1^2 \HS_{(1,1,1,1)}(x_2;y) + 6 x_1 \HS_{(5)}(x_2;y) + 24 x_1 \HS_{(4,1)}(x_2;y) \\
                &\qquad + 36 x_1 \HS_{(3,1,1)}(x_2;y) + 24 x_1 \HS_{(2,1,1,1)}(x_2;y) + 6 x_1 \HS_{(1,1,1,1,1)}(x_2;y) \\
                &\qquad + \HS_{(6)}(x_2;y) + 5 \HS_{(5,1)}(x_2;y) + 10 \HS_{(4,1,1)}(x_2;y) \\
                &\qquad + 10 \HS_{(3,1,1,1)}(x_2;y) + 5\HS_{(2,1,1,1,1)}(x_2;y) + \HS_{(1,1,1,1,1,1)}(x_2;y)\,.
            \end{aligned}
        \end{equation}

        \vspace{2mm}
        
        \paragraph{$\mathbf{\mathfrak{sl}(2|1) \supset \mathfrak{sl}(1|1)_2}$} A similar study applies to $\mathfrak{sl}(1|1)_2$ with $\Psi^+ = \{\beta\}$ and
        \[
            R_{\mathfrak{sl}(1|1)_2(\beta)}(t) \:=\: \frac{1}{1+t_1 t_2} = \sum_{\ell=0}^{\infty}(-1)^\ell t_1^{\ell}t_2^{\ell}\,,
        \]
        Therefore, our conjectured formula gives
        \[
        \begin{aligned}
        \mu_{\lambda}^{\mathfrak{sl}(1|1)_2(\beta)} &\:=\: \mathcal{D}_{R_{\mathfrak{sl}(1|1)_2(\beta)}(t)} c_{s,L}(M_1,M_2) \\
        &\:=\: \sum_{\ell=0}^{\infty }(-1)^{\ell}c_{s,L}(M_1-\ell,M_2-\ell)\,,  
        \end{aligned}
        \]
        and we have also listed results in the special case with $L=6$ and $s=1/2$ in the table below, which again agree with results directly computed by using the hook-Schur functions as shown in \eqref{eq:s21s112exp}.
        \begin{center}
        \renewcommand{\arraystretch}{1.2}
        \begin{longtable}{c | c | c}
        \toprule
        $(M_1,M_2)$ & $\lambda = (L-M_1,\overbrace{1,\dots,1}^{M_2})$ & $\sum_{\ell=0}^{\infty }(-1)^{\ell}c_{s,L}(M_1-\ell,M_2-\ell)$\\ \hline
        $(0,0)$     &$(6)$          & $1$ \\ 
        $(1,0)$     &$(5)$          & $6$ \\ 
        $(1,1)$     &$(5,1)$        & $5$ \\ 
        $(2,0)$     &$(4)$          & $15$ \\ 
        $(2,1)$     &$(4,1)$        & $24$ \\ 
        $(2,2)$     &$(4,1,1)$      & $10$ \\ 
        $(3,0)$     &$(3)$          & $20$ \\ 
        $(3,1)$     &$(3,1)$        & $45$ \\ 
        $(3,2)$     &$(3,1,1)$      & $36$ \\ 
        $(3,3)$     &$(3,1,1,1)$    & $10$ \\ 
        $(4,0)$     &$(2)$          & $15$ \\ 
        $(4,1)$     &$(2,1)$        & $40$ \\ 
        $(4,2)$     &$(2,1,1)$      & $45$ \\ 
        $(4,3)$     &$(2,1,1,1)$    & $24$ \\ 
        $(4,4)$     &$(2,1,1,1,1)$  & $5$ \\ 
        $(5,0)$     &$(1)$          & $6$ \\ 
        $(5,1)$     &$(1,1)$        & $15$ \\ 
        $(5,2)$     &$(1,1,1)$      & $20$ \\ 
        $(5,3)$     &$(1,1,1,1)$    & $15$ \\ 
        $(5,4)$     &$(1,1,1,1,1)$  & $6$ \\ 
        $(5,5)$     &$(1,1,1,1,1,1)$& $1$ \\ 
        $(6,0)$     &$(0)$          & $1$ \\ 
        \bottomrule
        \end{longtable}
        \end{center}
        \begin{equation}\label{eq:s21s112exp}
            \begin{aligned}
                &\left[\HS_{(1)}(x_1,x_2;y)\right]^6 \\
                &\quad \:=\: \HS_{(6)}(x_1;y) + 6 x_2 \HS_{(5)}(x_1;y) + 5 \HS_{(5,1)}(x_1;y) + 15 x_2^2 \HS_{(4)}(x_1;y)\\
                &\qquad + 24 x_2 \HS_{(4,1)}(x_1;y) + 10\HS_{(4,1,1)}(x_1;y) + 20 x_2^3 \HS_{(3)}(x_1;y)\\
                &\qquad + 45 x_2^2 \HS_{(3,1)}(x_1;y) + 36 x_2 \HS_{(3,1,1)}(x_1;y) + 10 \HS_{(3,1,1,1)}(x_1;y) \\
                &\qquad +15 x_2^4 \HS_{(2)}(x_1;y) +40 x_2^3 \HS_{(2,1)}(x_1;y) + 45 x_2^2 \HS_{(2,1,1)}(x_1;y)\\
                &\qquad + 24 x_2 \HS_{(2,1,1,1)}(x_1;y) + 5 \HS_{(2,1,1,1,1)}(x_1;y)+ 6 x_2^5 \HS_{(1)}(x_1;y) \\
                &\qquad + 15 x_2^4 \HS_{(1,1)}(x_1;y) + 20 x_2^3 \HS_{(1,1,1)}(x_1;y) + 15 x_2^2 \HS_{(1,1,1,1)}(x_1;y) \\
                &\qquad + 6 x_2 \HS_{(1,1,1,1,1)}(x_1;y)+ \HS_{(1,1,1,1,1,1)}(x_1;y) + x_2^6\,.
            \end{aligned}
        \end{equation} 
    \begin{remark}
    Similar to the discussion in Section~\ref{sec:branch}, the powers of $y$ in the expansion of hook-Schur functions, \eqref{eq:s21s12exp}, should be understood as charges of the complementary $\mathfrak{gl}(1)$ to $\mathfrak{sl}(2)$ in $\mathfrak{sl}(2|1)$. Similarly,  the power of $x_1$ (resp.~$x_2$) in \eqref{eq:s21s111exp} (resp.~\eqref{eq:s21s112exp}) should also be understood as charges of the complementary $\mathfrak{gl}(1)$ to $\mathfrak{sl}(1|1)_1$ (resp.~$\mathfrak{sl}(1|1)_2$) in $\mathfrak{sl}(2|1)$.
    \end{remark}

    \subsubsection{$\mathfrak{sl}(m|1)\supset \mathfrak{sl}(m)$} We shall consider the $L$-th tensor power of $(2s)$-representations of $\mathfrak{sl}(m|1)$ for $s\in \mathbb{Z}+\frac{1}{2}$ and its decomposition over covariant tensor modules $V_{\lambda}$. Here, $\lambda$ are the hook-Young diagrams contained in the $(m,1)$-hook. The character of $(2s)$-representation can be expressed by the $(2s)$-hook-Schur function for $\mathfrak{sl}(m|1)$, which in this case can be explicitly written out as 
    \begin{equation}
        \HS_{(2s)}(x_1,\dots,x_m;y) \:=\: S_{(2s)}(x_1,\dots,x_m) + y S_{(2s-1)}(x_1,\dots,x_m)\,.
    \end{equation}
    We will use the shorthanded notation $x=(x_1,\dots,x_m)$. Then, consider
    \begin{equation}
        \left[ \HS_{(2s)}(x;y) \right]^L \:=\: \sum_{M_m=0}^L \binom{L}{M_m} y^{M_m} S_{2s}(x)^{L-M_m}(x)S_{2s-1}(x)^{M_m}\,,
    \end{equation}
    and one can realize that for each $M_m$ the product $S_{2s}(x)^{L-M_m}(x)S_{2s-1}(x)^{M_m}$ can be understood as the tensor product of $(L-M_m)$ spin-$s$ and $M_m$ spin-$(s-\frac{1}{2})$, which is just a special case of the tensor product of $L$ different spins discussed in Section~\ref{sec:diffspin}. For each $0\leq M_m \leq L$, let us denote the expansion coefficients by $c_{s,L,M_m}(M_1,\dots,M_{m-1})$ in 
    \begin{multline}
        S_{2s}(x)^{L-M_m}(x)S_{2s-1}(x)^{M_m} \:=\:\\
        \sum_{M_m\leq M_{m-1}\leq \cdots\leq M_1 \leq L} c_{s,L,M_m}(M_1,\dots,M_{m-1}) \, x_1^{2sL-M_1} x_2^{M_1-M_2}\cdots x_m^{M_{m-1}-M_m}\,.
    \end{multline}
    Putting together, we shall have
    \begin{multline}
        \left[ \HS_{(2s)}(x;y) \right]^L \\
        \:=\: \sum_{0\leq M_{m}\leq \cdots\leq M_1 \leq L}  c_{s,L}(\vec{M})\, x_1^{2sL-M_1} x_2^{M_1-M_2}\cdots x_m^{M_{m-1}-M_m} y^{M_m}\,,
    \end{multline}
    with 
    \[
        c_{s,L}(\vec{M}) \:=\: \binom{L}{M_m}c_{s,L,M_m}(M_1,\dots,M_{m-1})\,.
    \]
    
    We want to decompose this $L$-th tensor power of $(2s)$-representation of $\mathfrak{sl}(m|1)$ over the irreducible representations of the subalgebra $\mathfrak{sl}(m) \subset \mathfrak{sl}(m|1)$, namely
    \begin{equation}
        \left[ \HS_{(2s)}(x;y) \right]^L \:=\: \sum_{M_m=0}^L \sum_{\lambda} \mu_{\lambda}^{\mathfrak{sl}(m)} y^{M_m} S_{\lambda}(x)\,,
    \end{equation}
    where $\lambda$ are standard (redundant) Young diagrams of total $2sL-M_m$ boxes.
    
    To obtain such subalgebra $\mathfrak{sl}(m)$, we should choose the subset of positive roots $\Phi^+$ consisting of only $L_i$'s,
    \[
        \Psi^+ \:=\: \langle L_1 - L_2, L_2 - L_3, \dots, L_{m-1} - L_{m} \rangle\,.
    \]
    Then, $R_{sl(m)(\Psi^+)}(t)$ is equal to $R_{sl(m)}(t)$ associated to the ordinary $\mathfrak{sl}(m)$,
    \[
        R_{sl(m)(\Psi^+)}(t) \:=\: \prod_{\alpha\in\Psi^+}(1+t^{\alpha})\,.
    \]
    
    We claim that $\mu_{\lambda}^{\mathfrak{sl}(m)} $ can be obtained by the following formula, which is a specialized case of Conjecture~\ref{conj:superbranching}:
    \begin{prop}
        \begin{equation}
            \mu_{\lambda}^{\mathfrak{sl}(m)} \:=\: \mathcal{D}_{R_{sl(m)(\Psi^+)}(t) } c_{s,L}(\vec{M})\,.
        \end{equation}
    \end{prop}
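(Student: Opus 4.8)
The plan is to reduce this statement to the tensor-product-of-different-spins result, Corollary~\ref{cor:diffspins}, by treating the single odd variable $y$ as a passive charge for the complementary $\mathfrak{gl}(1)$. First I would start from the expansion
\[
    \left[ \HS_{(2s)}(x;y) \right]^L \:=\: \sum_{M_m=0}^L \binom{L}{M_m}\, y^{M_m}\, S_{(2s)}(x)^{L-M_m} S_{(2s-1)}(x)^{M_m}\,,
\]
and observe that the subset $\Psi^+ = \langle L_1 - L_2, \dots, L_{m-1}-L_m\rangle$ defining $\mathfrak{sl}(m)$ involves only the even roots, so that $R_{sl(m)(\Psi^+)}(t) = R_{A_{m-1}}(t)$ is a function of $t_1,\dots,t_{m-1}$ alone and does not contain the odd variable $t_m = e^{-\delta}$. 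Consequently the associated shift operator $\mathcal{D}_{R_{sl(m)(\Psi^+)}(t)}$ shifts only $M_1,\dots,M_{m-1}$ while leaving $M_m$ untouched.

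Next, for each fixed $M_m$ the factor $S_{(2s)}(x)^{L-M_m} S_{(2s-1)}(x)^{M_m}$ is precisely the character of the tensor product of $(L-M_m)$ copies of the spin-$s$ representation with $M_m$ copies of the spin-$(s-\tfrac12)$ representation of $\mathfrak{sl}(m) = A_{m-1}$. This is exactly the setting of Section~\ref{sec:diffspin}, so Corollary~\ref{cor:diffspins} applies with the monomial expansion coefficients $c_{s,L,M_m}(M_1,\dots,M_{m-1})$ playing the role of $c_{\vec{s}}(\vec{M})$, and yields, for the multiplicity $\nu_\lambda^{(M_m)}$ of $S_\lambda(x)$ in this product,
\[
    \nu_\lambda^{(M_m)} \:=\: \mathcal{D}_{R_{A_{m-1}}(t)}\, c_{s,L,M_m}(M_1,\dots,M_{m-1})\,,
\]
where $\lambda$ and $(M_1,\dots,M_{m-1})$ are related by \eqref{eq:map}.

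Finally, comparing coefficients of $y^{M_m} S_\lambda(x)$ on both sides of the target expansion yields $\mu_\lambda^{\mathfrak{sl}(m)} = \binom{L}{M_m}\, \nu_\lambda^{(M_m)}$. Since the shift operator $\mathcal{D}_{R_{A_{m-1}}(t)}$ acts trivially on the $M_m$-dependent factor $\binom{L}{M_m}$, I would pull this factor inside the operator and invoke $c_{s,L}(\vec{M}) = \binom{L}{M_m}\, c_{s,L,M_m}(M_1,\dots,M_{m-1})$ to conclude
\[
    \mu_\lambda^{\mathfrak{sl}(m)} \:=\: \mathcal{D}_{R_{A_{m-1}}(t)}\!\left[\binom{L}{M_m}\, c_{s,L,M_m}(M_1,\dots,M_{m-1})\right] \:=\: \mathcal{D}_{R_{sl(m)(\Psi^+)}(t)}\, c_{s,L}(\vec{M})\,.
\]
I do not expect a genuine obstacle, as the whole argument is a bookkeeping reduction to Corollary~\ref{cor:diffspins}. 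The only points requiring care are verifying that the odd variable $t_m$ genuinely drops out of $R_{sl(m)(\Psi^+)}(t)$ so that the binomial factor commutes past the shift operator, and checking that the dictionary \eqref{eq:map} is compatible between the full $\mathfrak{sl}(m|1)$ labeling and the reduced $\mathfrak{sl}(m)$ labeling on the relevant range of $\vec{M}$.
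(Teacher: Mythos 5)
Your proposal is correct and follows essentially the same route as the paper: factor $c_{s,L}(\vec{M}) = \binom{L}{M_m}\, c_{s,L,M_m}(M_1,\dots,M_{m-1})$, observe that $R_{sl(m)(\Psi^+)}(t)$ involves only $t_1,\dots,t_{m-1}$ so the shift operator leaves $M_m$ (hence the binomial factor) untouched, and apply Corollary~\ref{cor:diffspins} to the fixed-$M_m$ product $S_{(2s)}(x)^{L-M_m}S_{(2s-1)}(x)^{M_m}$ viewed as a tensor product of different spins. Your version merely spells out the coefficient comparison in $y^{M_m}S_\lambda(x)$ that the paper leaves implicit.
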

    \begin{proof}
        Note that $c_{s,L}(\vec{M}) = \binom{L}{M_m}c_{s,L,M_m}(M_1,\dots,M_{m-1})$ and the shift operator $\mathcal{D}_{R_{sl(m)(\Psi^+)}(t) }$ acts only on $c_{s,L,M_m}(M_1,\dots,M_{m-1})$. Then, for a given $M_m$, one can directly apply Corollary~\ref{cor:diffspins} and get the result.
    \end{proof}
    
\vspace{8mm}

\appendix

\section{Nested restricted occupancy problem}
\label{app:nrop}
Let us first recall the nested restricted occupancy problem discussed in \cite{Shu:2024crv}. Let $L \ge M_1 \ge  \ldots \ge M_r \ge 0$ be a sequence of non-increasing integers. The restricted occupancy coefficient $c_{s,L}(\vec M)$ counts the ways of assigning $M_a$ boxes into $L$ cells $\{n^{(a)}_{1}, \ldots, n^{(a)}_{L}\}$, such that $\sum_{\alpha=1}^L n_\alpha^{(a)} = M_a$ and $n_\alpha^{(a)} \le n_\alpha^{(a-1)}$, $n_\alpha^{(1)}\leq s$. This is illustrated by the left graph in Figure~\ref{fig:3d}, and this reduces to the restricted occupancy problem considered in \cite{Freund:1956ro} when $r=1$.
\begin{figure}[!h]
\begin{minipage}{0.48\textwidth}
            \centering
            \includegraphics[width=0.8\linewidth]{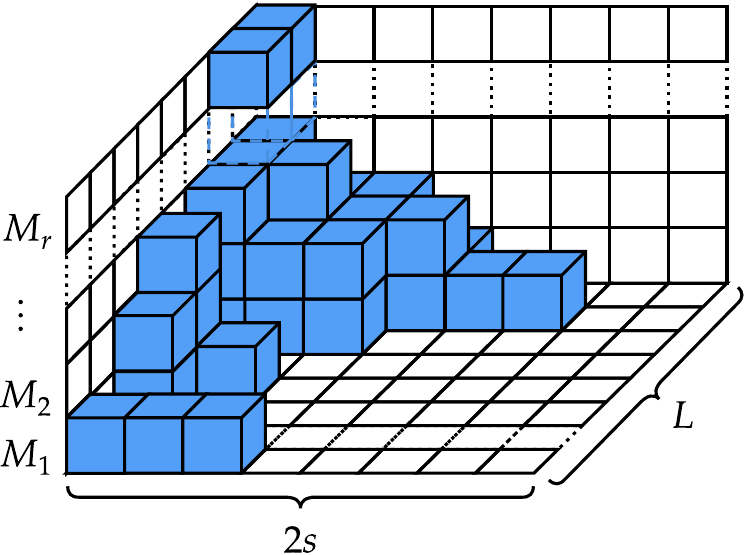}
        \end{minipage}\hfill
        \begin{minipage}{0.48\textwidth}
            \centering
            \includegraphics[width=0.8\linewidth]{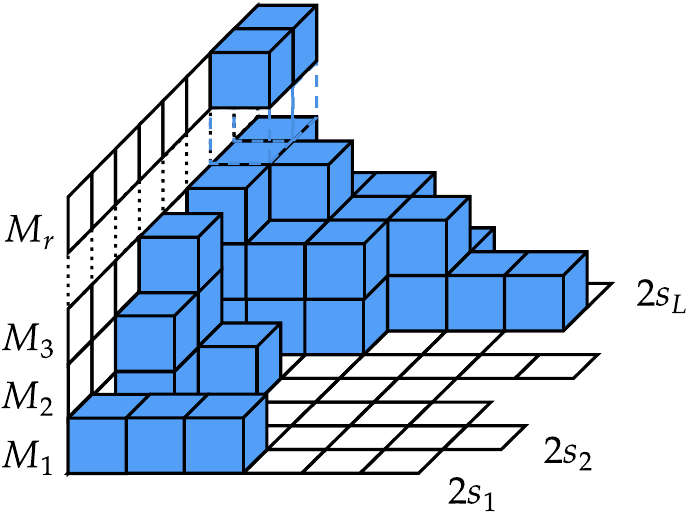}
        \end{minipage}
        \caption{A 3d restricted occupancy problem. On each level we have a 2d restricted occupancy with $n_\alpha^{(a)}$ boxes in each row and a total of $M_a$ boxes. Each row on a higher level has no more boxes than the lower level.}
        \label{fig:3d}
\end{figure}

A slight generalization of this problem is illustrated by the right graph in Figure~\ref{fig:3d}, with a modification of the constraint $n_\alpha^{(1)}\leq s$ to $n_\alpha^{(1)}\leq s_{\alpha}$ for $\alpha =1,\dots, L$.

\section{Identities for $c_{s,L}(\vec{M})$}
\label{app:identity}
The coefficients of $c_{s,L}(\vec{M})$ are defined as the following
\begin{multline}
    \left[S_{(2s)}(x_1,x_2,\dots,x_{r+1})\right]^L \\
    \:=\: \sum_{0\leq M_r \leq \dots \leq M_1 \leq 2sL } c_{s,L}(M_1,\dots,M_r) x_1^{2sL-M_1} x_2^{M_1-M_2} \cdots x_{r}^{M_{r-1}-M_r} x_{r+1}^{M_r} \,.
\end{multline}
Due to the identity of Schur functions, for any permutation $\sigma\in S_{r+1}$,
\begin{equation}
    S_{(2s)}(x_1,x_2,\dots,x_{r+1}) \:=\: S_{(2s)}(x_{\sigma(1)},x_{\sigma(2)},\dots,x_{\sigma(r+1)})\,,
\end{equation}
then we shall have 
\begin{equation}
\label{eq:identity1}
    \begin{aligned}
    &\left[S_{(2s)}(x_1,x_2,\dots,x_{r+1})\right]^L\\
    &\:=\: \sum_{0\leq M_r \leq \dots \leq M_1 \leq 2sL } c_{s,L}(M_1,\dots,M_r) x_1^{2sL-M_1} x_2^{M_1-M_2} \cdots x_{r}^{M_{r-1}-M_r} x_{r+1}^{M_r}\\
    &\:=\: \sum_{0\leq M^\prime_r \leq \dots \leq M^\prime_1 \leq 2sL } c_{s,L}(M^\prime_1,\dots,M^\prime_r) x_{\sigma(1)}^{2sL-M^\prime_1} x_{\sigma(2)}^{M^\prime_1-M^\prime_2} \cdots x_{\sigma(r)}^{M^\prime_{r-1}-M^\prime_r} x_{\sigma(r+1)}^{M^\prime_r}\,.
    \end{aligned}
\end{equation}
One can extract the identities for $c_{s,L}(M_1,\dots,M_r)$'s according to the above equation. 

For example, take 
\[
    \sigma(1,2,3,\dots,r,r+1) \:= \:(2,1,3,\dots,r,r+1)\,,
\]
namely only $1$ and $2$ exchanged, then we shall have
\begin{equation}
    \begin{aligned}
    &\sum_{0\leq M_r \leq \dots \leq M_1 \leq 2sL } c_{s,L}(M_1,\dots,M_r) x_1^{2sL-M_1} x_2^{M_1-M_2}x_3^{M_2-M_3} \cdots x_{r}^{M_{r-1}-M_r} x_{r+1}^{M_r}\\
    &\:=\: \sum_{0\leq M^\prime_r \leq \dots \leq M^\prime_1 \leq 2sL } c_{s,L}(M^\prime_1,\dots,M^\prime_r) x_{2}^{2sL-M^\prime_1} x_{1}^{M^\prime_1-M^\prime_2}x_3^{M^\prime_2-M^\prime_3} \cdots x_{r}^{M^\prime_{r-1}-M^\prime_r} x_{r+1}^{M^\prime_r}\,.
    \end{aligned}
\end{equation}
Now, if take $M_1^\prime = 2sL+M_2-M_1$ and $M_i^\prime = M_i$ for all $i\geq 2$, one can conclude that
\begin{equation}
    \begin{aligned}
    &\sum_{0\leq M_r \leq \dots \leq M_1 \leq 2sL } c_{s,L}(M_1,\dots,M_r) x_1^{2sL-M_1} x_2^{M_1-M_2}x_3^{M_2-M_3} \cdots x_{r}^{M_{r-1}-M_r} x_{r+1}^{M_r}\\
    &\:=\: \sum_{0\leq M_r \leq \dots \leq M_1 \leq 2sL } c_{s,L}(2sL+M_2-M_1,M_2,\dots,M_r) x_{2}^{M_1-M_2} x_{1}^{2sL-M_1}x_3^{M_2-M_3} \cdots x_{r}^{M_{r-1}-M_r} x_{r+1}^{M_r}\,,
    \end{aligned}
\end{equation}
where $M_2\leq M_1\leq 2sL$ is equivalent to $M_2\leq 2sL+M_2-M_1\leq 2sL$, the above equation leads to
\[
     c_{s,L}(M_1,\dots,M_r) \:=\: c_{s,L}(2sL+M_2-M_1,M_2,\dots,M_r)\,.
\]
Similarly, for an adjacent transposition 
$$\sigma: (1,\dots,i,i+1,\dots,r+1) \mapsto (1,\dots,i+1,i,\dots,r+1)$$ 
with $2\leq i\leq r$, namely, only exchanging $i$ and $i+1$, one can follow the same procedure and obtain that
\begin{equation}
\label{eq:transport}
    c_{s,L}(M_1,\dots,M_i,\dots,M_r) \:=\: c_{s,L}(M_1,\dots,M_{i-1}+M_{i+1}-M_i,\dots,M_r)\,,
\end{equation}
where we have used the convention that $M_{r+1}=0$ for simplicity of notation. 

In fact, one can obtain $|S_{r+1}| = (r+1)!$ identities in total for $c_{s,L}(M_1,\dots,M_r)$. For later convenience, given a permutation $\sigma \in S_{r+1}$, we denote the identity obtained following the previous procedure as 
\begin{equation}
\label{eq:identity2}
    c_{s,L}(\vec{M}) \:=\: c_{s,L}(\sigma(\vec{M}))\, ,
\end{equation}
where we have used notation $\vec{M}= (M_1,\dots,M_r)$ and $\sigma(\vec{M})$ denotes parameters after $\sigma$-permutation extract from \eqref{eq:identity1}. 

\begin{remark}
    As demonstrated in \cite{Shu:2022vpk}, the coefficient $c_{s,L}(\vec{M})$ gives the Witten index of the dual quiver gauge theory via the Bethe/gauge correspondence. Then, these nontrivial identities among $c_{s,L}(\vec{M})$ provide evidence for the Seiberg-like duality in the gauge theory. In particular, if one chooses $\sigma$ in \eqref{eq:identity2} as a transposition, one can easily check that the identity associated with this $\sigma$ coincides with the quiver mutation in the cluster algebra.
\end{remark}

Identities \eqref{eq:identity2} are consistent with Lemma~\ref{lem:equal}. For the identity \eqref{eq:transport} obtained from an adjacent transposition, it is equivalent to the exchange of $\lambda_i$ and $\lambda_{i+1}$ using \eqref{eq:map}, consistent with symmetric property in the expansion \eqref{eq:monexpan}.

In the general cases of tensor product of different spins, starting with the following expansion,
\begin{multline}
    \prod_{i=1}^{L}\left[S_{(2s_i)}(x_1,x_2,\dots,x_{r+1})\right] \\
    \:=\: \sum_{0\leq M_r \leq \dots \leq M_1 \leq 2|\vec{s}| } c_{\vec{s}}(M_1,\dots,M_r) x_1^{2sL-M_1} x_2^{M_1-M_2} \cdots x_{r}^{M_{r-1}-M_r} x_{r+1}^{M_r} \,,
\end{multline}
and following the same procedures as before, one can also extract the identities for $c_{\vec{s}}(\vec{M})$ due to the symmetric property of $\prod_{i=1}^{L}\left[S_{(2s_i)}(x_1,x_2,\dots,x_{r+1})\right]$.

\section{A brute force derivation for $A_1$ case}
\label{app:a1}
The Weyl character formula for spin-$s$ representation of $\mathfrak{sl}(2)$ gives
\begin{equation}
    \chi_{(2s)}(x) \:=\: \frac{x_1^{2s+1}-x_2^{2s+1}}{x_1-x_2}\, ,
\end{equation}
which is a Schur polynomial of $x := (x_1, x_2)$ determined by the Young diagram $\lambda = (2s)$. It is a degree-$2s$ complete symmetric polynomial with integer coefficients, and therefore its $L$-th power is a homogeneous symmetric polynomial with integer coefficients but of degree $2sL$,
\begin{equation} \label{eq:a1int}
    \left[\chi_{(2s)}(x)\right]^L \:=\: \sum_{M=0}^{2sL} c_{s,L}(M) x_1^{2sL - M} x_2^{M} \,,
\end{equation}
The Weyl character $\chi_{(2s)}(x)$ is invariant under the exchange $x_1 \leftrightarrow x_2$, so does its $L$-th power and one can obtain the following identity, \footnote{Please see appendix~\ref{app:identity} for a derivation for general cases.}
\begin{equation} \label{eq:a1dual}
    c_{s,L}(M) \:=\: c_{s,L}(2sL-M)\,.
\end{equation}

On the other hand, the decomposition of $L$-th tensor power of spin-$s$ representation into a direct sum of irreducible representations can be expressed by equation~\eqref{eq:weyldenom} with $\lambda = (\lambda_1, \lambda_2)$ Young diagrams labelling these irreducible representations and $\chi_{\lambda}(x)$'s in the summation are, written in terms of Schur polynomials,
\begin{equation*}
    \chi_{\lambda}(x) \:=\: \frac{x_1^{\lambda_1+1} x_2^{\lambda_2} - x_1^{\lambda_2} x_2^{\lambda_1+1} }{x_1 - x_2}\, .
\end{equation*}
The main theorem \ref{thm:main} for this case states that 
\begin{equation*}
    \mu_{(2sL-M,M) } \:=\: c_{s,L}(M) - c_{s,L}(M-1)\,,
\end{equation*}
which in this simple case can be directly derived as below.

First, rewrite the RHS of equation~\eqref{eq:a1int} as
\begin{equation*}
    \begin{aligned}
        &\sum_{0\leq M \leq 2sL } c_{s,L}(M) x_1^{2sL-M} x_2^M \\
        &\:=\: \frac{1}{x_1 - x_2} \left( \sum_{0\leq M \leq 2sL } c_{s,L}(M) x_1^{2sL-M+1} x_2^M - \sum_{0\leq M \leq 2sL } c_{s,L}(M) x_1^{2sL-M} x_2^{M+1} \right) \\
        &\:=\: \frac{1}{x_1 - x_2} \left( \sum_{0\leq M \leq 2sL } c_{s,L}(M) x_1^{2sL-M+1} x_2^M - \sum_{1\leq M \leq 2sL+1 } c_{s,L}(M-1) x_1^{2sL-M+1} x_2^{M} \right) \\
        &\:=\: \frac{1}{x_1 - x_2} \left( \sum_{0\leq M \leq sL } c_{s,L}(M) x_1^{2sL-M+1} x_2^M + \sum_{sL+1\leq M \leq 2sL } c_{s,L}(M) x_1^{2sL-M+1} x_2^M \right. \\ & \quad \left. - \sum_{1\leq M \leq sL } c_{s,L}(M-1) x_1^{2sL-M+1} x_2^{M} - \sum_{sL+1\leq M \leq 2sL+1 } c_{s,L}(M-1) x_1^{2sL-M+1} x_2^{M} \right)\, .
    \end{aligned}
\end{equation*}
Due to the identity \eqref{eq:a1dual}, then we shall have
\begin{equation*}
    \begin{aligned}
        \sum_{sL+1\leq M \leq 2sL } c_{s,L}(M) x_1^{2sL-M+1} x_2^M &= \sum_{sL+1\leq M \leq 2sL } c_{s,L}(2sL-M) x_1^{2sL-M+1} x_2^M \\ 
        &\:=\: \sum_{0\leq M \leq sL-1 } c_{s,L}(M) x_1^{M+1} x_2^{2sL-M} \\
        &\:=\: \sum_{1\leq M \leq sL } c_{s,L}(M-1) x_1^{M} x_2^{2sL-M+1}\,,
    \end{aligned}
\end{equation*}
and, similarly, one can obtain
\begin{equation*}
    \sum_{sL+1\leq M \leq 2sL+1 } c_{s,L}(M-1) x_1^{2sL-M+1} x_2^{M} \:=\: \sum_{0\leq M \leq sL } c_{s,L}(M) x_1^{M} x_2^{2sL-M+1}\,.
\end{equation*}
Therefore,
\begin{equation*}
    \begin{aligned}
        \sum_{0\leq M \leq 2sL } c_{s,L}(M) x_1^{2sL-M} x_2^M &\:=\: \frac{1}{x_1 - x_2} \left( \sum_{0\leq M \leq sL } c_{s,L}(M) \left(x_1^{2sL-M+1} x_2^M - x_1^{M} x_2^{2sL-M+1}  \right) \right. \\ 
        & \quad \left. - \sum_{1\leq M \leq sL } c_{s,L}(M-1) \left(x_1^{2sL-M+1} x_2^{M} - x_1^{M} x_2^{2sL-M+1}\right) \right) \\
        & \:=\: \sum_{0\leq M\leq sL} \left(c_{s,L}(M)-c_{s,L}(M-1)\right) \chi_{(2sL-M,M)}\,.
    \end{aligned}
\end{equation*}
Compared with equation~\eqref{eq:weyldenom} by identifying $\lambda = (2sL-M,M)$, one can conclude that $\mu_{(2sL-M,M)} = c_{s,L}(M)-c_{s,L}(M-1)$, and this completes the derivation of the Equation~\eqref{eq:a1thm}.

\bibliographystyle{alpha}
\bibliography{ref}

@article{Shu:2024crv,
    author = "Shu, Hongfei and Zhao, Peng and Zhu, Rui-Dong and Zou, Hao",
    title = "{Counting Bethe states in twisted spin chains}",
    eprint = "2409.20338",
    archivePrefix = "arXiv",
    primaryClass = "math-ph",
    doi = "10.1007/JHEP03(2025)087",
    journal = "JHEP",
    volume = "03",
    pages = "087",
    year = "2025"
}

@book{fulton1991representation,
	author = {Fulton, W. and Harris, J.},
	isbn = {9780387974958},
	lccn = {96165313},
	publisher = {Springer New York},
	series = {Graduate Texts in Mathematics},
	title = {Representation Theory: A First Course},
	url = {https://books.google.co.jp/books?id=6GUH8ARxhp8C},
	year = {1991},
	bdsk-url-1 = {https://books.google.co.jp/books?id=6GUH8ARxhp8C}}

@article{Shu:2022vpk,
	archiveprefix = {arXiv},
	author = {Shu, Hongfei and Zhao, Peng and Zhu, Rui-Dong and Zou, Hao},
	doi = {10.21468/SciPostPhys.15.3.103},
	eprint = {2210.07116},
	journal = {SciPost Phys.},
	number = {3},
	pages = {103},
	primaryclass = {hep-th},
	title = {{Bethe-state counting and Witten index}},
	volume = {15},
	year = {2023},
	bdsk-url-1 = {https://doi.org/10.21468/SciPostPhys.15.3.103}}

@article{Freund:1956ro,
	author = {John E. Freund and Arthur N. Pozner},
	doi = {10.1214/aoms/1177728278},
	journal = {The Annals of Mathematical Statistics},
	number = {2},
	pages = {537 -- 540},
	publisher = {Institute of Mathematical Statistics},
	title = {{Some Results on Restricted Occupancy Theory}},
	url = {https://doi.org/10.1214/aoms/1177728278},
	volume = {27},
	year = {1956},
	bdsk-url-1 = {https://doi.org/10.1214/aoms/1177728278}}

@book{frappat:hal-00376660,
  TITLE = {{Dictionary on Lie algebras and superalgebras}},
  AUTHOR = {Frappat, Luc and Sorba, Paul and Sciarrino, Antonino},
  URL = {https://hal.science/hal-00376660},
  PUBLISHER = {{Academic Press (London)}},
  PAGES = {410},
  YEAR = {2000},
  HAL_ID = {hal-00376660},
  HAL_VERSION = {v1},
}

@book{DiFrancesco:1997nk,
    author = "Di Francesco, P. and Mathieu, P. and Senechal, D.",
    title = "{Conformal Field Theory}",
    doi = "10.1007/978-1-4612-2256-9",
    isbn = "978-0-387-94785-3, 978-1-4612-7475-9",
    publisher = "Springer-Verlag",
    address = "New York",
    series = "Graduate Texts in Contemporary Physics",
    year = "1997"
}

@article{VanderJeugt:1989ak,
    author = "Van der Jeugt, J. and Hughes, J. W. B. and King, R. C. and Thierry-Mieg, J.",
    title = "{Character Formulae for Irreducible Modules of the Lie Superalgebras Sl(m/$N$)}",
    reportNumber = "PRINT-89-0885 (SOUTHAMPTON)",
    doi = "10.1063/1.528637",
    journal = "J. Math. Phys.",
    volume = "31",
    pages = "2278--2304",
    year = "1990"
}

@article{Berele:1987yi,
    author = "Berele, A. and Regev, A.",
    title = "{Hook Young-Diagrams With Applications To Combinatorics And To Representations Of Lie-Superalgebras}",
    doi = "10.1016/0001-8708(87)90007-7",
    journal = "Adv. Math.",
    volume = "64",
    pages = "118--175",
    year = "1987"
}

\end{document}